\documentclass[final,onefignum,onetabnum]{siamart220329}
\usepackage{braket,amsfonts}

\usepackage{amsmath}
\usepackage{amssymb}
\usepackage{graphicx}
\usepackage[caption=false]{subfig}
\usepackage{bm}
\usepackage{epstopdf}
\usepackage{xcolor}
\usepackage{tikz}
\usepackage{multirow}
\newsiamremark{remark}{Remark}
\newsiamremark{assumption}{Assumption}

\usepackage[most]{tcolorbox}

\begin{tcbverbatimwrite}{tmp_\jobname_header.tex}
\title{A COUPLING METHOD OF MIXED AND LAGRANGE FINITE ELEMENTS FOR LINEAR ELASTICITY PROBLEM
\thanks{
Submitted to the editors xx xx, xxxx. 
\funding{The first author was  supported by the China Postdoctoral Science Foundation, Grants No. 2025M783065. The second author was supported by the National Natural Science Foundation of China, Grants No. NSFC 12288101. The third author was supported by the National Natural Science Foundation of China, Grants No. NSFC 12301523. The last author was  supported by the China Postdoctoral Science Foundation, Grants No. 2025M773114}}
}

\author{Wei Chen\thanks{LMAM and School of Mathematical Sciences, Peking University, Beijing 100871, P. R. China~\&~Chongqing Research Institute of Big Data, Peking University, Chongqing 401329, P. R. China (\email{2406397052@pku.edu.cn},\email{myzhang@csrc.ac.cn}).}
\and Jun Hu\thanks{LMAM and School of Mathematical Sciences, Peking University, Beijing 100871, P. R. China (\email{hujun@math.pku.edu.cn}).}
\and Limin Ma\thanks{School of Mathematics and Statistics, Wuhan University, Wuhan 430072, Hubei, P. R. China~\&~National Center for Applied Mathematics in Hubei, Wuhan 430072, Hubei, P. R. China (\email{limin18@whu.edu.cn}).}
\and Mingyan Zhang\footnotemark[2]
}

\headers{A COUPLING METHOD OF MIXED AND LAGRANGE ELEMENTS}{Wei Chen, Jun Hu, Limin Ma, and Minyan Zhang}
\end{tcbverbatimwrite}
\input{tmp_\jobname_header.tex}

\ifpdf
\hypersetup{pdftitle={Guide to Using  SIAM'S \LaTeX\ Style} }
\fi

\newcommand{\Omo}{{\Omega^+_h}}
\newcommand{\Omt}{{\Omega^-_h}}
\newcommand{\Omi}{{\Omega^\pm_h}}
\newcommand{\bR}{\mathbb{R}}
\newcommand{\Rn}{\mathbb{R}^n}
\newcommand{\Rnn}{\mathbb{R}^{n \times n}}
\newcommand{\Snn}{\mathbb{S}}
\newcommand{\Div}{\operatorname{div}}
\newcommand{\Tr}{\operatorname{tr}\,}
\newcommand{\Al}{\mathcal{A}}
\newcommand{\Gai}{\Gamma^\pm_h}
\newcommand{\Gao}{\Gamma^+_h}

\newcommand{\Ga}{{\Gamma_h}}
\newcommand{\FhGa}{\mathcal{F}_{\Gamma,h}}
\newcommand{\ToGa}{\mathcal{T}_{\Gamma,h}^+}
\newcommand{\TtGa}{\mathcal{T}_{\Gamma,h}^-}

\newcommand{\uo}{u^+}
\newcommand{\vo}{v^+}
\newcommand{\vt}{v^-}
\newcommand{\ut}{u^-}
\newcommand{\uoh}{u^+_h}
\newcommand{\voh}{v^+_h}
\newcommand{\vth}{v^-_h}
\newcommand{\uth}{u^-_h}
\newcommand{\uthx}{u^*_h}
\newcommand{\Vo}{V^+}
\newcommand{\sigo}{\sigma^+}
\newcommand{\sigt}{\sigma^-}

\newcommand{\sigth}{\sigma^-_h}

\newcommand{\no}{n^+}
\newcommand{\nt}{n^-}
\newcommand{\taut}{\tau^-}
\newcommand{\tauth}{\tau^-_h}
\newcommand{\Sigt}{\Sigma^-}
\newcommand{\Vt}{V^-}
\newcommand{\Zh}{Z_h}

\newcommand{\svt}{\,\!|\!|\!|}
\newcommand{\To}{\mathcal{T}_h^+}
\newcommand{\Tt}{\mathcal{T}_h^-}
\newcommand{\Ko}{K^+}

\newcommand{\Ft}{\mathcal{F}_h^-}

\newcommand{\Voh}{V_{h}^+}
\newcommand{\Vth}{V_{h}^-}

\newcommand{\Vthx}{V_{h}^*}
\newcommand{\Sigth}{\Sigma_{h}^-}

\newcommand{\bP}{\mathbb{P}}
\newcommand{\Pits}{\Pi_{h}^-}
\newcommand{\Pth}{Q_h^-}
\newcommand{\Ioh}{I_h^+}

\newcommand{\Oms}{\Omega_S}
\newcommand{\Omr}{\Omega_R}

\newcommand{\bx}{\mbox{x}}

\newcommand{\HZ}{\mathrm{HZ}}
\newcommand{\CG}{\mathrm{L}}
\begin{document}
\maketitle

%% ------------------------------------------------------------------
%% ABSTRACT
%% ------------------------------------------------------------------
\begin{tcbverbatimwrite}{tmp_\jobname_abstract.tex}
\begin{abstract}
This paper proposes a finite element method that couples mixed and Lagrange finite elements to efficiently capture stress concentrations in elasticity problems. The method employs conforming mixed finite elements in regions with stress concentration, while standard Lagrange elements are used elsewhere, achieving a balance between stress accuracy and computational efficiency.
The well-posedness of the coupled formulation and optimal a priori error estimates are established, even when the size of the mixed finite element subregion is $O(h)$. Numerical experiments are presented to verify the theoretical convergence rates and to demonstrate the effectiveness and efficiency of the proposed method.
\end{abstract}

\begin{keywords}
{linear elasticity problems, mixed finite elements, coupling methods, high-precision stress.}
\end{keywords}

\begin{MSCcodes}
% 65N12, 65N30, 74S05
\end{MSCcodes}
\end{tcbverbatimwrite}
\input{tmp_\jobname_abstract.tex}

\section{Introduction}
\label{intro}
Consider the linear elasticity equations with homogeneous boundary condition
\begin{equation}
\label{intro::linearEquation}
\begin{cases}
    \Al\sigma-\varepsilon(u)=0&\mbox{ in }\Omega,\\
    -\Div\sigma=f&\mbox{ in }\Omega,\\
%\sigma=2\mu\varepsilon(u)+\lambda\Tr\varepsilon(u)&\mbox{ in }\Omega,\\
    %A\sigma = \varepsilon(u),&\mbox{ in }\Omega,\\
    u=0&\mbox{ on }\partial\Omega,
\end{cases}    
\end{equation}
on a bounded, polygonal domain $\Omega \subset \Rn (n=2,3)$ with Lipschitz boundary~$\partial\Omega$, where~$u$ denotes the displacement, $\varepsilon(u)=\frac{1}{2}(\nabla u + \nabla u^T)$ the strain tensor, $\sigma$ the stress tensor, $f\in L^2(\Omega;\Rn)$ the body force, and~$\Snn\subset\Rnn$ the space of symmetric tensors. The compliance tensor~$\Al:\Snn\to\Snn$, characterizing the properties of the elastic material, is bounded, symmetric positive definite, i.e., there exist two positive constant $\alpha$ and $\beta$, such that
\begin{equation*}%\label{alpha-beta}
 \alpha \|\tau\|^2\leq (\Al\tau,\tau)\leq \beta\|\tau\|^2,~\text{ for all }\,\tau\in\Snn.   
\end{equation*}
For homogeneous and isotropic materials, the compliance tensor is given by
\begin{equation*}%\label{material}
\Al\tau = \frac{1}{2\mu}\left(\tau-\frac{\lambda \Tr\tau}{2\mu+n\lambda}I\right),
\end{equation*}
with the Lam\'e constants $\mu>0$ and $\lambda\geq0$, where $I$ is the identity matrix in~$\Rnn$.
% {\color{blue}
% In continuum mechanics, material properties are commonly characterized by Young’s modulus $E$ and Poisson's ratio $\nu$. They relate to the Lam\'e constants through the constitutive equations \cite[(1.2.44)]{brezzi2012mixed}:
% $$
% \mu=\frac{E}{2(1+\nu)},\quad \lambda=\frac{E\nu}{(1+\nu)(1-2\nu)}.
% $$}
In the case of plane strain in two-dimensional space and in three-dimensional space, $\Al\tau$ can be rewritten as~$\Al\tau=\frac{(1+\nu)}{E}\big(\tau-\frac{\nu\Tr\tau}{1+(n-2)\nu}I\big)$, where $E$ is Young’s modulus and $\nu$ is Poisson's ratio.
The linear elasticity model plays a significant role in solid mechanics \cite{achenbach2012wave,MR1392473,landau2012theory}. Among various weak formulations  for linear elasticity, the primal formulation and the mixed formulation are most commonly used in literature and application~\cite{MR3097958,MR1115205,MR520174,ciarlet2021mathematical}.%{\color{red}~\cite{MR3097958,brezzi2012mixed,ciarlet2021mathematical}}.

Finite element methods based on the primal formulation of linear elasticity approximate the displacement, with stresses recovered by differentiation of the discrete displacement. This typically leads to reduced accuracy in stress approximation~\cite{Brenner1992LinearFE,courant1994variational,falk1991nonconforming}. %{\color{red}~\cite{Brenner1992LinearFE,ciarlet2002finite,courant1994variational,falk1991nonconforming,hrennikoff1941solution}}. 
Lagrange finite element methods are the simplest and most widely used approaches in this framework; however, some low-order schemes may suffer from volumetric ``locking" for nearly incompressible materials \cite{ambroziak2013locking,doll2000volumetric}. 
Mixed finite element methods based on the Hellinger--Reissner formulation treat stress and displacement as independent variables and effectively avoid locking phenomena \cite{MR3097958}. Compared with standard Lagrange finite element method, this approach not only naturally enforces traction continuity across element interfaces but also preserves local conservation laws by satisfying the equilibrium equations exactly. This ensures high-quality stress approximations, especially in regions with stress concentration \cite{olesen2018a}. However, the symmetry constraint on the stress tensor makes the construction of stable conforming mixed elements highly nontrivial; see \cite{MR553347,MR761879,MR2336264,MR1930384,MR2629995,MR2831058,MR2377256} and the references therein. %{\color{red}\cite{MR553347,MR761879,MR2336264,MR1930384,MR1977627,MR2629995,MR2831058,MR2377256}}.
Recently, Hu and Zhang proposed a family of intrinsic mixed finite elements on simplicial meshes in two and three dimensions~\cite{hu2015familyconformingmixedfinite, MR3301063}, later extended to arbitrary dimensions in~\cite{MR3352360}. These elements allow direct stress approximation and are relatively simple to implement compared with earlier mixed finite elements. Furthermore, higher-order Hu--Zhang elements can be naturally employed to further improve stress accuracy.
In addition, discontinuous Galerkin (DG for short hereinafter) methods based both primal and mixed formulations are also widely used for elasticity problems due to their flexibility on non-conforming meshes, including hybridizable DG, local DG and weak Galerkin methods~\cite{soon2009hybridizable,fu2015analysis,qiu2018hdg,chen2016robust,wang2020mixed}.%{\color{red}~\cite{chen2010local,MixLDGWu,soon2008hybridizable,soon2009hybridizable,fu2015analysis,qiu2018hdg,chen2016robust,wang2016locking,wang2020mixed,wang2018hybridized,yi2019lowest}}.   

Coupling finite element methods combine different discretizations on different subdomains, enabling the use of complementary advantages of each method. Typical examples include the coupling of Lagrange and DG methods~\cite{perugia2001coupling,paipuri2019coupling,MR4056294} and mortar methods~\cite{belgacem1999mortar,MR2574903,wohlmuth2000mortar}. %{\color{red}\cite{belgacem1999mortar,bernardi1993domain,MR2574903,wohlmuth2000mortar}}. 
In most cases, interface conditions are enforced weakly via Nitsche’s method or Lagrange multipliers. For scalar elliptic problems, a coupling of Raviart--Thomas and Lagrange elements was proposed in~\cite{wieners1998coupling}, where interface conditions are incorporated directly into the formulation without using Nitsche's trick or Lagrange multipliers.

To accurately and efficiently resolve localized stress concentrations, we propose a coupling method of HuZhang mixed elements and Lagrange elements, where the Hu--Zhang elements are employed on a flexibly chosen subdomain containing stress concentration, and the Lagrange elements are used elsewhere. The interface between the two subdomains is naturally formed by the intersection of their boundaries, and can be defined as a union of $(n-1)$-dimensional faces, which is aligned with the mesh. The continuity of displacement and of the normal component of the stress tensor is imposed on the interface. Following \cite{hu2024direct},  these interface conditions are embedded directly into the weak formulation. 
For the coupling method of the Hu--Zhang element of order $k$ $(k\ge n+1$) and the Lagrange element of order $k + 1$, we establish well-posedness and optimal convergence in the energy norm, as well as optimal $L^2$ convergence via duality argument.  A superconvergence result for the displacement on mixed subdomain is proved, enabling improved approximations through postprocessing. Notably, under suitable assumptions, the analysis remains valid even when the mixed subdomain consists of only a few layers of elements surrounding the stress concentration, providing an effective balance between accuracy and computational cost. To the best of our knowledge, this is the first coupling method for linear elasticity that combines conforming mixed finite elements with standard Lagrange elements in a locally selective manner while preserving optimal convergence even when the mixed region is of size $O(h)$.
Although the analysis is restricted to displacement boundary conditions, numerical experiments for benchmark problems demonstrate the robustness and efficiency of the method for both displacement and traction boundary conditions.

%To ensure the robustness on the subdomain with stress concentration and numerical efficiency on the remaing domain, we propose a coupling method of Hu--Zhang finite elements and Lagrange finite elements for linear elasticity problems.{\color{blue} In this framework, Hu–Zhang finite elements are applied in a mesh-dependent subdomain consisting of cells in regions of stress concentration, while Lagrange finite elements are used in the remaining part of the domain. In this coupling finite element method, the interface terms in the weak formulation are treated by exchanging data across the interface: the interfacial data from one subdomain serve as natural boundary conditions at the interface for the other, and vice versa.} This coupling finite element method treats the interface condition as natural boundary conditions with respect to both subdomains, which is embedded in the weak formulation. For the coupling method of the Hu--Zhang element of order $k$~($k\ge n+1$) and Lagrange element of order $k+1$, the wellposedness and optimal convergence in energy norm are analyzed, and also the optimal convergence rate in  $L^2$ norm  by duality argument. A superconvergence result of the displacement on stress concentrate subdomain is proved, which leads to a supercongent approximate displacement by some postprocessing technique. Various numerical tests are conducted on both model problems and also  benchmark problems to demonstrate the advantages of the proposed method in terms of accuracy and robustness. 

The remainder of this paper is organized as follows.  Section \ref{notations} introduces some notations. Section \ref{con-mix} introduces and analyzes the coupling finite element method of Hu--Zhang elements and Lagrange elements.  
Section \ref{Sec:Num} presents numerical examples  to validate the advantages of the proposed coupling method.

\section{Notations and preliminaries}
\label{notations}

For a nonnegative integer $k$ and a bounded region $G\subset\Rn$, let $H^k(G; X)$ and $H(\Div,G;X)$ be the usual Sobolev space of functions taking values in the finite-dimensional vector space $X$, where $X$ can be $\Snn$, $\Rn$, or $\bR$, and $\|\cdot\|_{k,G}$ and $|\cdot|_{k,G}$ be the norm and semi-norm of $H^k(G;X)$, respectively. The space $H(\Div,G;\Snn)$ is equipped with the standard $H(\Div)$ norm
$
\|\tau\|_{H(\Div, G)}^2:=\|\tau\|_{0,G}^2+\|\Div\tau\|_{0,G}^2.
$
Denote the standard $L^2$ inner products on $G$ by~$(\cdot,\cdot)_G$, where the subscript $G$ is omitted if $G = \Omega$, and the duality pairing between $H^{-1/2}(C;\Rn)$ and $H^{1/2}(C;\Rn)$ by $\langle{\cdot,\cdot}\rangle_{C}$ for any given curve $C$.
Let  $\bP_{k}(G; X)$ be the polynomial space taking values in $X$ with degrees not larger than $ k$.

Let $\mathcal{T}_h$ be a regular triangulation of $\Omega$ with $h:=\max_{K\in \mathcal{T}_h}h_K$ and $h_K:= \text{diam}(K)$. By an artificial division $\mathcal{T}_h=\To\cup \Tt$ with $\To\cap \Tt=\emptyset$, denote the unions of all elements in $\To$ and $\Tt$ by $\Omo$ and $\Omt$, respectively.
Let $\Ga$ denote the interface between $\Omo$ and $\Omt$, and $\Gai=\partial\Omi\setminus\Ga$. 
Denote the sets of all interface elements of $\To$ and $\Tt$ sharing at least one $(n-1)$-dimensional face with $\Ga$ by  $\ToGa$ and $\TtGa$, respectively, see Fig~\ref{Fig:Diagram}. 
%In this paper, without loss of generality, we assume that the ratio between the diameter and the minimum width of $\Omega_h^+$ is $O(1)$ and that $\Gamma_h^+\neq \emptyset$. These assumptions are made to remove the dependence on the mesh size $h$ when applying Korn’s inequality in the subsequent analysis \cite{Horgan1995}.
% we employ Hu–Zhang finite elements only in a small region where stress concentration occurs.
To enhance computational efficiency, $\Omt$ may be chosen as a subdomain of size~$O(h)$, consisting of only a few layers of elements containing stress concentration. 
This enables the following assumption on $\Omo$, which is easily satisfied.
\begin{figure}[htbp]
	\centering
	\begin{tikzpicture}[scale=2,
	dashedline/.style={dashed, thick},
	]
	% 定义正六边形的六个顶点
	\coordinate (A) at (0:1.5);
	\coordinate (B) at (60:1.5);
	\coordinate (C) at (120:1.5);
	\coordinate (D) at (180:1.5);
	\coordinate (E) at (240:1.5);
	\coordinate (F) at (300:1.5);
	\coordinate (O) at (0,0); 
	% 定义线段中点
	\coordinate (A') at (0:0.75);
	\coordinate (B') at (60:0.75);
	\coordinate (C') at (120:0.75);
	\coordinate (D') at (180:0.75);
	\coordinate (E') at (240:0.75);
	\coordinate (F') at (300:0.75);
	\coordinate (AB) at (30:1.299);
	\coordinate (BC) at (90:1.299);
	\coordinate (CD) at (150:1.299);
	\coordinate (DE) at (210:1.299);
	\coordinate (EF) at (270:1.299);
	\coordinate (FA) at (330:1.299);
	%填充颜色
	\fill[gray!80] (BC) --(C') -- (B')--(O) -- (F') -- (E') -- (EF) -- (E)--(O)--(C) --cycle;
	\fill[blue!30] (CD) -- (C')--(D') -- (E') -- (DE) -- (E) -- (O) -- (C) --cycle;
		% 绘制正六边形
	\draw[thick] (A) -- (B) -- (C) -- (D) -- (E) -- (F) -- cycle;
	\draw[thick] (O) -- (A);
	\draw[thick] (O) -- (B);
	\draw[thick] (O) -- (C);
	\draw[thick] (O) -- (D);
	\draw[thick] (O) -- (E);
	\draw[thick] (O) -- (F);
	% 绘制小三角形
	\draw[thick] (A') -- (AB)-- (B') --cycle;
	\draw[thick] (B') -- (BC)-- (C') --cycle;
	\draw[thick] (C') -- (CD)-- (D') --cycle;
	\draw[thick] (D') -- (DE)-- (E') --cycle;
	\draw[thick] (E') -- (EF)-- (F') --cycle;
	\draw[thick] (F') -- (FA)-- (A') --cycle;
	% 标注点
	\coordinate (C1) at (180:1.7);
	\coordinate (C2) at (0:1.7);
	\coordinate (C3) at (120:1.6);
	\coordinate (C4) at (90:0.3);
	\coordinate (C5) at (160:0.4);
	%绘制界面
	\draw[red,line width=1.5pt] (C) -- (O) -- (E);
	
	\node[above] at (C1) {$\mathcal{T}_h^-$};
	\node[above] at (C2) {$\mathcal{T}_h^+$};
	\node[above] at (C3) {$\Gamma_h$};
	\node[above] at (C4) {$\mathcal{T}_{\Gamma,h}^+$};
	\node[above] at (C5) {$\mathcal{T}_{\Gamma,h}^-$};
	\end{tikzpicture}
	\caption{The diagram of local mesh}
	\label{Fig:Diagram}
\end{figure}
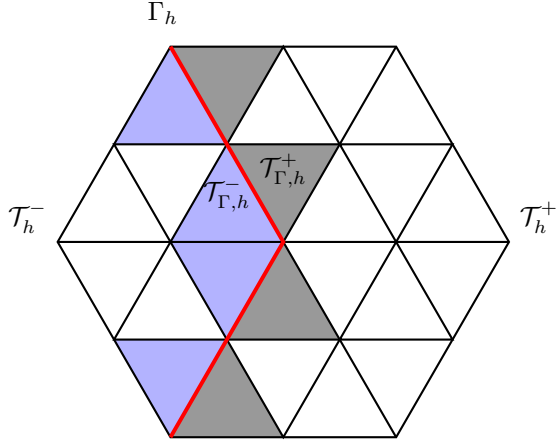	
\begin{assumption}\label{Ass:kroncon}
The domain $\Omega_h^+$ can be decomposed into finitely many connected components. Each connected component $\omega$ is a polygonal domain with $\omega\cap \Gamma_h^+ \neq \emptyset$ and $\text{diam}(\omega)/\text{diam}(B_{\omega}) = O(1)$, where $B_\omega$ is the largest ball contained in $\omega$.
% In particular, to enhance computational efficiency, $\Omt$ may be chosen as a subdomain of size $O(h)$, consisting of only a few layers of mesh elements, provided that the following inequality holds on the remaining part $\Omo$:
  % \begin{equation}
  %   \|\vo\|_{1,\Omo}\leq C\|\varepsilon(\vo)\|_{0,\Omo},
  % \end{equation}
  % where $C$ is a constant independent of $h$. 
\end{assumption}
%Thanks to the assumption that the ratio between the diameter and the minimum width of $\Omega_h^+$ is $O(1)$ and that $\Gamma_h^+\neq \emptyset$ in Section~\ref{notations}, the constant in Korn's inequality \cite{bernner2008the,Horgan1995} on $\Omo$ is independent of $h$. That is 
%\begin{equation}\label{kroncon}
%\|\vo\|_{1,\Omo}\lesssim\|\varepsilon(\vo)\|_{0,\Omo}.
%\end{equation}

% Divide the domain $\Omega$ into two non-overlapping subdomains $\Omo$ and $\Omt$, where $\Omt$ covers the region with stress concentration. Let $\Gamma$ denote the interface between $\Omo$ and $\Omt$, and $\Gai=\partial\Omi\setminus\Gamma$. Let $v^+ = v|_\Omo$ and $v^- = v|_\Omt$ be the restriction of function $v$ to $\Omo$ or $\Omt$, respectively.
% Let $\mathcal{T}_h$ be a regular triangulation of $\Omega$ with no elements cut by the interface, which leads to subtriangulations $\To$ and $\Tt$ of the subdomains $\Omo$ and $\Omt$, respectively.
Let $\mathcal{F}_h$ (resp.~$\mathcal{F}_h^{\pm}$) be the set of all~$(n-1)$-dimensional faces of $\mathcal{T}_h$ (resp.~$\mathcal{T}_h^\pm$) and let $\mathcal{F}_{h,I}$(resp.~$\mathcal{F}_{h,I}^{\pm}$) be the corresponding set of all $(n-1)$-dimensional interior faces.
The set of all~$(n-1)$-dimensional faces contained on $\Ga$ is defined by $\FhGa$.
Denote the diameter of $F\in\mathcal{F}_h$ by $h_F$, and the unit normal of $F$ pointing from the element with larger index to the one with smaller index by~$n_F$. Let 
$v^+ = v|_\Omo$ and~$v^- = v|_\Omt$. Define the jump
$$
[\vt]:=\begin{cases}
    \vt|_{K_1}- \vt|_{K_2},& \mbox{ if } F\in\mathcal{F}^-_{h,I},\\
    \vt,& \mbox{ if } F\in\Ft\setminus\mathcal{F}^-_{h,I}.
\end{cases}
$$
where $K_1$ and $K_2$ are the elements sharing $F$ with the index of $K_1$ larger than that of $K_2$. 
Throughout the paper, let $C$ be a generic positive constant independent of the mesh size $h$, and refer to different values at different places. For ease of presentation, we shall use the symbol $A\lesssim B$ to denote that $A\leq CB$.

%\section{Conforming-Mixed Finite Element Method for linear elasticity problem}
% \section{Hu--Zhang and Lagrange Hybrid Finite Element Method for linear elasticity problem}
\section{A coupling mixed and Lagrange finite elements method}
\label{con-mix}
In this section, we propose a coupling finite element method for the linear elasticity equations~\eqref{intro::linearEquation}, and analyze the well-posedness and optimal priori error estimate.

\subsection{Weak formulation}
To begin with the discrete coupling finite element method, we introduce the coupling weak formulation, which adopts the primal form of elasticity problem on $\Omo$ and the mixed form on $\Omt$. The following function spaces are defined on $\Omo$ and $\Omt$, respectively:
$$
V^+=\{v\in H^1(\Omo;\Rn):~v|_{\Gao}=0\},\
\Sigt=H(\Div,\Omt;\Snn), \  \Vt=L^2(\Omt;\Rn). 
$$  
The primal weak formulation on $\Omo$ is to find $\uo\in\Vo$ such that for all $ \vo\in\Vo$,
\begin{equation}
\label{Omo:weakform}
(\Al^{-1}\varepsilon(\uo),\varepsilon(\vo))_{\Omo}-\langle\sigo \no,\vo\rangle_{\Ga}=(f,\vo)_{\Omo},
\end{equation}
where $\no$ is the unit normal vector of $\Ga$ pointing from $\Omo$ to $\Omt$.
The mixed weak formulation on $\Omt$ is to find $(\sigt,\ut)\in\Sigt\times\Vt$ such that for all $(\taut,\vt)\in\Sigt\times\Vt$,
\begin{equation}
\label{Omt:weakform}
\begin{cases}
(\Al\sigt,\taut)_{\Omt}+(\Div\taut,\ut)_{\Omt}-\langle\taut \nt,\ut\rangle_{\Ga}=0,\\
(\Div\sigt,\vt)_{\Omt}=-(f,\vt)_{\Omt}, 
\end{cases}  
\end{equation}
where $\nt=-\no$ is the unit normal vector of $\Ga$ pointing from $\Omt$ to $\Omo$.

% Assume that $u\in H^1(\Omega;\Rn)$ and $\sigma\in H(\Div,\Omega;\Snn)$ are the solutions of equation \eqref{intro::linearEquation}, the following conditions hold on the interface $\Ga$,
Note that  the solution $(u,\sigma)\in H^1(\Omega;\Rn)\times H(\Div,\Omega;\Snn)$ of equation~\eqref{intro::linearEquation} is also the solution of the corresponding interface problem with the following interface conditions on $\Ga$
\begin{equation}
\label{Gamma:continus}
\uo=\ut ~\mbox{ and }~ \sigo \no + \sigt \nt=0. 
\end{equation}
% Note that the normal component $\sigt \nt$ acts as the natural boundary condition for the weak formulation \eqref{Omo:weakform} on $\Omo$, and $\uo$ acts as the natural boundary condition for the weak formulation \eqref{Omt:weakform} on $\Omt$. This leads to a new continuous weak formulation on $\Omega$:This generates a global weak formulation on $\Omega$:
Similar to the idea in \cite{wieners1998coupling,hu2024direct},  the continuous formulation seeks~$(\uo,\sigt,\ut)\in \Vo\times\Sigt\times \Vt$ such that for all $(\vo,\taut,\vt)\in \Vo\times\Sigt\times \Vt$,
\begin{equation}
\label{mixform:one}
\begin{cases}
(\Al\sigt,\taut)_\Omt+(\Div\taut,\ut)_\Omt-\langle\taut \nt,\uo\rangle_\Ga&=0,\\
(\Div\sigt,\vt)_\Omt&=-(f,\vt)_\Omt,\\
-\langle\sigt \nt,\vo\rangle_{\Ga} -(\Al^{-1}\varepsilon(\uo),\varepsilon(\vo))_{\Omo}&=-(f,\vo)_{\Omo},
\end{cases}
\end{equation}
which is a symmetric perturbed saddle point system and can be rewritten as an equivalent saddle point system
\begin{equation}
\label{mixform:two}
\begin{cases}
a(\uo,\sigt; \vo,\taut) + b(\vo,\taut; \ut) &= (f,\vo)_{\Omo},\\
b(\uo,\sigt; \vt)&=-(f,\vt)_{\Omt},
\end{cases}   
\end{equation}
where the bilinear forms
\begin{equation}
\label{define:ab}
\begin{aligned}
a(\uo,\sigt;\vo,\taut)
&:=(\Al^{-1}\varepsilon(\uo),\varepsilon(\vo))_\Omo+(\Al\sigt,\taut)_\Omt\\
&\quad+\langle\sigt \nt,\vo\rangle_\Ga-\langle\taut\nt,\uo\rangle_\Ga,\\
b(\vo,\taut; \vt)&:=(\Div\taut,\vt)_{\Omt}.
\end{aligned}    
\end{equation}
% {\color{red}
% Thanks to the assumption that the ratio between the diameter and the minimum width of $\Omega_h^+$ is $O(1)$ and that $\Gamma_h^+\neq \emptyset$ in Section~\ref{notations}, the constant in Korn's inequality \cite{bernner2008the,Horgan1995} on $\Omo$ is independent of $h$. That is 
% \begin{equation}\label{kroncon}
% \|\vo\|_{1,\Omo}\lesssim\|\varepsilon(\vo)\|_{0,\Omo}.
% \end{equation}
% }
For any $(\vo,\taut)\in\Vo\times\Sigt$ and $\vt\in\Vt$, define the following norms:
\begin{equation}
\label{define:norms}
\svt{(\vo,\taut)}\svt_1:=\|\varepsilon(\vo)\|_{0,\Omo}+\|\taut\|_{H(\Div,\Omt)}
\quad \mbox{and}\quad
\svt{\vt}\svt_0:=\|\vt\|_{0,\Omt},
\end{equation}
According to~\cite{bernner2008the,Horgan1995}, Assumption \ref{Ass:kroncon} guarantees Korn's inequality on $\Omo$,
\begin{equation}\label{kroncon}
	\|v^+\|_{1,\Omo}\leq C\|\varepsilon(v^+)\|_{0,\Omo},\quad \forall v\in V^+,
\end{equation}
which indicates that $\svt{\cdot}\svt_1$ is a norm on $\Vo\times\Sigt$.

\begin{lemma}
\label{wellponess:conweakform}
The weak formulation \eqref{mixform:two} is well-posed, namely, there exist a unique solution $(\uo,\sigt,\ut)\in\Vo\times\Sigt\times\Vt$ satisfying:
$$
\svt{(\uo,\sigt)}\svt_1+\svt{\vt}\svt_0\lesssim\|f\|_{0,\Omega}.
$$
\end{lemma}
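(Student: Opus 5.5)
This is a saddle point problem, so I would apply Brezzi's theory to \eqref{mixform:two}. The bilinear form $a(\cdot;\cdot)$ is not symmetric, but its interface terms are skew: taking $(\vo,\taut)=(\uo,\sigt)$ cancels $\langle\sigt\nt,\uo\rangle_\Ga-\langle\sigt\nt,\uo\rangle_\Ga$. Hence coercivity on the kernel can be checked directly, without an inf-sup argument for $a$. Three things must be verified:
\begin{itemize}
\item[(i)] boundedness of $a$ and $b$ in the norms \eqref{define:norms};
\item[(ii)] coercivity of $a$ on the kernel $Z=\{(\vo,\taut):(\Div\taut,\vt)_\Omt=0\ \forall\vt\in\Vt\}=\{\Div\taut=0\}$;
\item[(iii)] the inf-sup condition for $b$.
\end{itemize}

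\textbf{Boundedness.} The volume terms of $a$ are bounded by Cauchy--Schwarz together with the bounds on $\Al$. For the interface terms, I would use the normal trace estimate $\|\taut\nt\|_{H^{-1/2}(\partial\Omt)}\lesssim\|\taut\|_{H(\Div,\Omt)}$. I would pair this not with the trace of $\vo$ on $\partial\Omt$ directly, but with an $H^1(\Omt)$ extension: extend $\vo\in\Vo$ by zero outside $\Omo\cup\Omt$. Since $\vo=0$ on $\Gao$, the Green formula gives
$$\langle\taut\nt,\vo\rangle_\Ga=(\Div\taut,E\vo)_\Omt+(\taut,\varepsilon(E\vo))_\Omt,$$
where $E\vo\in H^1(\Omt)$ vanishes on $\Gamma^-_h$ and equals $\vo$ on $\Ga$.

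The main obstacle is making the constant independent of $h$ when $\Omt$ has width $O(h)$, since trace and extension constants may then degenerate. My plan is to build $E$ elementwise: first take a Scott--Zhang-type or stable extension from $\Ga$ into the layer $\TtGa$, then cut it off. This gives only $\|E\vo\|_{1,\Omt}\lesssim h^{-1/2}\|\vo\|_{1/2,\Ga}$-type bounds, which is not good enough. The better route is to extend $\vo$ itself across $\Ga$ into $\Omt$ by a bounded $H^1(\Omega)$ extension of $\vo$ (zero on $\partial\Omega$ after a Stein-type extension from $\Omo$, which Assumption~\ref{Ass:kroncon} makes uniform). This yields $\|E\vo\|_{1,\Omt}\lesssim\|\vo\|_{1,\Omo}\lesssim\|\varepsilon(\vo)\|_{0,\Omo}$ by \eqref{kroncon}. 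The extension need not vanish on $\Gamma^-_h$ unless that boundary lies on $\partial\Omega$. Where it does not vanish, I would use that $\partial\Omt=\Ga\cup\Gamma^-_h$ with $\Gamma^-_h\subset\partial\Omega$, on which a zero-preserving extension exists. The bound of $b$ is trivial.

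\textbf{Kernel coercivity.} For $(\vo,\taut)\in Z$, skew-symmetry gives
$$a(\vo,\taut;\vo,\taut)=(\Al^{-1}\varepsilon(\vo),\varepsilon(\vo))_\Omo+(\Al\taut,\taut)_\Omt\ge \beta^{-1}\|\varepsilon(\vo)\|^2_{0,\Omo}+\alpha\|\taut\|_{0,\Omt}^2,$$
and $\|\Div\taut\|_{0,\Omt}=0$ on $Z$. This equals $\gtrsim\svt{(\vo,\taut)}\svt_1^2$.

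\textbf{Inf-sup for $b$.} Given $\vt\in L^2(\Omt)$, solve on a fixed domain: extend $\vt$ by zero to $\Omega$ and take $w\in H^1_0(\Omega;\Rn)$ solving $-\Div\varepsilon(w)=\vt$. Then set $\taut=-\varepsilon(w)|_\Omt$ and $\vo=0$. This gives $\Div\taut=\vt$ and $\|\taut\|_{H(\Div,\Omt)}\lesssim\|\vt\|_{0,\Omt}$ with a constant depending only on $\Omega$, uniform in the shape of $\Omt$.

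\textbf{Conclusion.} Brezzi's theorem (the version allowing nonsymmetric $a$ that is coercive on $Z$) then yields existence, uniqueness, and the stability bound. The right-hand side functionals are bounded by $\|f\|_{0,\Omo}\|\vo\|_{0,\Omo}\lesssim\|f\|\,\|\varepsilon(\vo)\|_{0,\Omo}$, using \eqref{kroncon}, and by $\|f\|_{0,\Omt}\|\vt\|_{0,\Omt}$.
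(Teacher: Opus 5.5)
Your overall architecture matches the paper's: Brezzi's theorem with continuity, coercivity on $Z$ (the interface terms cancel and $\Div\taut=0$), and an inf-sup condition built on the fixed domain $\Omega$. Your inf-sup field $\taut=-\varepsilon(w)|_{\Omt}$ is correct and uniform in the shape of $\Omt$. Here $w\in H^1_0(\Omega;\Rn)$ solves $-\Div\varepsilon(w)=\tilde{v}^-$, the zero extension of $\vt$. This is more elementary than the paper's $H^1(\Omega;\Snn)$ right inverse of the divergence, since only $\varepsilon(w)\in H(\Div,\Omega;\Snn)$ is needed.

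\textbf{The gap is the continuity step.} You rightly single out $h$-uniformity as the crux, but you settle it by asserting that Assumption~\ref{Ass:kroncon} makes an extension with $\|E\vo\|_{1,\Omt}\lesssim\|\vo\|_{1,\Omo}$ uniform. That assumption only constrains the components of $\Omo$, which is what the uniform Korn inequality \eqref{kroncon} needs. It gives no control of extensions into a thin $\Omt$, and the claim is false.

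Take $\Omega=(0,1)^2$ and $\Omt=(\frac12-h,\frac12+h)\times(0,1)$, so $\Omo$ is two rectangles satisfying Assumption~\ref{Ass:kroncon}. Let $\vo=0$ on the left rectangle and $\vo=(\chi(x)\psi(y),0)^T$ on the right one, with $\psi\in C_c^\infty(0,1)$, $\chi(\frac12+h)=1$ and $\chi(1)=0$. Let $\taut=\psi(y)e_1e_1^T$, which is symmetric and divergence free. Then $\|\varepsilon(\vo)\|_{0,\Omo}\lesssim1$ and $\|\taut\|_{H(\Div,\Omt)}=(2h)^{1/2}\|\psi\|_{0,(0,1)}$, while $\langle\taut\nt,\vo\rangle_\Ga=\|\psi\|_{0,(0,1)}^2$. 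Two consequences follow:
\begin{itemize}
\item Every $H^1$ function on $\Omt$ with trace $\vo$ on $\Ga$ satisfies $\|\partial_xE\vo\|_{0,\Omt}\ge(2h)^{-1/2}\|\psi\|_{0,(0,1)}$, so your first, $h^{-1/2}$-type bound had the right scaling.
\item The continuity constant of $a$ in the norms \eqref{define:norms} is at least of order $h^{-1/2}$.
\end{itemize}
The paper's face-by-face trace bound does not rescue this either. The same example violates its final inequality, because the local trace constants on $\TtGa$ and $\ToGa$ are not $h$-independent.

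\textbf{How to close it.} The lemma itself is still true, because the stability bound never needs uniform continuity of $a$ on all of $\Vo\times\Sigt$.
\begin{itemize}
\item Existence follows by restricting the global solution $(u,\sigma)$, or from Brezzi's theorem with an $h$-dependent constant on the fixed mesh.
\item Uniqueness follows from your kernel coercivity and inf-sup alone.
\item For the uniform bound, test with the solution. The interface terms cancel and $\Div\sigt=-f$ on $\Omt$, so $\beta^{-1}\|\varepsilon(\uo)\|_{0,\Omo}^2+\alpha\|\sigt\|_{0,\Omt}^2\le(f,\uo)_\Omo+(f,\ut)_\Omt$.
\item To control $\ut$, test the first equation with $(0,\taut)$, where $\taut=-\varepsilon(w)|_\Omt$ is your inf-sup field for $\vt=\ut$. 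Since $\Div\varepsilon(w)=0$ in $\Omo$ and $\varepsilon(w)$ has continuous normal traces, Green's formula on $\Omo$ gives $\langle\taut\nt,\uo\rangle_\Ga=(\varepsilon(w),\varepsilon(\uo))_\Omo$. Hence $\|\ut\|_{0,\Omt}\lesssim\|\sigt\|_{0,\Omt}+\|\varepsilon(\uo)\|_{0,\Omo}$ uniformly in $h$.
\end{itemize}
Combined with \eqref{kroncon}, this closes the estimate. The interface pairing is uniformly controlled only for stresses that are restrictions of global $H(\Div,\Omega)$ fields, and that is exactly what your inf-sup construction supplies.
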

\begin{proof}
By the trace theorem, the Cauchy-Schwarz inequality and Korn's inequality \eqref{kroncon} that
$$
\begin{aligned}
|\langle\sigt \nt,\vo\rangle_\Ga| &\leq
\sum_{F\in\FhGa}|\langle\sigt \nt,\vo\rangle_F|\leq\sum_{F\in\FhGa}\|\sigt\nt\|_{-\frac{1}{2},F}\|\vo\|_{\frac{1}{2},F}\\
&\lesssim(\sum_{K\in\TtGa}\|\sigt\|_{0,K}^2+\|\Div\sigt\|_{0,K}^2)^{\frac{1}{2}}(\sum_{K\in\ToGa}\|\vo\|_{1,K}^2)^{\frac{1}{2}}\\ &\lesssim\|\sigt\|_{H(\Div,\Omt)}\|\varepsilon(\vo)\|_{0,\Omo}.
\end{aligned}
$$
By the above inequality and the definition of norms in \eqref{define:norms}, the bilinear forms~$a(\cdot,\cdot)$ and $b(\cdot,\cdot)$ are continuous. To be specific,
\begin{equation}\label{continousab}
\begin{aligned}
|a(\uo,\sigt;\vo,\taut)|&\lesssim \svt{(\uo,\sigt)}\svt_1\svt{(\vo,\taut)}\svt_1,\\
|b(\vo,\taut;\ut)|&\leq\svt{(\vo,\taut)}\svt_1\svt{\ut}\svt_0.
\end{aligned}    
\end{equation}
Denote the kernel space of $\Vo\times\Sigt$ by
$$
Z:=\{(\vo,\taut)\in\Vo\times\Sigt~:~b(\vo,\taut;\vt)=0,~\mbox{ for all}~\vt\in\Vt\}.
$$
% Note that $\Div\taut=0$ holds for any $(\vo,\taut)\in Z$, which implies that the bilinear form $a(\cdot,\cdot)$ is coercive on $Z$; that is
For any $(\vo,\taut)\in Z$, it holds that $\Div\taut=0$. This implies the coercivity of the bilinear form $a(\cdot,\cdot)$ on $Z$:
\begin{equation}
\label{Kepl}
\begin{aligned}
     a(\vo,\taut;\vo,\taut)
     % &=(\Al^{-1}\varepsilon(\vo),\varepsilon(\vo))_\Omo+(\Al\taut,\taut)_\Omt\\
     % &\geq{\color{red} \frac12\min(\frac{1}{\beta},\alpha)\svt{(\vo,\taut)}\svt_1^2},
     &\geq C\svt{(\vo,\taut)}\svt_1^2.
\end{aligned}
\end{equation}
For each $\vt\in\Vt$. According to the stability result of~\cite{MR3352360,giraul1986finite}, there exists a $\tau\in H^1(\Omega;\Snn)$ such that
$$
\Div\tau|_{\Omo}=0,~\Div\tau|_{\Omt}=\vt,~\text{ and }~\|\tau\|_{1,\Omega}\leq C\|\vt\|_{0,\Omt}.
$$
Taking $\taut=\tau|_{\Omt}$, we obtain $\taut\in\Sigt$ with $\Div\taut=\vt$ and~$\|\taut\|_{H(\Div,\Omt)}\leq C\|\vt\|_{0,\Omt}$.
% {\color{red}{According to \cite{MR1930384,MR3352360}}},
% for any $\vt\in\Vt$, there exists $\taut\in\Sigt$ such that ${\color{red}\Div\taut=\vt}$ and $\|\taut\|_{H(\Div,\Omt)}\leq C\|\vt\|_{0,\Omt}$. 
% By the definition of $b(\ut;\taut,\vt)$, 
Thus the inf-sup condition holds
\begin{equation}\label{infsupcon}
    \inf_{0\neq\vt\in\Vt}\sup_{(\vo,\taut)\in\Vo\times\Sigt}\frac{b(\vo,\taut;\vt)}{\svt{(\vo,\taut)}\svt_1\svt{\vt}\svt_0}\geq \inf_{0\neq\vt\in\Vt}\frac{\svt{\vt}\svt_0^2}{C\svt{\vt}\svt_0^2}=\frac{1}{C}.
\end{equation}
Together with the boundedness \eqref{continousab}, the coercivity \eqref{Kepl}, and the inf-sup condition~\eqref{infsupcon}, it follows from \cite[Theorem 4.2.3]{MR3097958} that
$$
\begin{aligned}
% \svt{(\sigt,\uo)}\svt_1&\leq CC_{\alpha,\beta}\|f\|_{0,\Omega},\\
% \svt{u^-}\svt_0&\leq C C_{\alpha,\beta}\|f\|_{0,\Omega},
\svt{(\uo,\sigt)}\svt_1+\svt{\vt}\svt_0\lesssim\|f\|_{0,\Omega},
\end{aligned}
$$
which completes the proof.
\end{proof}

% \subsection{The Hu--Zhang and Lagrange hybrid finite element method}
\subsection{The coupling finite element method} 
Based on the weak formulation~\eqref{mixform:two}, a coupling finite element method applying the $P_k$ Hu--Zhang element on $\Omt$ and the $P_{k+1}$ Lagrange element on $\Omo$ is proposed, which maintains both solution accuracy and implementation efficiency.

%Throughout subsequent sections, unless otherwise specified, we consistently adopt the convention that $k$ is a positive integer satisfying $k\geq n+1$, where~$n=2,3$ is the space dimension.

\subsubsection{Discrete formulation} 

The Hu--Zhang element with degree $k$ ($k\geq n+1$) in~\cite{MR3352360, MR3301063, hu2015familyconformingmixedfinite} is employed here to approximate~$\sigt$ in
\begin{equation}
    \label{defSigth}
   \begin{aligned}
\Sigth&:=\{\sigma\in H(\Div,\Omt;\Snn)\,:~  \sigma=\sigma_c+\sigma_b,  \sigma_c\in H^1(\Omt;\Snn),  \\
&\qquad \sigma_c|_K\in \bP_k(K; \Snn), \,  \sigma_b|_K\in\Sigma_{K, k, b},~\mbox{ for all}~K\in\Tt\},
\end{aligned} 
\end{equation}
with bubble function space:
$$
\Sigma_{K, k, b} =\sum_{0\leq i<j\leq n}\lambda_i\lambda_j\bP_{k-2}(K;\bR)t_{i,j}t_{i,j}^T,
$$
where $\lambda_i$ is the associated barycentric coordinate corresponding to vertex $\bx_i$, and  $t_{i,j}$ is the associated unit tangent vectors of the edge of $K$ with vertices $\bx_i$ and $\bx_j$.
The discrete displacement spaces $\Vth$ and $\Voh$ for variables $\ut$ and~$\uo$, respectively, are chosen to be
\begin{align}
 \label{defVth}
 \Vth:&=\{v\in \Vt~:~v|_K\in \bP_{k-1}(K;\Rn),~\mbox{ for all}~K\in\Tt\},\\
 \label{defVoh}
 \Voh:&=\{v\in \Vo~:~v|_K\in\bP_{k+1}(K;\Rn),~\mbox{ for all}~K\in\To\}.
\end{align}
Then, our coupling finite element method for problem \eqref{mixform:two}, employing the Hu--Zhang element and the Lagrange element, seeks $(\uoh,\sigth,\uth)\in \Voh\times\Sigth\times\Vth$, such that for all $(\voh,\tauth,\vth)\in\Voh\times\Sigth\times\Vth$,  
\begin{equation}
\label{dismixform}
\begin{cases}
a(\uoh,\sigth; \voh,\tauth) + b(\voh,\tauth; \uth) &= (f,\voh)_{\Omo},\\
b(\uoh,\sigth; \vth)&=-(f,\vth)_{\Omt}.
\end{cases}   
\end{equation}

\subsubsection{Well-posedness and error estimates} 
The following lemma proves the well-posedness of the proposed coupling finite element method in \eqref{dismixform}.
\begin{lemma}
\label{diswell-posedness}
The discrete problem \eqref{dismixform} admits a unique solution $(\uoh,\sigth,\uth)\in\Voh\times\Sigth\times\Vth$ satisfying
$
\svt{(\uoh,\sigth)}\svt_1+\svt{\uth}\svt_0\lesssim\|f\|_{0,\Omega}.
$
% where the constant $C_{\alpha,\beta}$ depends on $\alpha,\beta$, but is independent of $h$.
\end{lemma}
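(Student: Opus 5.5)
We verify the hypotheses of the discrete Babu\v{s}ka--Brezzi theory \cite[Theorem 4.2.3]{MR3097958} on $\Voh\times\Sigth\times\Vth$, with constants independent of $h$, mirroring the proof of Lemma~\ref{wellponess:conweakform}. Since $\Voh\subset\Vo$, $\Sigth\subset\Sigt$ and $\Vth\subset\Vt$, the continuity bounds \eqref{continousab} hold on the discrete spaces with the same constants. The trace estimate for $\langle\sigt\nt,\vo\rangle_\Ga$ used only the trace theorem and Korn's inequality \eqref{kroncon}; the latter holds for all of $\Vo$ under Assumption~\ref{Ass:kroncon}, so no $h$-dependence is introduced even when $\Omt$ has width $O(h)$.

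For coercivity, we define the discrete kernel
\[
Z_h:=\{(\voh,\tauth)\in\Voh\times\Sigth:~(\Div\tauth,\vth)_\Omt=0~\mbox{ for all}~\vth\in\Vth\}.
\]
For the Hu--Zhang element, $\Div\Sigth\subset\Vth$, because $\sigma|_K\in\bP_k(K;\Snn)$ implies $\Div\sigma|_K\in\bP_{k-1}(K;\Rn)$. Taking $\vth=\Div\tauth$ therefore gives $\Div\tauth=0$ for every $(\voh,\tauth)\in Z_h$. The interface terms in $a$ cancel on the diagonal, since $\langle\tauth\nt,\voh\rangle_\Ga-\langle\tauth\nt,\voh\rangle_\Ga=0$. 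Hence
\[
a(\voh,\tauth;\voh,\tauth)\ge \alpha\|\tauth\|_{0,\Omt}^2+\beta^{-1}\|\varepsilon(\voh)\|_{0,\Omo}^2\gtrsim\svt{(\voh,\tauth)}\svt_1^2,
\]
which is the discrete analogue of \eqref{Kepl}.

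The main step is the discrete inf-sup condition
\[
\inf_{0\neq\vth\in\Vth}\sup_{(\voh,\tauth)}\frac{b(\voh,\tauth;\vth)}{\svt{(\voh,\tauth)}\svt_1\svt{\vth}\svt_0}\gtrsim 1
\]
with an $h$-independent constant. We take $\voh=0$ and rely on the stability of the Hu--Zhang pair on $\Tt$. Given $\vth$, we first construct, as in the proof of Lemma~\ref{wellponess:conweakform}, a $\tau\in H^1(\Omega;\Snn)$ with $\Div\tau|_\Omt=\vth$ and $\|\tau\|_{1,\Omega}\lesssim\|\vth\|_{0,\Omt}$. Next we apply the Hu--Zhang canonical interpolation $\Pits$ on $\Tt$ from \cite{MR3352360,MR3301063}. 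This operator is defined elementwise through degrees of freedom (vertex, face-moment and interior moments), so it is local, and its stability $\|\Pits\tau\|_{H(\Div,\Omt)}\lesssim\|\tau\|_{1,\Omt}$ follows from scaling element by element. It also satisfies the commuting property $(\Div(\tau-\Pits\tau),w)_K=0$ for all $w\in\bP_{k-1}(K;\Rn)$, so that $\Div\Pits\tau=\Div\tau=\vth$ on $\Omt$.

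The obstacle is that the published stability of $\Pits$ is usually stated on a global domain. We must check that it requires neither boundary conditions nor any shape assumption on $\Omt$, which may be only a few element layers thick. Because the interpolation is defined purely by local degrees of freedom and $\tau$ is already given in $H^1$ on all of $\Omega$, the element-by-element scaling argument suffices, and the constant depends only on shape regularity. Setting $\tauth=\Pits\tau$ then gives $b(0,\tauth;\vth)=\|\vth\|_{0,\Omt}^2$ with $\svt{(0,\tauth)}\svt_1\lesssim\|\vth\|_{0,\Omt}$, which proves the inf-sup condition. Finally, \cite[Theorem 4.2.3]{MR3097958} yields existence, uniqueness and the bound $\svt{(\uoh,\sigth)}\svt_1+\svt{\uth}\svt_0\lesssim\|f\|_{0,\Omega}$, since the right-hand side functionals are bounded by $\|f\|_{0,\Omega}$ together with Korn's inequality \eqref{kroncon}.
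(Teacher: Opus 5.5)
Your overall architecture matches the paper's. Continuity comes from conformity. Coercivity on $\Zh$ follows because $\Div\Sigth\subset\Vth$ forces $\Div\tauth=0$. The inf-sup condition comes from an $H^1(\Omega;\Snn)$ lift of $\vth$ combined with Hu--Zhang stability on $\Tt$; the paper handles this last step by invoking \cite[Theorem 3.1]{MR3352360} together with the lifting argument of Lemma~\ref{wellponess:conweakform}.

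The gap is in the step you yourself flag as the obstacle. You take $\Pits$ to be the canonical interpolant defined by the Hu--Zhang degrees of freedom, which include point values of $\tau$ at the vertices. You then claim $\|\Pits\tau\|_{H(\Div,\Omt)}\lesssim\|\tau\|_{1,\Omt}$ by element-wise scaling. For $n=2,3$, vertex evaluation is not a bounded functional on $H^1(K)$. So this interpolant is not even defined on your lift $\tau$, for which only an $H^1$ bound $\|\tau\|_{1,\Omega}\lesssim\|\vth\|_{0,\Omt}$ is available. Scaling for a nodal interpolant only yields bounds involving higher Sobolev norms of $\tau$, and those are not controlled by $\|\vth\|_{0,\Omt}$ uniformly in $h$. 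Likewise, the exact identity $\Div\Pits\tau=\Pth\Div\tau$ that you need is a property of a specifically constructed operator, not of a DOF-based interpolant in general.

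The fix is to use an $H^1$-stable quasi-interpolant of the kind in \cite[Remark 3.1]{MR3352360} (the operator the paper later uses in \eqref{weak:PitsPro}), built in three steps:
\begin{enumerate}
\item Apply Scott--Zhang-type averaging \cite{MR1011446} to the $C^0$ $\bP_k$ part.
\item Fix the face degrees of freedom by face moments, so that the moments of $\tau n_F$ against $\bP_1(F;\Rn)$ are reproduced, and hence so are those against rigid motions. This is possible because $k\ge n+1$.
\item Correct elementwise with $H(\Div)$ bubbles. This works because $\Div\Sigma_{K,k,b}$ is the $L^2(K)$-orthogonal complement of the rigid motions in $\bP_{k-1}(K;\Rn)$, and after step 2 the residual $\vth-\Div\tau_c$ lies in that complement. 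Here $\tau_c$ denotes the continuous $\bP_k$ field produced by steps 1--2.
\end{enumerate}
Each step is bounded using only averages and trace inequalities on element patches, never point values. If the averaging sets are chosen inside $\Tt$, your locality argument then goes through. It gives $\tauth\in\Sigth$ with $\Div\tauth=\vth$ and $\|\tauth\|_{H(\Div,\Omt)}\lesssim\|\tau\|_{1,\Omt}\lesssim\|\vth\|_{0,\Omt}$, independently of the thickness of $\Omt$. With this replacement the rest of your proof is correct.
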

\begin{proof}
The continuity of the bilinear forms $a(\cdot,\cdot)$ and $b(\cdot,\cdot)$ follows from the fact that $\Voh\times\Sigth\times\Vth$ is a conforming subspace of $\Vo\times\Sigt\times\Vt$ and~\eqref{continousab}.
% {\color{blue}The proof for the continuity of the bilinear $a(\cdot,\cdot)$ is trivial.}
By the definition, the discrete space $\Vth$ consists of piecewise polynomials with degrees at most $k-1$, while $\Sigth$ consists of piecewise polynomials with degrees at most $k$,  it follows that
$\Div\Sigth\subset\Vth.$
Thus, the discrete kernel space
$$
\begin{aligned}
\Zh&=\{(\voh,\tauth)\in\Voh\times\Sigth~:~b(\voh,\tauth;\vth)=0,~\mbox{ for all }\,\vth\in\Vth\}\\
&=\{(\voh,\tauth)\in\Voh\times\Sigth~:~\Div\tauth|_{K}=0,~\mbox{ for all }\,K\in\Tt\}
\end{aligned}
$$
is a subset of $Z$. This, together with \eqref{Kepl}, implies that the bilinear form $a(\cdot,\cdot)$ is coercive on $\Zh$. 
% namely,
% \begin{equation}
% \label{disKepl}
% \begin{aligned}
% a(\voh,\tauth;\voh,\tauth)
% &\geq\frac12\min(\frac{1}{\beta},\alpha)\svt{(\voh,\tauth)}\svt_1^2,
% \end{aligned}
% \end{equation}
% for all $(\voh,\tauth)\in \Zh$.

According to \cite[Theorem 3.1]{MR3352360} and the same argument to the proof of Lemma \ref{wellponess:conweakform}, for any $\vth\in\Vth$, there exists $\tauth\in\Sigth$ such that $\Div\tauth=\vth$ and $\|\tauth\|_{H(\Div,\Omt)}\leq C\|\vth\|_{0,\Omt}$. Thus, the discrete inf-sup condition holds:
\begin{equation}
\inf_{0\neq\vth\in\Vth}\sup_{(\voh,\tauth)\in\Voh\times\Sigth}\frac{b(\voh,\tauth;\vth)}{\svt{(\voh,\tauth)}\svt_1\svt{\vth}\svt_0}\geq %\inf_{0\neq\vth\in\Vth}\frac{\svt{\vth}\svt_0^2}%{C\svt{\vth}\svt_0^2}=
\frac{1}{C}.
\end{equation}
This implies that
$$
\svt{(\uoh,\sigth)}\svt_1+\svt{\vth}\svt_0\lesssim\|f\|_{0,\Omega},
$$
which completes the proof.
\end{proof}

A combination of the coercivity condition, the inf-sup condition, the standard mixed finite element theory \cite[Theorem 5.2.2]{MR3097958}, and the canonical interpolation error estimates of $\Sigth$, $\Vth$, and $\Voh$ \cite{MR3352360, bernner2008the, MR1011446}, yields the following error estimate.
\begin{theorem}\label{Th:main1}
\label{diserror}
Let $(\uo,\sigt,\ut)\in\Vo\times\Sigt\times\Vt$ be the solution of problem~\eqref{mixform:two} and~$(\uoh,\sigth,\uth)\in\Voh\times\Sigth\times\Vth$ be the discrete solution of \eqref{dismixform}. If $\uo\in H^{k+1}(\Omo;\Rn)$, $\sigt\in H^{k+1}(\Omt;\Rn)$ and $\ut\in H^{k}(\Omt;\Rn)$, the following error estimate holds:
% Then, for $k\geq n+1$, we have
\begin{equation}
\label{error:one}
\begin{aligned}
\|\varepsilon(\uo)-\varepsilon(\uoh)\|_{0,\Omo}&+\|\sigt-\sigth\|_{H(\Div,\Omt)}+\|\ut-\uth\|_{0,\Omt}\\
&\lesssim h^{k}(\|\uo\|_{k+1,\Omo}+\|\sigt\|_{k+1,\Omt}+\|\ut\|_{k,\Omt}).
% \|\uo-\uoh\|_{1,\Omo}&+\|\sigt-\sigth\|_{H(\Div,\Omt)}+\|\ut-\uth\|_{0,\Omt}\\
% &\lesssim h^{k}(\|\uo\|_{k+1,\Omo}+\|\sigt\|_{k+1,\Omt}+\|\ut\|_{k,\Omt}).
\end{aligned}
\end{equation}
\end{theorem}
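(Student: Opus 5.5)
Since \eqref{dismixform} is a conforming Galerkin discretization of \eqref{mixform:two}, we have Galerkin orthogonality. The discrete stability constants obtained in Lemma~\ref{diswell-posedness} (continuity \eqref{continousab}, coercivity of $a(\cdot,\cdot)$ on $\Zh\subset Z$, and a discrete inf-sup constant independent of $h$) therefore feed directly into the abstract quasi-optimality result \cite[Theorem 5.2.2]{MR3097958}. This gives
\begin{equation*}
\svt{(\uo-\uoh,\sigt-\sigth)}\svt_1+\svt{\ut-\uth}\svt_0\lesssim \inf_{(\voh,\tauth)\in\Voh\times\Sigth}\svt{(\uo-\voh,\sigt-\tauth)}\svt_1+\inf_{\vth\in\Vth}\svt{\ut-\vth}\svt_0 .
\end{equation*}
I would use the version of the theorem in which the first infimum may be taken over all of $\Voh\times\Sigth$. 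This holds because $\Div\Sigth\subset\Vth$ and there is a uniformly bounded discrete right inverse of $\Div$, so a Fortin-type correction maps any approximant into the discrete affine kernel. The left-hand side is exactly the left-hand side of \eqref{error:one}, by the definitions \eqref{define:norms}.

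It then remains to bound the three best-approximation errors by interpolation.
\begin{itemize}
\item For the displacement on $\Omo$, take $\voh=\Ioh\uo$, the $P_{k+1}$ Lagrange (or Scott--Zhang) interpolant. It vanishes on $\Gao$, so $\voh\in\Voh$, and $\|\varepsilon(\uo-\Ioh\uo)\|_{0,\Omo}\le|\uo-\Ioh\uo|_{1,\Omo}\lesssim h^{k}\|\uo\|_{k+1,\Omo}$. This uses only $H^{k+1}$ regularity, which is below the $H^{k+2}$ needed for full order $k+1$; this is why the rate is $h^k$.
\item For $\ut$, take $\Pth\ut$, the $L^2$ projection onto piecewise $\bP_{k-1}$. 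Then $\|\ut-\Pth\ut\|_{0,\Omt}\lesssim h^{k}\|\ut\|_{k,\Omt}$.
\item For the stress, take $\tauth=\Pits\sigt$, the canonical Hu--Zhang interpolant of \cite{MR3352360}. It satisfies $\|\sigt-\Pits\sigt\|_{0,\Omt}\lesssim h^{k+1}\|\sigt\|_{k+1,\Omt}$ and the commuting property $\Div\Pits\sigt=\Pth\Div\sigt$. Hence $\|\Div(\sigt-\Pits\sigt)\|_{0,\Omt}=\|\Div\sigt-\Pth\Div\sigt\|_{0,\Omt}\lesssim h^{k}|\Div\sigt|_{k,\Omt}\lesssim h^k\|\sigt\|_{k+1,\Omt}$.
\end{itemize}
Summing the three bounds gives \eqref{error:one}.

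The main obstacle is making sure that all the hidden constants are independent of $h$ even when $\Omt$ is only $O(h)$ thick. In the abstract estimate, the continuity constant of $a(\cdot,\cdot)$ involves the interface term $\langle\tau\nt,v\rangle_\Ga$. That constant was controlled in Lemma~\ref{wellponess:conweakform} by a trace bound together with Korn's inequality \eqref{kroncon} on $\Omo$, which Assumption~\ref{Ass:kroncon} makes uniform; I would check that this step does not require any Korn or Poincaré-type bound on $\Omt$. The inf-sup constant on $\Omt$ comes from the extension argument through $H^1(\Omega)$ together with \cite[Theorem 3.1]{MR3352360}, and so is likewise independent of the shape of $\Omt$. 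The interpolants are all local, or element-wise in the case of $\Pits$, so their estimates hold uniformly regardless of how thin $\Omt$ is. With these points verified, the proof is a direct assembly of known results.
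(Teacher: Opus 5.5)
Your proposal is correct and is essentially the paper's argument: quasi-optimality from \cite[Theorem 5.2.2]{MR3097958}, using the continuity, the kernel coercivity and the uniform discrete inf-sup of Lemma~\ref{diswell-posedness}, followed by the canonical interpolation estimates (the $P_{k+1}$ Lagrange/Scott--Zhang interpolant on $\Omo$, the $L^2$ projection $\Pth$, and the Hu--Zhang interpolant $\Pits$ with its commuting property). Your Fortin-type justification for taking the infimum over all of $\Voh\times\Sigth$ is harmless but not needed: the discrete inf-sup alone gives that version of the estimate, and in any case $\Div\Pits\sigt=\Pth\Div\sigt=-\Pth f$ already places $\Pits\sigt$ in the discrete affine kernel.
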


% \begin{proof}
% The well-posedness results established in Lemma \ref{wellponess:conweakform} and Lemma \ref{diswell-posedness}, together with the standard theory of mixed finite element methods \cite[Theorem 5.2.2]{MR3097958}, immediately yield the following error estimate
% \begin{equation}\label{quaerror}
% \begin{aligned}
% &\svt{(\uo-\uoh,\sigt-\sigth)}\svt_1+\svt{\ut-\uth}\svt_0 \\
% &\quad\lesssim\inf_{(\voh,\tauth,\vth)\in\Voh\times\Sigth\times\Vth}\svt{(\uo-\voh,\sigt-\tauth)}\svt_1+\svt{\ut-\vth}\svt_0.
% \end{aligned}    
% \end{equation}
% Let $\Ioh$ be the canonical interpolation of Lagrange finite element from $\Vo$ to $\Voh$, and $\Pth$ be the local $L^2$ projection operator form $\Vt$ to $\Vth$. The following estimates hold:
% \begin{align}
% &\|v-\Ioh v\|_{1,\Omo}\lesssim h^{l}\|v\|_{l+1,\Omo}~\mbox{for all}~v\in H^{l+1}(\Omo;\Rn)~\mbox{and}~0\leq l\leq k+1, \label{Ioherror}   \\
% &\|v-\Pth v\|_{0,\Omt}\lesssim h^{k}\|v\|_{k,\Omt} ~\mbox{for all}~v\in H^k(\Omt;\Rn). \label{Ptherror}
% \end{align}
% Denote $\Ith$ by the Scott-Zhang \cite{MR1011446} interpolation operator from $\Sigt$ to $\Sigth$, satisfying
% \begin{equation}\label{Itherror}
% \|\sigt-\Ith\sigt\|_{0,\Omt}+h|\sigt-\Ith\sigt|_{H(\Div,\Omt)}\lesssim h^{k+1}\|\sigt\|_{k+1,\Omt}.
% \end{equation}
% Taking $\voh=\Ioh\uo$, $\tauth=\Ith\taut$ and $\vth=\Ith\ut$ in \eqref{quaerror}, we can obtain \eqref{error:one}.
% \end{proof}
\begin{remark}
% Although the choice of the subdomain $\Omega_h^{\pm}$ depends on the mesh, the discrete displacement and stress still achieve global convergence of order $k$, and this convergence rate is independent of the size of $\Omega_h^{\pm}$, provided that the exact solution satisfies $u\in H^{k+2}(\Omega)$.
The theorem above implies that the $k$-th order global convergence rate is guaranteed under a quite flexible choice of $\Omo$ and $\Omt$ if the exact solution~$u\in H^{k+2}(\Omega;\Rn)$. This offers the convenience to choose suitable domain divisions to meet computational requirements.
\end{remark}

\subsubsection{Optimal error estimate of $\sigth$ and $\uoh$}
In this subsection, an improved and optimal error estimate of $\uoh$ in both energy norm and $L^2$ norm and $\sigth$ in~$L^2$ norm is proved. Furthermore, two superconvergent results of $\uth$ are provided.

To begin with, define the mesh-dependent norms
$$
\begin{aligned}
\|\tauth\|_{0,h,\Omt}^2&:=\|\tauth\|_{0,\Omt}^2+\sum_{F\in\Ft}h_F\|\tauth n_F\|_{0,F}^2,\\
|\vth|_{1,h,\Omt}^2&:=\|\varepsilon_h(\vth)\|_{0,\Omt}^2+\sum_{F\in\mathcal{F}_h^-}h^{-1}_F\|[\vth]\|^2_{0,F},\\
\svt{(\voh,\tauth)}\svt&:=\|\varepsilon(\voh)\|_{0,\Omo}+\|\tauth\|_{0,h,\Omt}.
% \svt{(\voh,\tauth)}\svt&:=\|\voh\|_{1,\Omo}+\|\tauth\|_{0,h,\Omt}.
\end{aligned}
$$
By \cite[Lemma 3.3.]{MR3787386},
$$
|\vth|_{1,h,\Omt}\lesssim\sup_{\tauth\in\Sigth}\frac{(\Div\tauth,\vth)_{\Omt}}{\|\tauth\|_{0,h,\Omt}},~\mbox{ for all} ~\vth\in\Vth,
$$
which implies the following discrete inf-sup condition 
\begin{equation}\label{weak::BB}
|\vth|_{1,h,\Omt}\lesssim\sup_{(\voh,\tauth)\in\Voh\times\Sigth}\frac{b(\voh,\tauth;\vth)}{\svt{(\voh,\tauth)}\svt},~\mbox{ for all}~ \vth\in\Vth.
\end{equation}
Moreover, the following coercivity condition holds on the whole space: 
\begin{equation}\label{weak::cor}
\svt{(\vo_h,\taut_h)}\svt^2\lesssim a(\vo_h,\taut_h;\vo_h,\taut_h),~\mbox{ for all }\, (\voh,\tauth)\in\Voh\times\Sigth. 
\end{equation}
% To obtain the optimal error estimate for $\uo$ in the $H^1$ norm and $\sigt$ in the $L^2$ norm, we define
% $$
% \svt{(\vo,\taut)}\svt:=\|\vo\|_{1,\Omo}+\|\taut\|_{0,\Omt},~\mbox{for all}~ (\vo,\taut)\in\Vo\times\Sigt,
% $$
% and it satisfies the coercivity condition
% \begin{equation}\label{weak::cor}
% \svt{(\vo,\taut)}\svt^2\lesssim a(\vo,\taut;\vo,\taut).    
% \end{equation}
% For any $\vth\in\Vth$, define
% $$
% |\vth|_{1,h,\Omt}=\|\varepsilon_h(\vth)\|_{0,\Omt}^2+\sum_{F\in\mathcal{F}_h^-}h^{-1}_F\|[\vth]\|^2_{0,F}
% $$
% By \cite[(3.1)]{MR3787386}, for all $\vth\in\Vth$, $$|\vth|_{1,h,\Omt}\lesssim\sup_{\tauth\in\Sigth}\frac{(\Div\tauth,\vth)_{\Omt}}{\|\tauth\|_{0,\Omt}}$$
% thus the following discrete inf-sup condition holds:
% \begin{equation}\label{weak::BB}
% |\vth|_{1,h,\Omt}\lesssim\sup_{(\voh,\tauth)\in\Voh\times\Sigth}\frac{b(\voh,\tauth;\vth)}{\svt{(\voh,\tauth)}\svt}, \qquad \text{for all}\, \vth\in\Vth.
% \end{equation}
The stability follows from the coercivity condition \eqref{weak::cor} and the discrete inf-sup condition \eqref{weak::BB}.

\begin{lemma}\label{weak:stablity}
It follows that for any $(\uoh,\sigth,\uth)\in\Voh\times\Sigth\times\Vth$,
\begin{equation}\label{weak:stablityeq} 
\svt{(\uoh,\sigth)}\svt+|\uth|_{1,h,\Omt}\lesssim\sup_{(v_h^+,\tau_h^-,v_h^-)\in\Voh\times\Sigth\times\Vth}\frac{\mathbb{A}(\uoh,\sigth,\uth;v_h^+,\tau_h^-,v_h^-)}{\svt{(v_h^+,\tau_h^-)}\svt+|v_h^-|_{1,h,\Omt}}
\end{equation}
with 
$\mathbb{A}(\uoh,\sigth,\uth;v_h^+,\tau_h^-,v_h^-):=a(\uoh,\sigth;v_h^+,\tau_h^-) + b(v_h^+,\tau_h^-; \uth)+b(\uoh,\sigth; v_h^-).$
\end{lemma}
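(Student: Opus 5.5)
Denote the supremum on the right-hand side of \eqref{weak:stablityeq} by $S$. The plan is the standard Babu\v{s}ka--Brezzi argument for a saddle point system that is coercive on the whole space. We combine the coercivity \eqref{weak::cor}, which holds on all of $\Voh\times\Sigth$ and not only on the kernel, with the weak inf-sup condition \eqref{weak::BB}, and use two test functions.

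\textbf{Step 1 (diagonal test).} Take $(v_h^+,\tau_h^-,v_h^-)=(\uoh,\sigth,-\uth)$. The two $b$-terms cancel: $b(\uoh,\sigth;\uth)-b(\uoh,\sigth;\uth)=0$. The skew interface terms $\langle\sigth\nt,\uoh\rangle_\Ga-\langle\sigth\nt,\uoh\rangle_\Ga$ inside $a$ also cancel. Hence
\[
\mathbb{A}(\uoh,\sigth,\uth;\uoh,\sigth,-\uth)=a(\uoh,\sigth;\uoh,\sigth)\gtrsim\svt{(\uoh,\sigth)}\svt^2
\]
by \eqref{weak::cor}. The norm of this test function is $\svt{(\uoh,\sigth)}\svt+|\uth|_{1,h,\Omt}$, so
\[
\svt{(\uoh,\sigth)}\svt^2\lesssim S\big(\svt{(\uoh,\sigth)}\svt+|\uth|_{1,h,\Omt}\big).
\]

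\textbf{Step 2 (control of $\uth$).} By \eqref{weak::BB} there is a pair $(w_h^+,\eta_h^-)$, with $\svt{(w_h^+,\eta_h^-)}\svt=1$, such that $|\uth|_{1,h,\Omt}\lesssim b(w_h^+,\eta_h^-;\uth)$. One may take $w_h^+=0$, since $b$ does not depend on $v^+$. Testing with $(w_h^+,\eta_h^-,0)$ gives
\[
b(w_h^+,\eta_h^-;\uth)=\mathbb{A}(\uoh,\sigth,\uth;w_h^+,\eta_h^-,0)-a(\uoh,\sigth;w_h^+,\eta_h^-)\le S+|a(\uoh,\sigth;w_h^+,\eta_h^-)|.
\]

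\textbf{Main obstacle: continuity of $a$ in the mesh-dependent norm.} We need $|a(\voh,\tauth;\woh,\etath)|\lesssim\svt{(\voh,\tauth)}\svt\,\svt{(\woh,\etath)}\svt$ on the discrete spaces, where $\woh,\etath$ stand for generic discrete test functions. The volume terms are immediate. The interface terms $\langle\tauth\nt,\voh\rangle_\Ga$ must be bounded without using $\Div\tauth$. Each face $F\in\FhGa$ lies in $\Ft$, so Cauchy--Schwarz and the weighted face part of $\|\cdot\|_{0,h,\Omt}$ give
\[
|\langle\tauth\nt,\voh\rangle_\Ga|\le\Big(\sum_{F\in\FhGa}h_F\|\tauth\nt\|_{0,F}^2\Big)^{1/2}\Big(\sum_{F\in\FhGa}h_F^{-1}\|\voh\|_{0,F}^2\Big)^{1/2}.
\]
The second factor needs $h_F^{-1}\|\voh\|_{0,F}^2\lesssim\|\voh\|_{1,\Omo}^2$ summed over interface faces. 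The plan is to handle it with a scaled trace inequality on $K\in\ToGa$,
\[
h_F^{-1}\|\voh\|_{0,F}^2\lesssim h_K^{-2}\|\voh\|_{0,K}^2+|\voh|_{1,K}^2.
\]
This is exactly where an $h^{-1}$ loss could appear, because $\voh$ need not vanish on $\Ga$. If that route is lossy, the fallback is: use only the $\|\tauth\|_{0,h,\Omt}$ weight, together with a trace/Poincar\'e-type bound for $\voh$ near $\Ga$ that exploits $\voh=0$ on $\Gao$ and Korn \eqref{kroncon} under Assumption~\ref{Ass:kroncon}. I expect that the intended argument either regards this continuity as already available, or controls $h_F^{-1/2}\|\voh\|_{0,F}$ by the stronger norm. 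It is the one genuinely nontrivial point.

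\textbf{Conclusion.} Granting the continuity bound, Step 2 gives $|\uth|_{1,h,\Omt}\lesssim S+\svt{(\uoh,\sigth)}\svt$. Inserting this into Step 1 yields
\[
\svt{(\uoh,\sigth)}\svt^2\lesssim S\,\svt{(\uoh,\sigth)}\svt+S^2.
\]
Young's inequality then gives $\svt{(\uoh,\sigth)}\svt\lesssim S$, and hence also $|\uth|_{1,h,\Omt}\lesssim S$, which is \eqref{weak:stablityeq}.
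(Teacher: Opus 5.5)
Your Steps 1 and 2 are correct as far as they go, and they are the Babu\v{s}ka--Brezzi argument the paper has in mind; the paper's only justification is the sentence that the lemma follows from \eqref{weak::cor} and \eqref{weak::BB}. You have also located the missing ingredient correctly. Step 2 needs $|a(\uoh,\sigth;0,\eta_h^-)|\lesssim\svt{(\uoh,\sigth)}\svt$, which in essence means $|\langle\eta_h^-\nt,\uoh\rangle_\Ga|\lesssim\|\varepsilon(\uoh)\|_{0,\Omo}$ for the $\eta_h^-$ supplied by \eqref{weak::BB}.

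That bound is false, so neither of your routes can produce it. The face weight in $\|\tauth\|_{0,h,\Omt}$ only controls $h_F^{1/2}\|\tauth\nt\|_{0,F}$, so you must bound $h_F^{-1/2}\|\uoh\|_{0,F}$. Korn \eqref{kroncon} with $\uoh|_{\Gao}=0$ only gives $\|\uoh\|_{0,\Ga}\lesssim\|\varepsilon(\uoh)\|_{0,\Omo}$, which is a factor $h^{-1/2}$ short, and that factor is genuinely lost.

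To see this, let $\overline{\Omt}\subset\Omega$ (so $\partial\Omt=\Ga$) with $|\Ga|$ bounded below, e.g.\ $\Omt=(0.25,0.75)^2$ as in the first numerical example. Fix $0\neq c\in\bR^n$ and a smooth $\phi$ with compact support in $\Omega$ and $\phi\equiv c$ near $\overline{\Omt}$. Let $\uoh:=I_h\phi$ be its Lagrange interpolant. Then $\|\varepsilon(\uoh)\|_{0,\Omo}\lesssim\|\phi\|_{2,\Omega}$, and $\uoh=c$ on $\Ga$ for small $h$. The constant $c$ lies in $\Vth$, and every boundary face of $\mathcal{T}_h^-$ lies on $\Ga$, so
\[
|c|_{1,h,\Omt}^2=\sum_{F\in\FhGa}h_F^{-1}|F|\,|c|^2\ge h^{-1}|\Ga|\,|c|^2 .
\]
Hence \eqref{weak::BB} yields $\eta_h^-$ with $\|\eta_h^-\|_{0,h,\Omt}=1$ and
\[
\langle\eta_h^-\nt,\uoh\rangle_\Ga=(\Div\eta_h^-,c)_\Omt\gtrsim(|\Ga|/h)^{1/2}|c| .
\]
This is exactly the pairing Step 2 needs to control, and it is unbounded as $h\to0$.

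The same data show that no sharper estimate can close the gap, because \eqref{weak:stablityeq} itself fails. Take $(\uoh,\sigth,\uth)=(I_h\phi,0,c)$. For every test triple, $b(\voh,\tauth;c)=(\Div\tauth,c)_\Omt=\langle\tauth\nt,c\rangle_\Ga$ exactly cancels the term $-\langle\tauth\nt,\uoh\rangle_\Ga$ in $a$, and $b(\uoh,0;\vth)=0$. Therefore
\[
\mathbb{A}(\uoh,0,c;\voh,\tauth,\vth)=(\Al^{-1}\varepsilon(\uoh),\varepsilon(\voh))_\Omo ,
\]
so the right-hand side of \eqref{weak:stablityeq} is $\lesssim\|\phi\|_{2,\Omega}$ uniformly in $h$. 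The left-hand side, however, is at least $|c|_{1,h,\Omt}\ge(|\Ga|/h)^{1/2}|c|\to\infty$.

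The culprit is the norm. Since $[\vth]=\vth$ on faces of $\Ga$, $|\cdot|_{1,h,\Omt}$ penalizes $\uth$ on $\Ga$ as if it had to vanish there, whereas the coupled problem only forces $\uth\approx\uoh$. Your ``granting the continuity bound'' therefore grants something false, and the paper's one-line justification tacitly relies on the same bound. A valid stability result would need a different norm, for instance one whose $\Ga$-face terms measure the mismatch between $\uth$ and $\uoh$, or extra restrictions on $\Ga$. It cannot follow from \eqref{weak::cor} and \eqref{weak::BB} alone.
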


Let $\Ioh:H^1(\Omo;\Rn)\to\Voh$ be the Scott-Zhang interpolation operator \cite{MR1011446}. For any $\uo\in H^{k+2}(\Omo;\Rn)$, it follows from \cite[Theorem 4.1]{MR1011446} that $\Ioh$ satisfies the following error estimate,
\begin{equation}\label{Ioherr}
\Big(\sum_{K\in\To}h_{K}^{2(m-k-2)}\|\Ioh\uo-\uo\|_{m,K}^2\Big)^{1/2}\lesssim \|\uo\|_{k+2,\Omo},~\mbox{ for }\,m=0,1.
% \|\Ioh\uo-\uo\|_{0,\Omo}+h\|\Ioh\uo-\uo\|_{1,\Omo}\lesssim h^{k+2}\|\uo\|_{k+2,\Omo}.
\end{equation}
Let $\Pth:L^2(\Omt;\Rn)\to\Vth$ denote the $L^2$ orthogonal projection, and let $\Pits:H^1(\Omt;\Snn)\to\Sigth$ be the interpolation operator defined in~\cite[Remark 3.1]{MR3352360}. These operators satisfy the following commuting property:
\begin{equation}\label{weak:PitsPro}
    \Div(\Pits\sigt)=\Pth(\Div\sigt).
\end{equation}
Moreover, if $\sigt\in H^{k+1}(\Omt;\mathbb{S})$, then $\Pits$ admits the interpolation error estimate
\begin{equation}\label{pitserr}
\|\Pits\sigt-\sigt\|_{0,\Omo}+h\|\Pits\sigt-\sigt\|_{1,\Omo}\lesssim h^{k+1}\|\sigt\|_{k+1,\Omt}.
\end{equation}

The following lemma characterizes the interpolation error of the exact solution with respect to the bilinear form $a(\cdot,\cdot)$.
\begin{lemma}
For $\uo\in H^{k+2}(\Omo;\mathbb{R}^n)$ and $\sigt\in H^{k+1}(\Omt;\mathbb{S})$, it holds that
\begin{equation}\label{weak::aestimate}
\begin{aligned}
     \sup_{(\voh,\tauth)\in\Voh\times\Sigth}&\frac{a(\Ioh\uo-\uo,\Pits\sigt-\sigt;\voh,\tauth)}{\svt(\voh,\tauth)\svt}\\
     &\qquad\qquad\lesssim h^{k+1}(\|\uo\|_{k+2,\Omo}+\|\sigt\|_{k+1,\Omt}).
\end{aligned}
\end{equation}
\end{lemma}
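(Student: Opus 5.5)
Write $e^+:=\Ioh\uo-\uo$ and $e^-:=\Pits\sigt-\sigt$. By \eqref{define:ab} the numerator splits into four terms:
\begin{equation*}
a(e^+,e^-;\voh,\tauth)=(\Al^{-1}\varepsilon(e^+),\varepsilon(\voh))_\Omo+(\Al e^-,\tauth)_\Omt+\langle e^-\nt,\voh\rangle_\Ga-\langle\tauth\nt,e^+\rangle_\Ga .
\end{equation*}
I will bound each term by $h^{k+1}(\|\uo\|_{k+2,\Omo}+\|\sigt\|_{k+1,\Omt})\,\svt(\voh,\tauth)\svt$. The mesh-dependent norm $\|\cdot\|_{0,h,\Omt}$ is what lets us handle the interface terms without any $H(\Div)$ control of $\tauth$.

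The two volume terms are direct. By Cauchy--Schwarz, the boundedness of $\Al$ and $\Al^{-1}$, and \eqref{Ioherr} with $m=1$, the first term is at most $h^{k+1}\|\uo\|_{k+2,\Omo}\|\varepsilon(\voh)\|_{0,\Omo}$. By \eqref{pitserr}, the second term is at most $h^{k+1}\|\sigt\|_{k+1,\Omt}\|\tauth\|_{0,\Omt}$.

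For the last term, $-\langle\tauth\nt,e^+\rangle_\Ga$, I sum over $F\in\FhGa$ and weight by $h_F$:
\begin{equation*}
|\langle\tauth\nt,e^+\rangle_\Ga|\le\Big(\sum_{F\in\FhGa}h_F\|\tauth n_F\|_{0,F}^2\Big)^{1/2}\Big(\sum_{F\in\FhGa}h_F^{-1}\|e^+\|_{0,F}^2\Big)^{1/2}.
\end{equation*}
The first factor is bounded by $\|\tauth\|_{0,h,\Omt}$ by definition, since $\FhGa\subset\Ft$. For the second factor I apply the scaled trace inequality $\|w\|_{0,F}^2\lesssim h_K^{-1}\|w\|_{0,K}^2+h_K\|\nabla w\|_{0,K}^2$ on the element $K\in\ToGa$ that contains $F$. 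This gives
\begin{equation*}
\sum_{F\in\FhGa}h_F^{-1}\|e^+\|_{0,F}^2\lesssim\sum_{K\in\To}\big(h_K^{-2}\|e^+\|_{0,K}^2+\|e^+\|_{1,K}^2\big),
\end{equation*}
and \eqref{Ioherr} with $m=0,1$ bounds the right-hand side by $h^{2(k+1)}\|\uo\|_{k+2,\Omo}^2$. Shape regularity, which gives $h_F\simeq h_K$, lets the local weights $h_K$ be replaced by the global $h$.

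The third term, $\langle e^-\nt,\voh\rangle_\Ga$, is the main obstacle, because the available control on $\voh$ is only $\|\varepsilon(\voh)\|_{0,\Omo}$. The plan is to bound it by $\|e^-\|_{0,\Ga}\|\voh\|_{0,\Ga}$ and treat the two factors separately.
\begin{itemize}
\item For $e^-$, the scaled trace inequality on $K\in\TtGa$ together with \eqref{pitserr} gives $\|e^-\|_{0,\Ga}^2\lesssim\sum_{K\in\TtGa}(h^{-1}\|e^-\|_{0,K}^2+h\|e^-\|_{1,K}^2)\lesssim h^{2k+1}\|\sigt\|_{k+1,\Omt}^2$.
\item For $\voh$, the global trace theorem on each connected component of $\Omo$ and Korn's inequality \eqref{kroncon} give $\|\voh\|_{0,\Ga}\lesssim\|\voh\|_{1,\Omo}\lesssim\|\varepsilon(\voh)\|_{0,\Omo}$. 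Assumption \ref{Ass:kroncon} keeps this trace constant independent of $h$.
\end{itemize}
This route only yields $h^{k+1/2}$, so half an order is missing. To recover it I will use the definition of $\Pits$ from \cite[Remark 3.1]{MR3352360}. Its face degrees of freedom make $e^-\nt$ orthogonal on each $F\in\FhGa$ to $\bP_{k-1}(F;\Rn)$ (or to $\bP_k(F;\Rn)$, if the interpolation preserves face moments up to degree $k$). I then insert the face-wise $L^2$ projection of $\voh$ onto this polynomial space. Since the orthogonality kills the projected part, a local estimate gives
\begin{equation*}
|\langle e^-\nt,\voh\rangle_F|\lesssim\|e^-\|_{0,F}\,h_F^{1/2}|\voh|_{1,K},
\end{equation*}
with $K\in\ToGa$ the element containing $F$. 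Summing over $F\in\FhGa$ gives $h^{k+1/2}\cdot h^{1/2}\|\varepsilon(\voh)\|_{0,\Omo}$ after Korn. If $\Pits$ does not provide this face orthogonality, I will instead use $e^-\nt$ and the fact that $\Div e^-=\Pth\Div\sigt-\Div\sigt$ is orthogonal to $\Vth$ by \eqref{weak:PitsPro}. Integrating by parts on $\TtGa$ against a discrete extension of $\voh|_\Ga$ into $\Omt$ then converts the interface term into volume terms of order $h^{k+1}$.

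Collecting the four bounds and dividing by $\svt(\voh,\tauth)\svt$ gives \eqref{weak::aestimate}.
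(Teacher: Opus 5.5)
Your proposal is correct, and its skeleton matches the paper's proof. The four-term splitting, the direct bounds on the two volume terms, and the treatment of $\langle\tauth\nt,\Ioh\uo-\uo\rangle_\Ga$ (by $h_F$-weighted Cauchy--Schwarz, a scaled trace inequality on $\ToGa$, and \eqref{Ioherr} with $m=0,1$) are all the same as in the paper.

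The routes differ only in the key term $\langle(\Pits\sigt-\sigt)\nt,\voh\rangle_\Ga$.
\begin{itemize}
\item The paper subtracts, on each $K^+\in\ToGa$, the rigid-motion projection $\pi\voh$. It uses $((\Pits\sigt-\sigt)\nt,\pi\voh)_F=0$ from \cite[Lemma 3.1]{MR3352360} and then the \emph{elementwise} Korn-type bound $h_{K^+}^{-1}\|\voh-\pi\voh\|_{0,K^+}+|\voh-\pi\voh|_{1,K^+}\lesssim\|\varepsilon(\voh)\|_{0,K^+}$. The estimate is therefore purely local, and this lemma never invokes the global Korn inequality.
\item You subtract a face polynomial (only constants are actually needed) and obtain $h_F^{1/2}|\voh|_{1,K}$. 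You convert to $\|\varepsilon(\voh)\|_{0,\Omo}$ only after summing, via \eqref{kroncon}. This is legitimate because Assumption~\ref{Ass:kroncon} is in force.
\end{itemize}
The trade-off is this. The paper uses a stronger interpolation property (orthogonality to rigid motions) so that the argument stays local and independent of the geometry of $\Omo$. Your version needs only vanishing face means of the normal-trace error, but relies on an $h$-uniform global Korn constant. One of the two ingredients is unavoidable: with only constants subtracted, no elementwise bound in terms of $\|\varepsilon(\voh)\|_{0,K}$ can hold, because $\voh$ may coincide with an infinitesimal rotation on $K$.

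Two corrections. First, what \cite[Lemma 3.1]{MR3352360} provides, and what the paper uses, is orthogonality of $(\Pits\sigt-\sigt)\nt$ on each face to rigid motions, i.e.\ to polynomials of degree at most one on $F$. You should not assume orthogonality to $\bP_{k-1}(F;\Rn)$, let alone $\bP_k(F;\Rn)$. If you project $\voh$ onto $\bP_{k-1}(F;\Rn)$, the sentence ``the orthogonality kills the projected part'' is unjustified. Project onto constants (or onto rigid motions restricted to $F$) instead, and your local estimate and summation go through unchanged.

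Second, drop the integration-by-parts fallback. It is unnecessary, and as sketched it is not clear that it closes. An extension of $\voh|_\Ga$ supported in the layer $\TtGa$ has $H^1$ seminorm of size $h^{-1/2}\|\voh\|_{0,\Ga}$, which reintroduces exactly the half-order loss you were trying to remove. This matters especially when $\Omt$ is only $O(h)$ thick.
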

\begin{proof}
Since $\uo\in H^{k+2}(\Omo;\mathbb{R}^n)$ and $\sigt\in H^{k+1}(\Omt;\mathbb{S})$, $\langle\cdot,\cdot\rangle_{\Ga}$ can be regarded as the standard $L^2$ inner product $(\cdot,\cdot)_{\Ga}$. By the definition of $a(\cdot,\cdot)$,
\begin{equation}\label{weak::aestimate1}
 \begin{aligned}
|a(&\Ioh\uo-\uo,\Pits\sigt-\sigt;\voh,\tauth)|\\
 &\lesssim\|\varepsilon(\Ioh\uo)-\varepsilon(\uo)\|_{0,\Omo}\|\varepsilon(\voh)\|_{0,\Omo}+\|\Pits\sigt-\sigt\|_{0,\Omt}\|\tauth\|_{0,\Omt}\\
% &\lesssim\|\Ioh\uo-\uo\|_{1,\Omo}\|\voh\|_{1,\Omo}+\|\Pits\sigt-\sigt\|_{0,\Omt}\|\tauth\|_{0,\Omt}\\
&+|\big((\Pits\sigt-\sigt)\nt,\voh\big)_\Ga|+|\big(\tauth\nt,\Ioh\uo-\uo\big)_\Ga|.
\end{aligned}
\end{equation}
For any $F\in\FhGa$, there exists an element $\Ko\in\ToGa$, such that $F$ is a face of~$\Ko$. Define the local rigid motion space $R(\Ko):=\{v\in H^1(\Ko;\Rn)~:~\varepsilon(v)=0\}$. For any $v\in H^1(\Ko;\Rn)$, let~$\pi$ be the interpolation operator from $H^1(\Ko;\Rn)$ onto~$R(\Ko)$ defined by 
\begin{align*}
	 \int_{K^+}(v-\pi v)\text{d}x =0~\text{ and }~\int_{K^+}\nabla\times(v-\pi v)\text{d}x =0,
\end{align*}
here, $\nabla\times$ denotes the standard curl operator. The interpolation operator $\pi$ satisfies the following estimate (c.f.~(3.3)-(3.4) in~\cite{brener2004korns}):
\begin{equation*}
    h_{K^+}^{-1}\|v-\pi v\|_{0,K^+}\lesssim |v-\pi v|_{1,K^+}\lesssim \|\varepsilon(v-\pi v)\|_{0,K^+}\lesssim \|\varepsilon(v)\|_{0,K^+}.
\end{equation*}
This, combined with the result~$((\Pits\sigt-\sigt)\nt,\pi\voh)_F = 0$ in~\cite[Lemma $3.1$]{MR3352360}, the Cauchy-Schwarz inequality, and the trace theorem, yields %\cite[Theorem 1.6.6]{Barna2010The}
$$
\begin{aligned}
\big\vert\big((\Pits\sigt-\sigt)\nt,\voh\big)_F\big\vert&=\big\vert\big((\Pits\sigt-\sigt)\nt,\voh-\pi\voh\big)_F\big\vert\\
&\leq h_F^{\frac{1}{2}}\|(\Pits\sigt-\sigt)\nt\|_{0,F}\cdot h_F^{-\frac{1}{2}}\|\voh-\pi\voh\|_{0,F}\\
&\lesssim h_F^{\frac{1}{2}}\|(\Pits\sigt-\sigt)\nt\|_{0,F}\cdot\|\varepsilon(\voh)\|_{0,\Ko}.
% &\lesssim h^{k+1}|\sigt|_{k+1,\Kt}\|\varepsilon(\voh)\|_{0,\Ko}
\end{aligned}
$$
Summarizing the above results across all interfaces, we derive
\begin{equation}\label{ErrorGammaN}
\begin{aligned}
\big\vert\big(&(\Pits\sigt-\sigt)\nt,\voh\big)_\Ga\big\vert\\
&\leq \sum_{F\in\FhGa}\big\vert\big((\Pits\sigt-\sigt)\nt,\voh\big)_F\big\vert\\
% &\lesssim (\sum_{F\in\FhGa}h_F\|(\Pits\sigt-\sigt)\nt\|_{0,F}^2)^{\frac{1}{2}}(\sum_{\Ko\in\ToGa}\|\varepsilon(\voh)\|_{0,\Ko}^2)^{\frac{1}{2}}\\
&\lesssim \big(\|\Pits\sigt-\sigt\|_{0,h,\Omt}+h\|\Pits\sigt-\sigt\|_{1,h,\Omt}\big)\|\varepsilon(\voh)\|_{0,\Omo}. 
\end{aligned}
\end{equation}
% By applying the Cauchy-Schwarz inequality and the trace theorem \cite[Theorem 1.6.6]{Barna2010The}, we obtain
The same argument of the above inequality leads to
\begin{equation}
\label{ErrorIhu}
\begin{aligned}
|(\tauth\nt,&\Ioh\uo-\uo)_\Ga|\leq\sum_{F\in\FhGa}h_F^{\frac{1}{2}}\|\tauth\nt\|_{0,F}\cdot h_F^{-\frac{1}{2}}\|\Ioh\uo-\uo\|_{0,F}\\
&\lesssim \|\tauth\|_{0,h,\Omt}\big(\sum_{K\in\ToGa}h_K^{-2}\|\Ioh\uo-\uo\|_{0,K}^2+\|\Ioh\uo-\uo\|_{1,K}^2\big)^{1/2}.
\end{aligned}
\end{equation}
The last two equations, together with \eqref{weak::aestimate1} and the interpolation error estimates \eqref{Ioherr} and \eqref{pitserr}, lead to the desired estimate \eqref{weak::aestimate}.
\end{proof}
% The following theorem provides the optimal estimation of $\uo$ in $H^1$ norm and $\sigt$ in $L^2$ norm.
\begin{theorem}\label{Th:main2}
Let $(\uo,\sigt,\ut)\in\Vo\times\Sigt\times\Vt$ be the solution of problem~\eqref{mixform:two}, and~$(\uoh,\sigth,\uth)\in\Voh\times\Sigth\times\Vth$ be the discrete solution of \eqref{dismixform}. The following error estimate holds:
\begin{equation}\label{opterror}
 %\|\uo-\uoh\|_{1,\Omt}+\|\sigt-\sigth\|_{0,\Omt}\lesssim h^{k+1}(\|\uo\|_{k+2,\Omo}+\|\sigt\|_{k+1,\Omt}).
\svt{(\uo-\uoh,\sigt-\sigth)}\svt + |\Pth\ut-\uth|_{1,h,\Omt}\lesssim h^{k+1}(\|\uo\|_{k+2,\Omo}+\|\sigt\|_{k+1,\Omt}),
\end{equation}
provided that $\uo\in H^{k+2}(\Omo;\mathbb{R}^n)$ and $\sigt\in H^{k+1}(\Omt;\mathbb{S})$.
% Moreover, when $\Omo$ and $\Omt$ are convex, we have
Moreover, if $\Omega$ is convex, we have
\begin{equation}\label{opterroruo}
   \|\uo-\uoh\|_{0,\Omo} + \|\Pth\ut-\uth\|_{0,\Omt} \lesssim h^{k+2}(\|\uo\|_{k+2,\Omo}+\|\sigt\|_{k+1,\Omt}).
\end{equation}

\end{theorem}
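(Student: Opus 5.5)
The energy estimate \eqref{opterror} follows from the stability of Lemma~\ref{weak:stablity} applied to discrete errors against interpolants. Set $e^+:=\Ioh\uo-\uoh$, $e_\sigma:=\Pits\sigt-\sigth$ and $e^-:=\Pth\ut-\uth$. Since the continuous formulation \eqref{mixform:two} is consistent for discrete test functions, Galerkin orthogonality gives
\[
\mathbb{A}(e^+,e_\sigma,e^-;\voh,\tauth,\vth)=a(\Ioh\uo-\uo,\Pits\sigt-\sigt;\voh,\tauth)+b(\voh,\tauth;\Pth\ut-\ut)+b(\Ioh\uo-\uo,\Pits\sigt-\sigt;\vth).
\]
The second term vanishes because $\Div\tauth\in\Vth$ and $\Pth$ is the $L^2$ projection. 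The third term vanishes by the commuting property \eqref{weak:PitsPro}, since $(\Div(\Pits\sigt-\sigt),\vth)=(\Pth\Div\sigt-\Div\sigt,\vth)=0$. The remaining $a$-term is bounded by \eqref{weak::aestimate}. Hence \eqref{weak:stablityeq} yields $\svt{(e^+,e_\sigma)}\svt+|e^-|_{1,h,\Omt}\lesssim h^{k+1}(\|\uo\|_{k+2,\Omo}+\|\sigt\|_{k+1,\Omt})$. We conclude with the triangle inequality, using \eqref{Ioherr} and \eqref{pitserr}. For $\|\Pits\sigt-\sigt\|_{0,h,\Omt}$, the face terms are handled by a scaled trace inequality, $h_F\|\cdot\|_{0,F}^2\lesssim\|\cdot\|_{0,K}^2+h_K^2|\cdot|_{1,K}^2$.

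For \eqref{opterroruo} I would use a duality argument. Let $(w^+,\psi^-,w^-)$ solve \eqref{mixform:two} with data $g=(\uo-\uoh)$ on $\Omo$ and $g=-(\Pth\ut-\uth)$ on $\Omt$, with signs chosen so the load matches the error functional. Equivalently, $(w,\psi)$ solves the elasticity problem on the convex domain $\Omega$, so $\|w\|_{2,\Omega}+\|\psi\|_{1,\Omega}\lesssim\|g\|_{0,\Omega}$. By symmetry of $\mathbb{A}$ up to the sign structure, the error functional $(\uo-\uoh,g)_{\Omo}+(e^-,g)_{\Omt}$ can be written as $\mathbb{A}(\uo-\uoh,\sigt-\sigth,\cdot\,;w^+,\psi^-,w^-)$ plus the term $(\Pth\ut-\ut,\cdot)$, which vanishes against piecewise polynomials. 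Subtracting the discrete interpolants $(I_h^{+,1}w^+,\Pi^-_{h,1}\psi^-,Q^-_{h,0}w^-)$ (lowest order) by Galerkin orthogonality leaves
\[
\mathbb{A}(\uo-\uoh,\sigt-\sigth,e^-;w^+-I w^+,\psi^--\Pi\psi^-,w^--Qw^-).
\]
The volume terms are bounded by the energy error times $h\,(\|w\|_2+\|\psi\|_1)$. The $b$-terms with $w^--Qw^-$ can be handled by the commuting property, since $\Div(\sigt-\sigth)=(I-\Pth)\Div\sigt$ up to discretization and the result is orthogonal to $\Vth$. The interface terms follow the pattern of \eqref{ErrorGammaN}–\eqref{ErrorIhu}, using rigid-motion orthogonality of the face moments.

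\textbf{Main obstacle.} The interface terms need $\langle(\sigt-\sigth)\nt,w^+-Iw^+\rangle_\Ga$ and $\langle(\psi^--\Pi\psi^-)\nt,\uo-\uoh\rangle_\Ga$ to be $O(h)$ times the energy error, which requires the mesh-dependent norm $\|\cdot\|_{0,h}$. In particular, $\|\sigt-\sigth\|_{0,h,\Omt}$ must already be controlled, and it is, through $\svt{\cdot}\svt$ in \eqref{opterror}. The second of these terms needs $h_F^{1/2}\|(\psi-\Pi\psi)n\|_{0,F}\lesssim h\|\psi\|_1$ combined with the rigid-motion trick on $\uo-\uoh$ elementwise, giving $O(h)\|\varepsilon(\uo-\uoh)\|_{0,\Omo}$. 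Also, with only $\psi\in H^1$, I would use a low-order interpolant of $\psi$ that still commutes. Its stability on $O(h)$-thick $\Omt$ is the delicate point, and I would try to take $\Pi\psi^-$ as the restriction of a global $H^1$-stable Hu--Zhang interpolant.

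Combining these bounds gives $\|\uo-\uoh\|_{0,\Omo}^2+\|e^-\|_{0,\Omt}^2\lesssim h\cdot h^{k+1}(\cdots)\|g\|_0$. Dividing through by $\|g\|_0$ yields \eqref{opterroruo}.
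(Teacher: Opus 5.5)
Your proof of \eqref{opterror} is the paper's argument: the same interpolant splitting, Galerkin orthogonality, the commuting property \eqref{weak:PitsPro} and $L^2$-orthogonality of $\Pth$ to remove the two $b$-terms, then Lemma~\ref{weak:stablity} with \eqref{weak::aestimate}; for \eqref{opterroruo} the paper only cites a standard duality argument, and your sketch is a correct elaboration of it. The point you flag as delicate is harmless, since you can take $\Pits\psi^-$ itself, which is defined on $H^1(\Omt;\Snn)$, commutes, and is local, so the thickness of $\Omt$ plays no role; moreover, because $e^-\in\Vth$, the term $(\Div(\psi^--\Pits\psi^-),e^-)_{\Omt}$ vanishes outright.
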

\begin{proof}
A subtraction \eqref{dismixform} from \eqref{mixform:two} yields
% yields the error equation
\begin{equation} \label{weak:errorequation}
\begin{cases}
     a(\uo-\uoh,\sigt-\sigth;\voh,\tauth)+b(\voh,\tauth;\ut-\uth)=0,\\
    b(\uo-\uoh,\sigt-\sigth;\vth)=0.
\end{cases}    
\end{equation}
%for all $(\voh,\tauth,\vth)\in\Voh\times\Sigth\times\Vth$.
The second equation of \eqref{weak:errorequation} and the commuting property \eqref{weak:PitsPro} imply
\begin{equation}\label{ab1}
   b(\Ioh\uo-\uoh,\Pits\sigt-\sigth;\vth)=b(\uo-\uoh,\sigt-\sigth;\vth)=0. 
\end{equation}
The first equation of \eqref{weak:errorequation} and the $L^2$ orthogonality of $\Pth$ give
$$
\begin{aligned}
b(\voh,\tauth;\Pth\ut-\uth)&=b(\voh,\tauth;\ut-\uth)
=-a(\uo-\uoh,\sigt-\sigth;\voh,\tauth).
\end{aligned}
$$
Consequently, 
\begin{equation}\label{ab2}
\begin{aligned}
     &a(\Ioh\uo-\uoh,\Pits\sigt-\sigth;\voh,\tauth)+b(\voh,\tauth;\Pth\ut-\uth)\\
    =&a(\Ioh\uo-\uo,\Pits\sigt-\sigt;\voh,\tauth).
\end{aligned}
\end{equation}
% Combining the last two equations, it holds that
A combination of the definition of $\mathbb{A}$ with \eqref{ab1} and \eqref{ab2}, yields
$$
\begin{aligned}
    \mathbb{A}(\Ioh\uo&-\uoh,\,\Pits\sigt-\sigth,\,\Pth\ut-\uth;\,\voh,\tauth,\vth)\\
    &=a(\Ioh\uo-\uoh,\Pits\sigt-\sigth;\voh,\tauth)+b(\voh,\tauth;\Pth\ut-\uth)\\
    &\quad +\,b(\Ioh\uo-\uoh,\Pits\sigt-\sigth;\vth)\\
    &=a(\Ioh\uo-\uo,\Pits\sigt-\sigt;\voh,\tauth),
\end{aligned}
$$
which, together with \eqref{weak:stablityeq} and \eqref{weak::aestimate}, implies
$$
\svt{(\Ioh\uo-\uoh,\Pits\sigt-\sigth)}\svt+|\Pth\ut-\uth|_{1,h,\Omt}\lesssim
h^{k+1}(\|\uo\|_{k+2,\Omo}+\|\sigt\|_{k+1,\Omt}).
$$
The estimate \eqref{opterror} follows from the result above together with the interpolation error estimates \eqref{Ioherr} and \eqref{pitserr}. The error estimate \eqref{opterroruo} can be derived by using the duality argument as in \cite{MR771029,MR1086845}.
\end{proof}

\begin{remark}
	The use of Lagrange element of order $k+1$ guarantees optimal convergence rates for all the three variables, whereas employing Lagrange elements of other orders may degrade the accuracy of certain variables, while still preserving the overall consistency of the coupling method.  
\end{remark}

\begin{remark}\label{postremark}
Based on the superconvergent results of $\uth$ in \eqref{opterror} and \eqref{opterroruo}, we can construct a superconvergent displacement from $(\sigth,\uth)$, using the post-processing algorithm  in \cite[Section 3.5]{MR3787386}.
%it is possible to improve the $L^2$ error of $u^-$ to the order $k + 2$. %there by obtaining an overall $L^2$ error estimate of $u$ at order $k + 2$. 
To be specific, let
$$
\Vthx=\{v\in L^2(\Omt,\Rn)~:~ v|_K\in \bP_{k+1}(K,\Rn),~\mbox{ for all }~K\in\Tt\}.
$$
The post-processing algorithm finds  $\uthx\in\Vthx$ and $\phi_h\in\Vth$, such that for all~$K\in\Tt$,
\begin{equation}\label{posteq}
\begin{aligned}
(\varepsilon(\uthx),\varepsilon(v))_K+(v,\phi_h)_K&=(\Al^{-1}\sigth,\varepsilon(v))_K,~~~\mbox{ for all }~v\in\Vthx|_K,\\
(\uthx,\psi)_K&=(\uth,\psi)_K,~~~~~~~~~~~\mbox{ for all }~\psi\in\Vth|_K.
\end{aligned} 
\end{equation}
It follows from the same argument in \cite[Theorem 3.15]{MR3787386} that
$$
|\ut-\uthx|_{1,h,\Omt}\lesssim h^{k+1}(\|\uo\|_{k+2,\Omo}+\|\ut\|_{k+2,\Omt}+\|\sigt\|_{k+1,\Omt}),
$$
provided that $\uo\in H^{k+2}(\Omo;\mathbb{R}^n)$, $\ut\in H^{k+2}(\Omt;\mathbb{R}^n)$ and $\sigt\in H^{k+1}(\Omt;\mathbb{S})$.
Moreover, when $\Omega$ is convex, it holds that
$$
\|\ut-\uthx\|_{0,\Omt} \lesssim h^{k+2}(\|\uo\|_{k+2,\Omo}+\|\ut\|_{k+2,\Omt}+\|\sigt\|_{k+1,\Omt}).
$$
\end{remark}
\section{Numerical Examples}\label{Sec:Num}
% In the following, we present two numerical examples using the hybrid method with Hu--Zhang and Lagrange elements to solve linear elasticity problems. The convergence order is verified for both a square domain and a unit circular domain. Notably, the second example demonstrates that the hybrid method can achieve optimal convergence rates, even when applied to more complex domains and boundary conditions.
This section presents numerical experiments to demonstrate the performance of the proposed coupling finite element method, with a particular focus on its enhanced robustness and improved stress field approximation. 

Unless otherwise specified, we employ uniform mesh refinement to generate the sequence of refined meshes $\{\mathcal{T}_i\}$ from an initial coarse mesh $\mathcal{T}_0$. For a problem with stress concentration, let the concentration set be a point, curve, surface, or subregion. Define the union of all elements in $\mathcal{T}_h$ intersecting this set by $\omega_1^-$, and the set of elements sharing a vertex with any element in $\omega_{i-1}^-$ by $\omega_i^-$.
For brevity, we introduce the notation
$$
\HZ_{k}(\Omt)+\CG_m(\Omo), \quad\text{and, in particular,}\quad \HZ_{k}(\omega_i^-)+\CG_m(\Omega\setminus\omega_i^-),
$$
to denote a coupled method employing the $P_k$ Hu--Zhang element on the subdomain $\Omt$ (with $\omega_i^-$ as a special case) and the $P_m$ Lagrange
element elsewhere.

\subsection{Convergence rates for homogeneous materials}\label{numerexam1}
Consider the linear elasticity problem \eqref{intro::linearEquation} for homogeneous material with parameters $\lambda=1.0$, $\mu=0.5$ on  domain $\Omega=\left(0,1\right)\times\left(0,1\right)$, which is decomposed into two nonoverlapping subdomains
$$
\Omt=\left(0.25,0.75\right)\times\left(0.25,0.75\right)\quad\mbox{and}\quad\Omo=\Omega\setminus\Omt.
$$ 
The initial mesh $\mathcal{T}_0$ consists of $32$ uniform triangular elements, obtained by cutting the unit square using horizontal, vertical and north-east lines.
%Each mesh $\mathcal{T}_i$ is refined into a half-sized mesh uniformly, to get a higher-level mesh $\mathcal{T}_{i+1}$.
The source term $f$ is determined by the exact solution  $u=(\sin\pi x\sin\pi y, \sin\pi x\sin\pi y)^T$.
\begin{figure}[htbp]
	\centering
	\includegraphics[trim=0 0 0 0 , clip, width=0.76\textwidth]{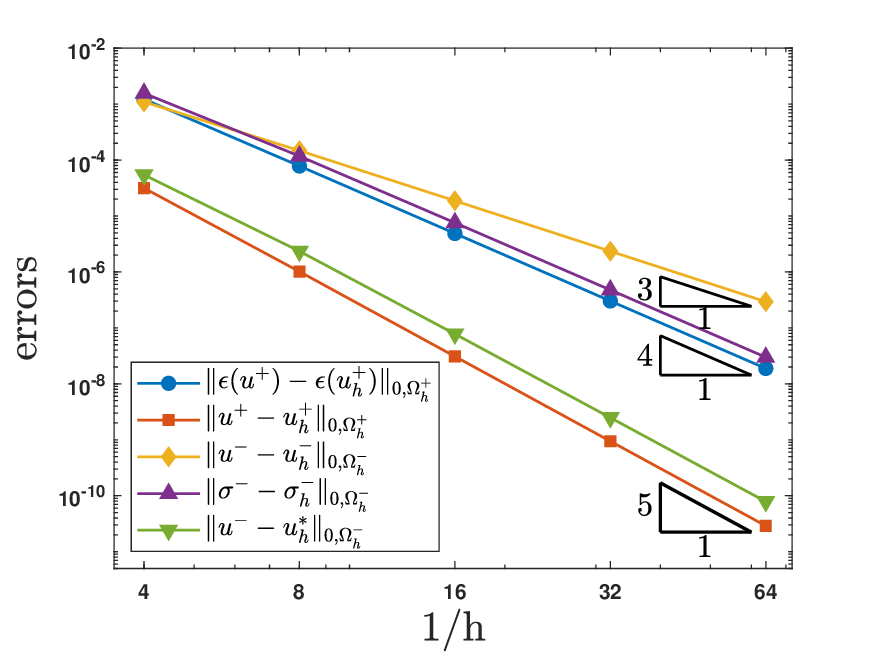}
	\caption{Convergence rates of the coupling method $\HZ_3(\Omega_h^-)+\CG_4(\Omega_h^+)$  for Example \ref{numerexam1}.}
	\label{exam1:fig:res}
\end{figure}

%\begin{figure}[!ht]
%	\centering
%        \subfloat{\includegraphics[trim=5 5 5 5, clip, height=0.2\textheight]{20012002URes.eps}}
%	\subfloat{\includegraphics[trim=5 5 5 5, clip, height=0.2\textheight]{20012002SRes.eps}}
%      \\
% 	\subfloat{\includegraphics[trim=5 5 5 5, clip, height=0.2\textheight]{example1diffLaOrderLagRes.eps}}
%	\subfloat{\includegraphics[trim=5 5 5 5, clip, height=0.2\textheight]{example1diffLaOrderHzuRes.eps}}
%        \\
%        \subfloat{\includegraphics[trim=5 5 5 5, clip, height=0.2\textheight]{example1diffLaOrderHzsRes.eps}}
%	\subfloat{\includegraphics[trim=5 5 5 5, clip, height=0.2\textheight]{example1diffLaOrderHzpostuRes.eps}}
%	\caption{Convergence rates of the coupling $P_3$ Hu--Zhang and $P_k(1\le k\le 5)$ Lagrange finite element method for Example 1.}
%	\label{exam1:fig:res}
%\end{figure}

Figure \ref{exam1:fig:res} presents the numerical results obtained by the $\HZ_3(\Omt) +\CG_4(\Omo)$ method.
The numerical convergence rates for
$\|u^+-u_h^+\|_{0,\Omo}$, $\|\epsilon(u^+)-\epsilon(u_h^+)\|_{0,\Omo}$,
and $\|\sigma^--\sigma_h^-\|_{0,\Omt}$ shown in Figure~\ref{exam1:fig:res} are
$\mathcal{O}(h^5)$, $\mathcal{O}(h^4)$, and $\mathcal{O}(h^4)$, respectively,
in agreement with Theorem~\ref{Th:main2}.
Furthermore, the numerical rate for $\|u^- -u_h^-\|_{0,\Omt}$ is
$\mathcal{O}(h^3)$, confirming the result of Theorem~\ref{Th:main1}.
Moreover, the superconvergence of $\|u^- - u^*_h\|_{\Omt}$ is also observed
in Figure \ref{exam1:fig:res}. Overall, the results demonstrate that the proposed coupling method with finite element spaces $\Sigth \times \Vth \times \Voh$, defined in \eqref{defSigth}--\eqref{defVoh}, achieves optimal convergence rates, consistent with the theoretical analysis.

%The convergence rate does not increase when $k$ is large enough because of the restriction of $P_3$ Hu--Zhang element, which also leads to a deteriorate convergence rate 5.40 for $\|\uo-\uoh\|_{0,\Omo}$ when $k=5$.
%The numerical convergence rates of $\|\ut-\uth\|_{0,\Omt}$ and $\|\sigt-\sigth\|_{0,\Omt}$ are $\mathcal{O}(h^{\min (k+1,3)})$ and $\mathcal{O}(h^{\min (k+0.5,4)})$, respectively. The convergence rate deteriorates for small $k$ since low order Lagrange element is applied, which also causes the deteriorate convergence rate of  $\|u^--u^*_h\|_{0,\Omt}$ when $k\le 3$. 
%The case $k=4$ in Figure \ref{exam1:fig:res} indicates that the pair of $\Sigth\times  \Vth\times  \Voh$  with \eqref{defSigth}-\eqref{defVoh} is optimal, which coincides with the theoretical results. 

%%%%%%%%%%%%%%%%%%%%%%%%%%%%%%%%%%%%%%%%%%%%%%%%%%%%%%%%%%%%%%%%%%%%%%%%%%%%%%%%%%%%%%%%%%%%%%%%%%%%%%%%%%%%%%%%%%%%%%%%%%%%%%%%%%%%%%%%%%%%%%%%%%%%%%%%%%%%%%%%%%%%%%%%%%%%%%%%%%%%%%%%%%%%%%%%%%%%%%%%%%%%%%%%%%%%%%%%%%%%%%%%%%%%%%%%%%%

% \subsection{A solution with a corner singularity}
\subsection{Convergence behavior for low regularity solutions}
Consider linear elasticity problem on L-shaped domain $\Omega=\left(-1,1\right)^2\setminus\big(\left(0,1\right)\times\left(-1,0\right)\big)$ with pure Dirichlet boundary condition.
The exact displacement $u =(u_1,u_2)^{\mathrm T}$ of this problem in polar coordinates~$(r, \theta)$ reads 
% The subdomain adjacent to $(0,0)$ is discretized with Hu--Zhang finite elements, as depicted in Figure \ref{fig:Ldomain}. 
% The exact solution $\bm u =(u_1,u_2)^{\mathrm T}$ is given in polar coordinates $(r, \theta)$ by
$$
\begin{aligned}
	u_1=\frac{1}{2 \mu} r^\gamma&\left(\cos\theta\left(-(1+\gamma)\cos((1+\gamma)\theta)+(\kappa-\gamma)Q\cos((1-\gamma)\theta)\right)\right. \\ - &\left.\sin\theta\left((1+\gamma)\sin((1+\gamma)\theta)-(\kappa+\gamma)Q\sin((1-\gamma)\theta)\right)\right), \\
	u_2=\frac{1}{2 \mu} r^\gamma&\left(\sin\theta\left(-(1+\gamma)\cos((1+\gamma)\theta)+(\kappa-\gamma)Q\cos((1-\gamma)\theta)\right)\right. \\ + &\left.\cos\theta\left((1+\gamma)\sin((1+\gamma)\theta)-(\kappa+\gamma)Q\sin((1-\gamma)\theta)\right)\right), \\
\end{aligned}
$$
with $\kappa=3-4 \nu$, Poisson ratio $\nu=\lambda /(2(\lambda+\mu))$, $Q=-\frac{\cos \left((\gamma+1) \frac{3 \pi}{4}\right)}{\cos \left((\gamma-1) \frac{3 \pi}{4}\right)}$, and $\gamma$ is the solution
of %the equation 
$
\sin \left(\gamma \frac{3 \pi}{2}\right)+\gamma \sin \left(\frac{3 \pi}{2}\right)=0,
$
where approximate values $\gamma=0.5444837367$ and $Q=0.5430755688$ are given in \cite{yi2022locking}.
%According to \cite{yi2022locking}, we choose the numerical values %of $\gamma$ and $Q$ as 
%$\gamma=0.5444837367$ and $Q=0.5430755688$. 
Let $\mu=1$, $\lambda=1$ and the source term~$f$ be determined by the exact solution. 
By~\cite{babuvska1987hp}, the solution $u \in [H^{1+\gamma-\varepsilon}(\Omega)]^2$ and $\sigma \in [H^{\gamma-\varepsilon}(\Omega)]^{2 \times 2}$ for $\varepsilon>0$. Thus, the global convergence rate of stress in the $L^2$ norm is approximately~0.54. Since the exact solution  exhibits singularity at the corner, let the stress concentration set be the point $(0,0)$.%Different orders of the Lagrange element and the Hu--Zhang element and various choices of the region using Hu--Zhang element result in different numerical behavior of coupling method. 
%For ease of presentation,  the notation $P_{k}(i) +P_m$ is adopted for the coupling method, which employs the $P_k$ Hu--Zhang element on the subdomain consisting of $i$ layers of elements surrounding the singularity at~$(0,0)$, and the $P_m$ Lagrange element on the remaining part. 
%The coupling methods $\HZ_{k}(\omega_i^-) +\CG_m(\omega_i^+)$ are adopted for this example.
\subsubsection{Resolution of stress singularity}

The initial mesh $\mathcal{T}_0$ consists of $12$ uniform right triangles, as
depicted in Figure~\ref{fig:Lshapesigma} (a)--(f). As shown in
Figure~\ref{fig:Lshapesigma}, the $\HZ_3(\omega_1^-)+\CG_1(\Omega\setminus\omega_1^-)$ method on $\mathcal{T}_0$ produces results comparable to those on the refined mesh $\mathcal{T}_4$, while the $P_1$ Lagrange method on $\mathcal{T}_0$ fails to capture the singular behavior.
This is further confirmed in Figure~\ref{fig:OriStress}, where the discrete stresses at $(0,0)$ clearly exhibit the singularity resolved by the coupling method. The results demonstrate the superiority of the proposed $\HZ_3(\omega_1^-)+\CG_1(\Omega\setminus\omega_1^-)$ method in resolving stress concentrations.

%The results of Figure~\ref{fig:Lshapesigma} shows that  the~$P_3(1)+P_1$ method resolves the singularity of the stress better than the~$P_1$ Lagrange element on $\mathcal{T}_0$, and exhibits a similar numerical phenomenon to the reference solution on $\mathcal{T}_4$, while  the~$P_1$ Lagrange element on $\mathcal{T}_0$ can not. A more pronounced singular behavior in the stress by the coupling method is observed with the discrete stresses at $(0,0)$ by both methods depicted in Figure~\ref{fig:OriStress}. This highlights the effectiveness of the proposed coupling method in resolving stress concentrations.

%To verify the convergence behavior of the proposed coupling method on the L-shape domain, the Hu–Zhang element is employed on the subdomain formed by the single or several layers of elements adjacent to the singularity at~$(0,0)$, while the remaining part of domain is discretized with the Lagrange element.
% In the L-shaped domain problem (Fig. \ref{fig:Ldomain}), we employ the $P_1$ Lagrange finite element in the blue subdomain and the $P_3$ Hu–Zhang mixed finite element in the red subdomain.
\begin{figure}[htbp]
	\centering
	\subfloat[$\sigma_{xx}$]{\includegraphics[trim=45 20 60 20, clip, width=0.31\linewidth]{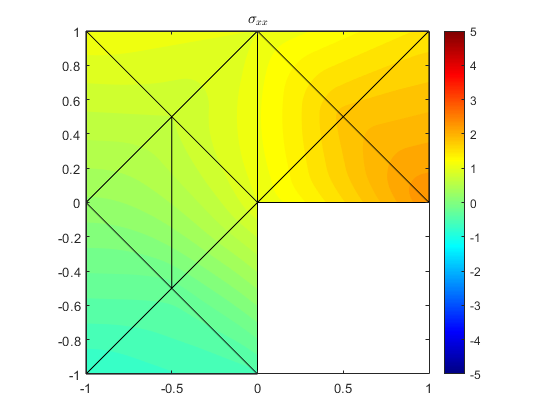}}
	\subfloat[$\sigma_{xy}$]{\includegraphics[trim=45 20 60 20, clip, width=0.31\linewidth]{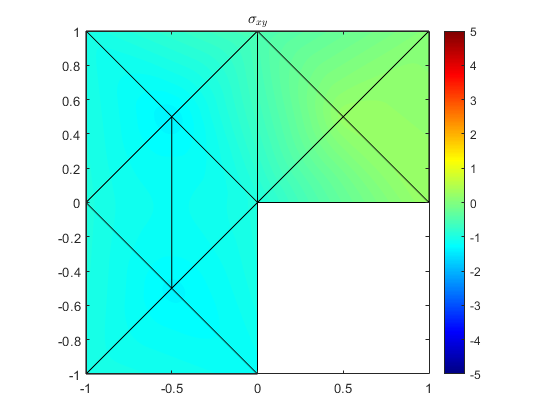}}
	\subfloat[$\sigma_{yy}$]{\includegraphics[trim=45 20 60 20, clip, width=0.31\linewidth]{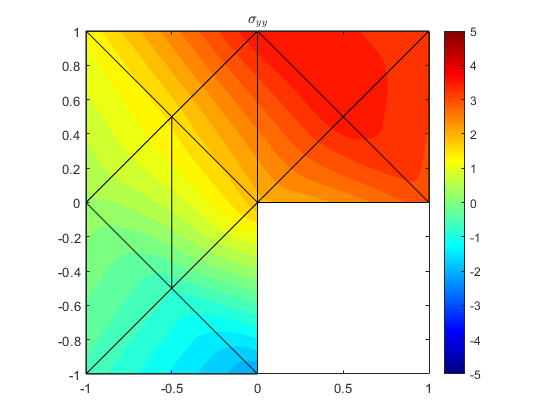}}
	
	\subfloat[$\sigma_{xx}$]{\includegraphics[trim=45 20 60 20, clip, width=0.31\linewidth]{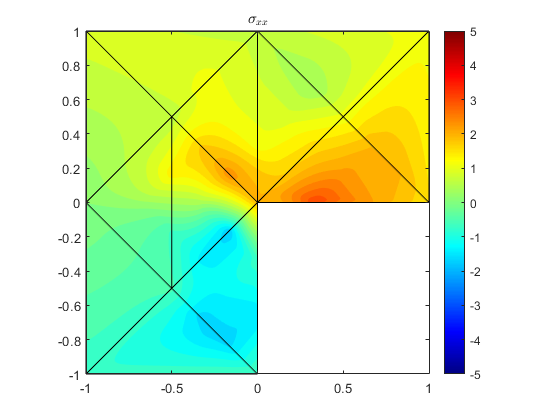}}
	\subfloat[$\sigma_{xy}$]{\includegraphics[trim=45 20 60 20, clip, width=0.31\linewidth]{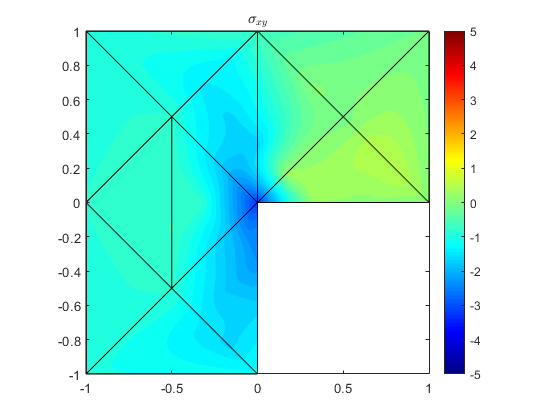}}
	\subfloat[$\sigma_{yy}$]{\includegraphics[trim=45 20 60 20, clip, width=0.31\linewidth]{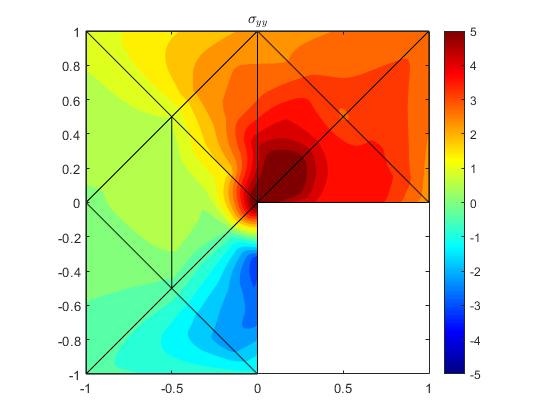}}
	
	\subfloat[$\sigma_{xx}$]{\includegraphics[trim=45 20 60 20, clip, width=0.31\linewidth]{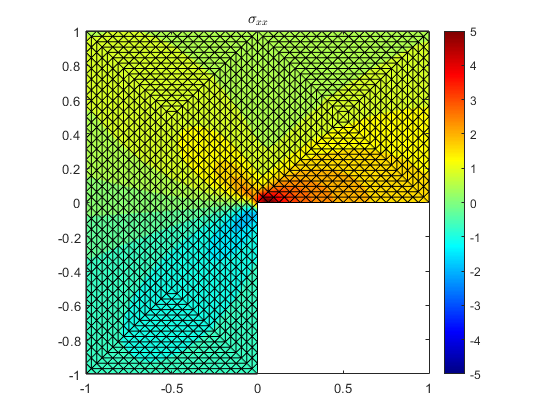}}
	\subfloat[$\sigma_{xy}$]{\includegraphics[trim=45 20 60 20, clip, width=0.31\linewidth]{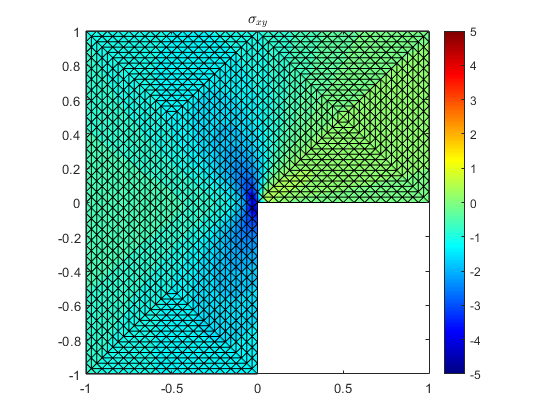}}
	\subfloat[$\sigma_{yy}$]{\includegraphics[trim=45 20 60 20, clip, width=0.31\linewidth]{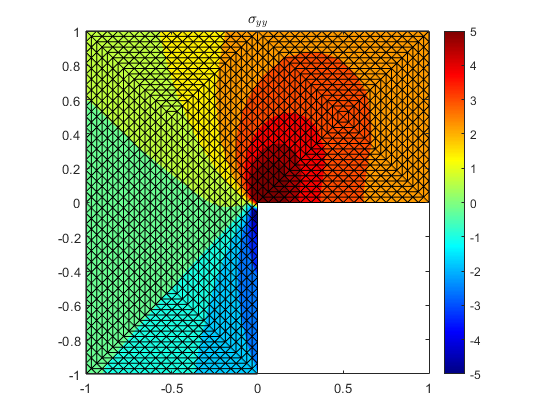}}
	\caption{Stress distributions for the $P_1$ Lagrange finite elment method on $\mathcal{T}_0$ (top), the coupling method $\HZ_3(\omega_1^-)+\CG_1(\Omega\setminus\omega_1^-)$ on $\mathcal{T}_0$ (middle), and the $P_1$ Lagrange finite element method on $\mathcal{T}_4$ (bottom) as reference.}
	\label{fig:Lshapesigma}
\end{figure}

\begin{figure}[htbp]
	\centering
	\includegraphics[trim=50 5 50 5, clip, width=0.63\textwidth]{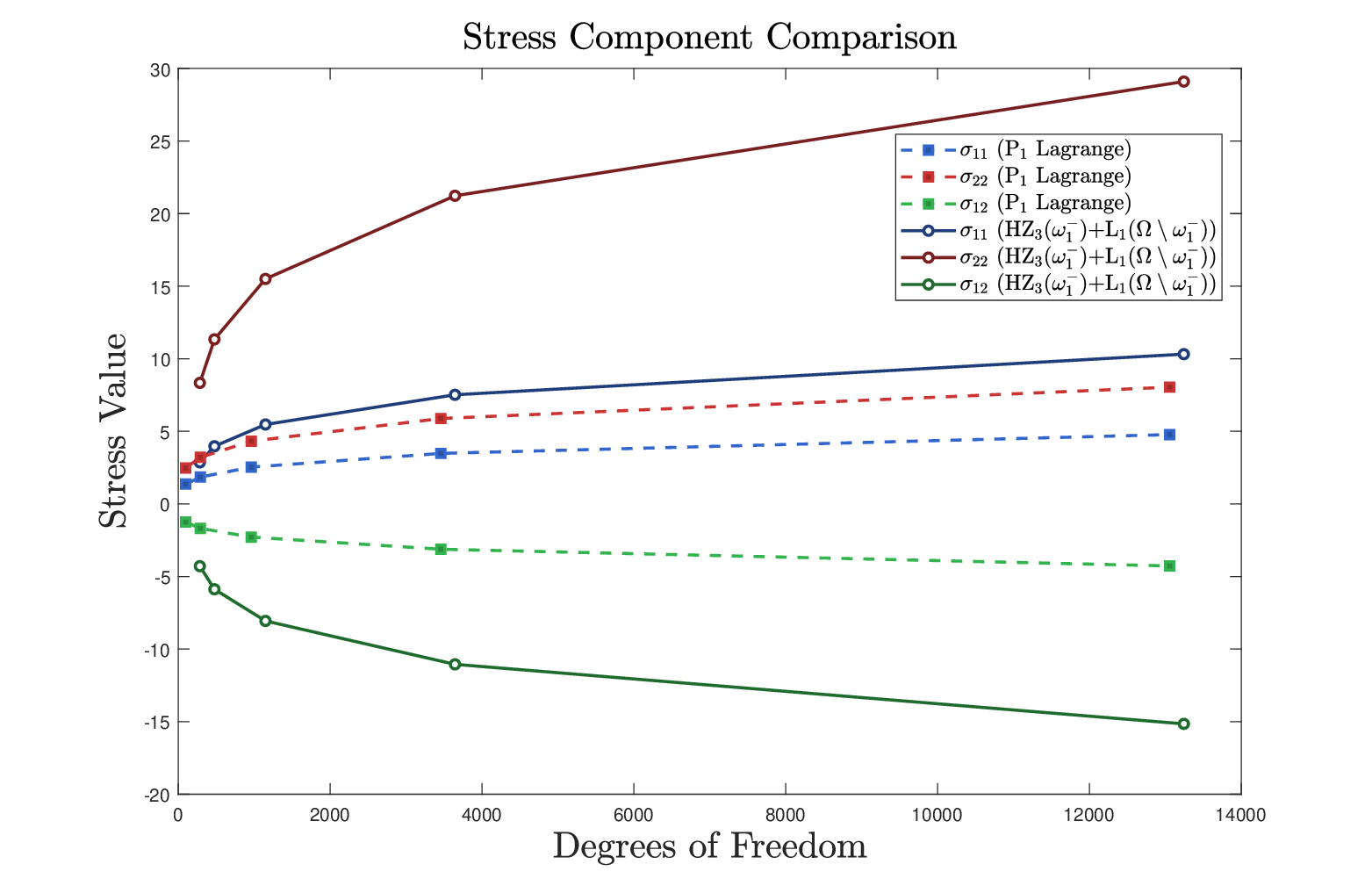}
	\caption{ Stress component comparison at point $(0,0)$.}
	\label{fig:OriStress}
\end{figure}

\subsubsection{Comparison of different coupling methods}

To assess the performance of different coupling strategies, we vary the layer number~$i$ and polynomial degrees $k$ and $m$ in the $\HZ_k(\omega_i^-) + \CG_m(\Omega \setminus \omega_i^-)$ method. Figure~\ref{fig:LshapeErr} compares the~$\HZ_3(\omega_i^-) + \CG_1(\Omega \setminus \omega_i^-)$ method (left) and $\HZ_k(\omega_1^-) + \CG_1(\Omega \setminus \omega_1^-)$ method (right). The numerical stress by all these methods converges at an approximate rate $0.54$, which coincides with the theoretical result. 
As the layer number $i$ increases, the error of stress by the~$\HZ_3(\omega_i^-)+\CG_1(\Omega\setminus\omega_i^-)$ method gradually decreases, and the error of the $P_1$ Lagrange finite element method is approximately three times larger than that of the $\HZ_3(\omega_5^-)+\CG_1(\Omega\setminus\omega_5^-)$ method on the mesh $\mathcal{T}_4$. This implies that a small region $\Omt$ of Hu--Zhang element can significantly improve stress accuracy.
In contrast, varying $k$ in $\HZ_k(\omega_1^-) + \CG_1(\Omega \setminus \omega_1^-)$ method has little impact due to the limited accuracy of the $P_1$ Lagrange approximation.

Furthermore, Table~\ref{Table:LResult1} reports the stress errors on
the finest mesh for the coupling method $\HZ_3(\omega_1^-)+\CG_m(\Omega\setminus\omega_1^-)$ and the $P_m$ Lagrange finite element method.
For a fixed $m$, the coupling method achieves consistently higher stress accuracy than the $P_m$ Lagrange element with comparable Dofs. %This implies that the coupling method can achieve a good balance between accuracy and computational efficiency, as it strategically employs Hu--Zhang and Lagrange elements to resolve regions with and without stress singularity, respectively.
This indicates that the coupling method provides an effective accuracy-efficiency trade-off by using Hu--Zhang elements in the stress concentration region and Lagrange elements elsewhere.

\begin{figure}[htbp]
	\centering
	\subfloat{\includegraphics[trim=10 0 30 0, clip, width=0.45\linewidth]{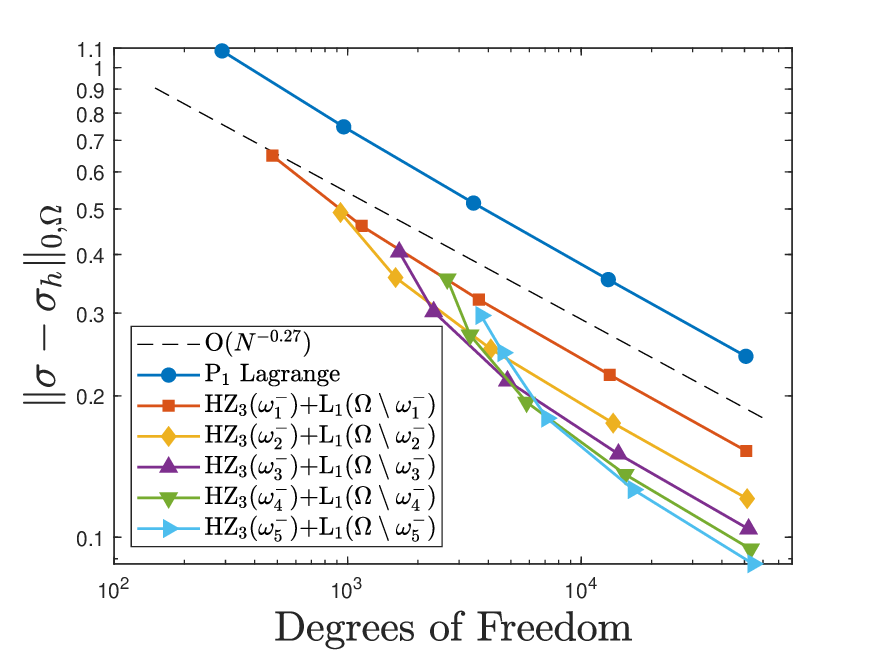}}
	\hspace{2.0em}
	\subfloat{\includegraphics[trim=10 0 30 0, clip, width=0.45\linewidth]{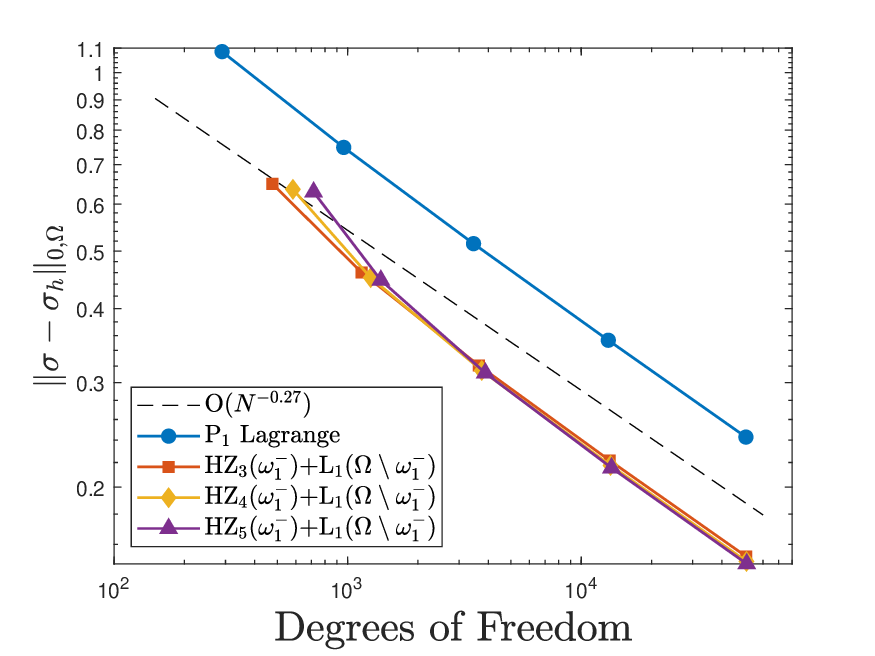}}
	\caption{Numerical comparison of stress errors for various methods.}
	\label{fig:LshapeErr}
\end{figure} 

\begin{table}[htbp]\centering
	\caption{Numerical results on the finest mesh obtained with the coupling method $\HZ_3(\omega_1^-)+\CG_m(\Omega\setminus\omega_1^-)$
	 and the $P_m$ Lagrange finite element method.}\label{Table:LResult1}
	\renewcommand{\arraystretch}{1.2} % 调整行高
	\begin{tabular}{c|ccccc}
		\hline
		$m$  & & $m=1$  & $m=2$  & $m=3$  & $m=4$  \\ \hline
		\multirow{2}{*}{$\HZ_3(\omega_1^-)+\CG_m(\Omega\setminus\omega_1^-)$} & Dofs                             & 13244  & 50860  & 113042 & 199790 \\
		& $\|\sigma-\sigma_h\|_{0,\Omega}$ & 0.2217 & 0.0834 & 0.0726 & 0.0723 \\ \hline
		\multirow{2}{*}{$P_m$ Lagrange}   & Dofs                             & 13058  & 50690  & 112898 & 199682 \\
		& $\|\sigma-\sigma_h\|_{0,\Omega}$ & 0.3537 & 0.1709 & 0.1189 & 0.0868 \\ \hline
	\end{tabular}
\end{table}
\subsection{Bimaterial Cook's membrane problem}
Consider a benchmark problem for a tapered plate composed of two materials with different mechanical properties~\cite{MR4056294,MR2574903}, which is an interface problem. The domain $\Omega$ occupied by the plate is the convex hull of four vertices $(0,0)$, $(48,44)$, $(48, 60)$ and $(0,44)$. A subdomain of~$\Omega$, which is the convex hull of vertices $(12,20.25)$, $(36,38.75)$, $(36, 50.25)$, is occupied by one material and denoted by $\Omo$, and the remaining domain is occupied by the other material and denoted by $\Omt$. Here, the boundary of $\Omo$ naturally forms the interface between the two materials.

The initial mesh is depicted in Figure~\ref{exam3:fig:cookuyres}. 
Although Assumption~\ref{Ass:kroncon} is not satisfied, the subsequent numerical results demonstrate that the proposed coupling method is still effective in handling such problems, as also observed in Example \ref{3DExam}. 
The subdomain $\Omo$ consists of a compressible and stiff material with  $E=250$ and $\nu=0.35$, whereas the nearly incompressible region $\Omt$ with $E=80$ and $\nu = 0.499999$. The plate is clamped at $x=0$ and loaded by a shear force $F=100$ on the right boundary, while the remaining boundaries are traction-free.
The inconsistent traction boundary conditions at the top-right and bottom-right corners lead to reduced solution regularity~\cite{MR4193452}. 
% In this section, the coupling~$P_3(\Omt)+P_k$ method denotes the coupling method of the $P_3$ Hu--Zhang element on $\Omt$ and the $P_k$ Lagrange element on $\Omo$.
%The initial three levels of triangulations are displayed in Figure \ref{exam3:fig:mesh}.
% where the subdomain $\Omo$ is represented in blue, $\Omt$ in red and the interface $\Gamma$ in black.
%\begin{figure}[htbp]
%    \centering
%    \subfloat{\includegraphics[trim=75 20 60 10, clip, width=0.25\linewidth]{BiCookMesh1.eps}} 
%    \caption{The initial mesh for the Cook’s membrane problem. Red region: Hu--Zhang subdomain $\Omt$; reminder: Lagrange subdomain $\Omo$.}
%    \label{exam3:fig:mesh}
%\end{figure}

%Due to inconsistent traction boundary conditions at the top-right corner of the plate, we adopt the partial $C^0$ vertex continuity relaxation method proposed in \cite{MR4193452} to modify the Hu--Zhang finite element space and ensure compatibility with prescribed boundary conditions.
% {\color{red}Furthermore, to address the reduced regularity of solution induced by the inconsistent traction boundary conditions, we employ the coupling $P_3$ Hu--Zhang and $P_k$ ($k=1,2,3$) Lagrange finite element method to solve the bimaterial Cook's membrane problem.}

As shown in Figure~\ref{exam3:fig:cookuyres}, the vertical displacement at the top-right corner of the plate by the coupling methods~$\HZ_3(\Omt)+\CG_m(\Omo)$ with $m=2$, $3$ and $4$ match well with the reference solution even on the coarsest mesh, while it  takes four levels of mesh refinements for the result by that with $m=1$ to converge.
For this benchmark problem with solution of low regularity, the~$\HZ_3(\Omt)+\CG_m(\Omo)$ method remains reliable for moderate $m$ but deteriorates for very low-order Lagrange discretizations. Figure~\ref{exam3:fig:cookres} shows the numerical solution obtained by $\HZ_3(\Omt) + \CG_2(\Omo)$, which captures the singular behavior and agrees with~\cite{MR4056294}.

% The evolution of the vertical displacement of the plate's top right corner is displayed in Figure \ref{exam3:fig:cookuyres} as a function of the number of mesh elements in the domain $\Omega$.
% The numerical solution obtained by the $P_3$ Hu--Zhang finite element method on the fifth refinement level is established as the reference solution.
% Notably, the coupling $P_3$ Hu--Zhang and $P_2/P_3$ Lagrange method achieves immediate agreement with the reference solution even on the coarsest mesh. In contrast, the cuopling $P_3$ Hu--Zhang and $P_1$ Lagrange method exhibits significant deviations on the initial mesh, but demonstrates progressive convergence toward the reference solution under mesh refinement.
% These behaviors confirms the coupling method's effectiveness in mitigating locking phenomena.
\begin{figure}[htbp]
	\centering
	\includegraphics[trim=75 20 60 10, clip, width=0.28\linewidth]{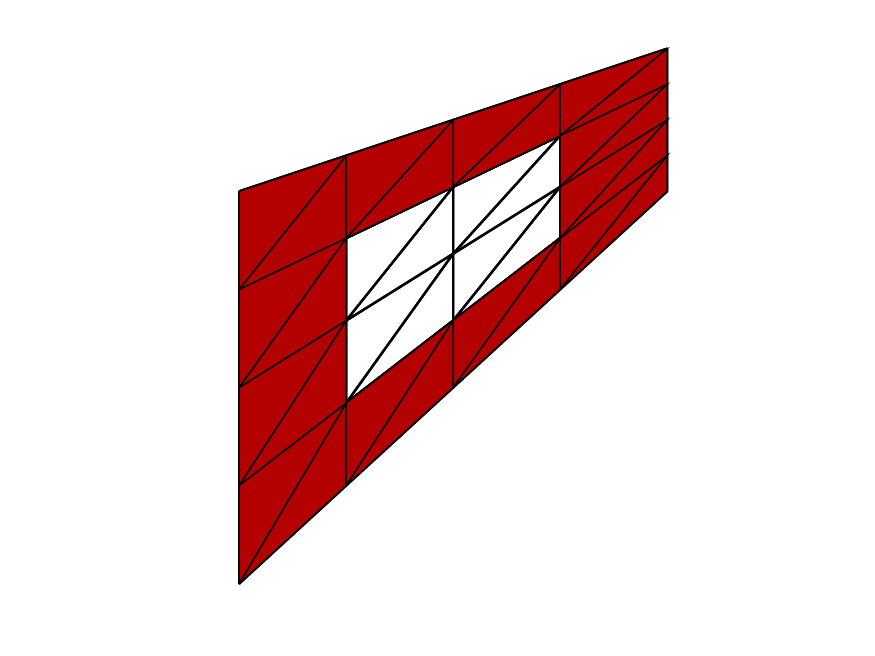}
    \includegraphics[trim=5 5 5 5, clip, width=0.55\linewidth]{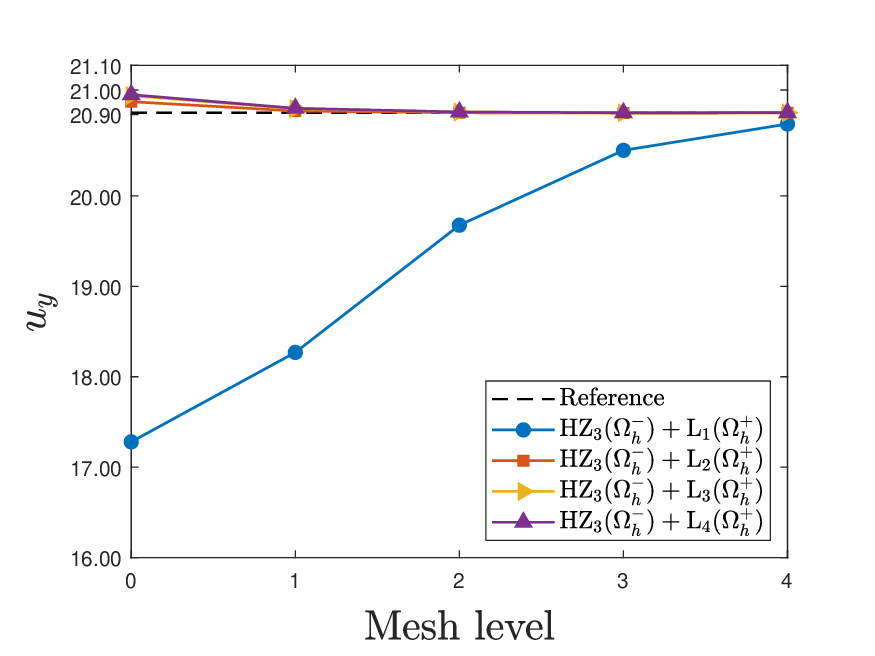}
	\caption{The initial mesh for Cook’s membrane problem (left), and vertical displacement at plate's top-right corner vs. the level of mesh refinement (right), where the solution by the $P_3$ Hu--Zhang finite element method on the fourth-level mesh is taken as the reference solution.}
	\label{exam3:fig:cookuyres}
\end{figure}
\begin{figure}[htbp]
    \centering
    \subfloat[$u_x$]{\includegraphics[trim=75 20 60 10, clip, width=0.25\linewidth]{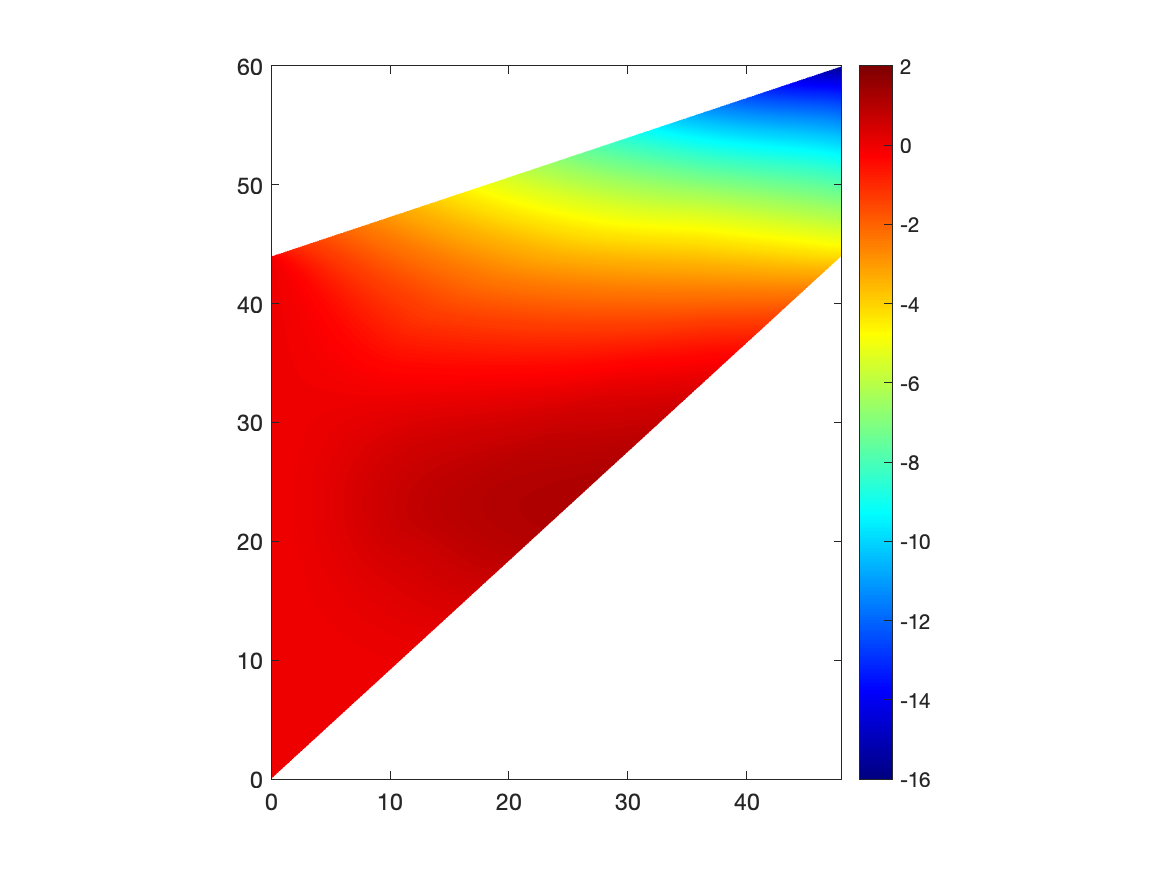}}
    \subfloat[$u_y$]{\includegraphics[trim=75 20 60 10, clip, width=0.25\linewidth]{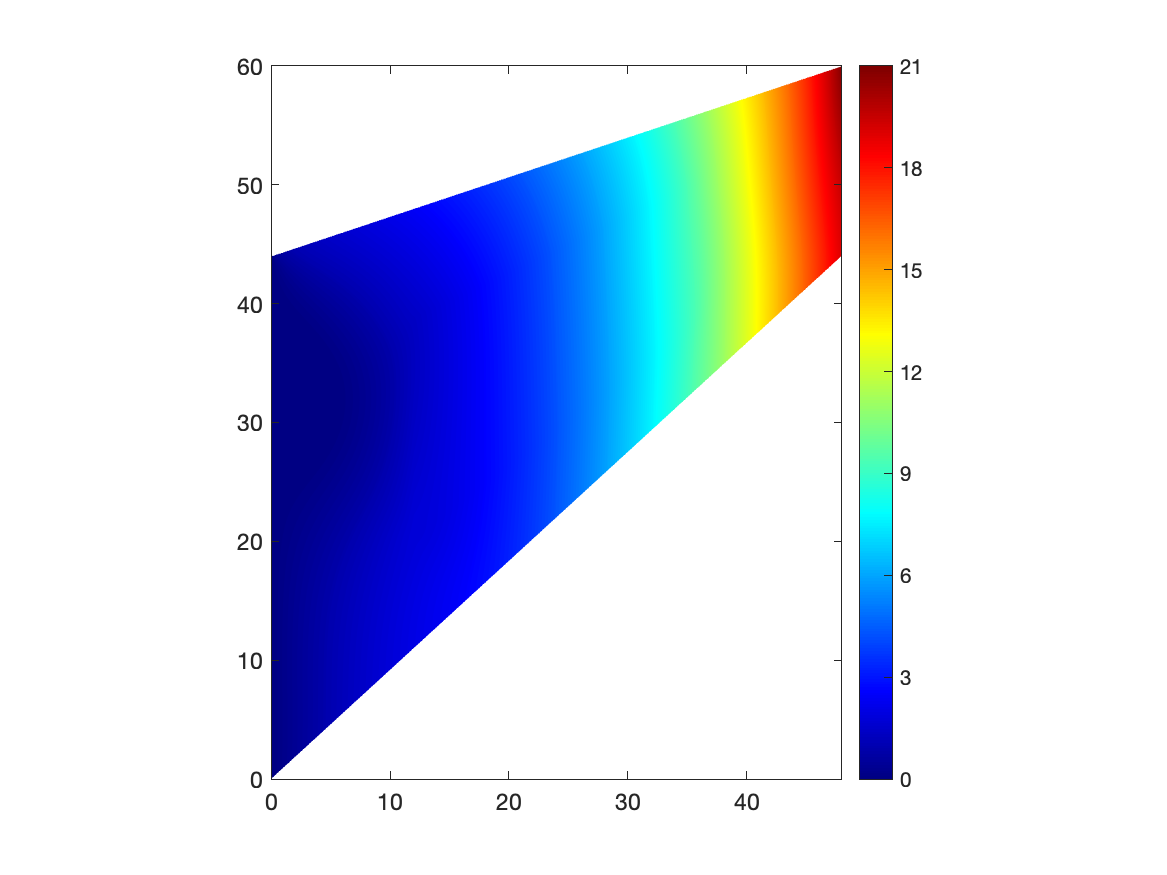}}
    
    \subfloat[$\sigma_{xx}$]{\includegraphics[trim=75 20 60 10, clip, width=0.25\linewidth]{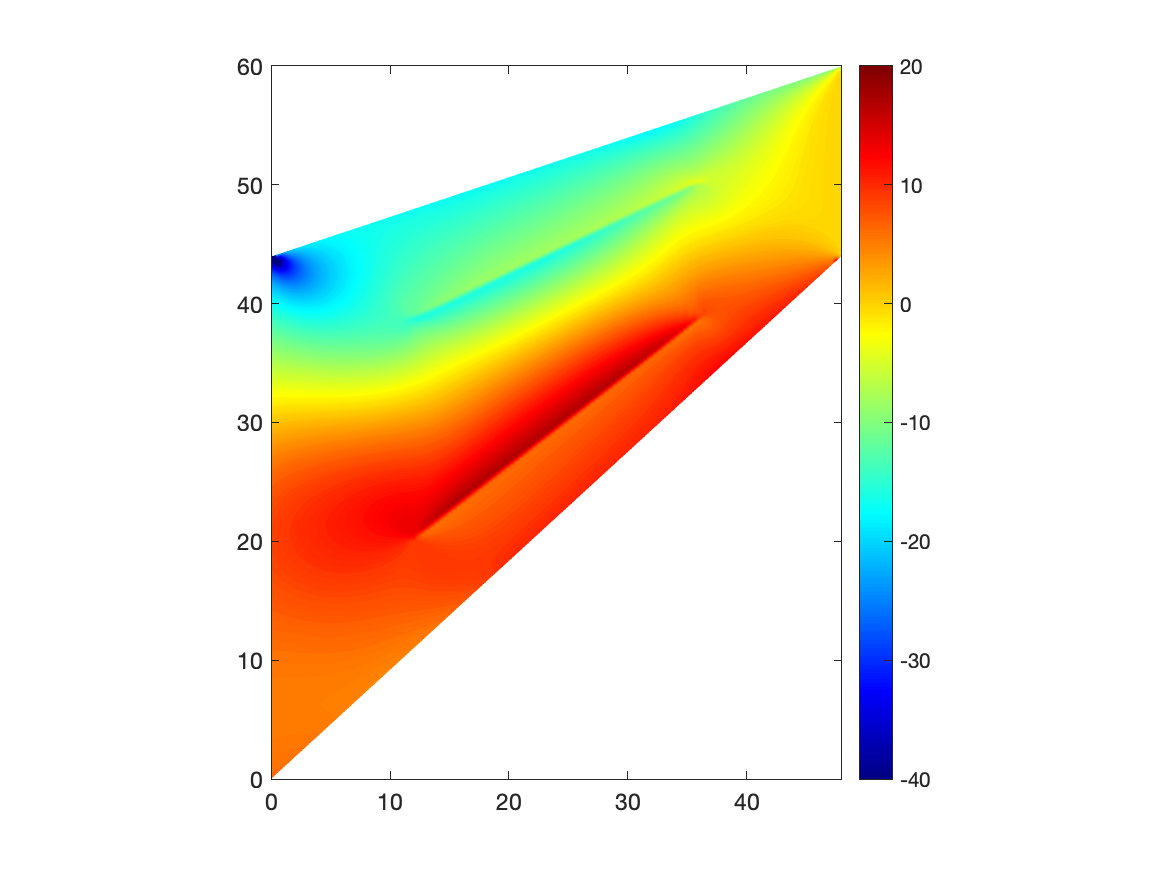}}
     \subfloat[$\sigma_{xy}$]{\includegraphics[trim=75 20 60 10, clip, width=0.25\linewidth]{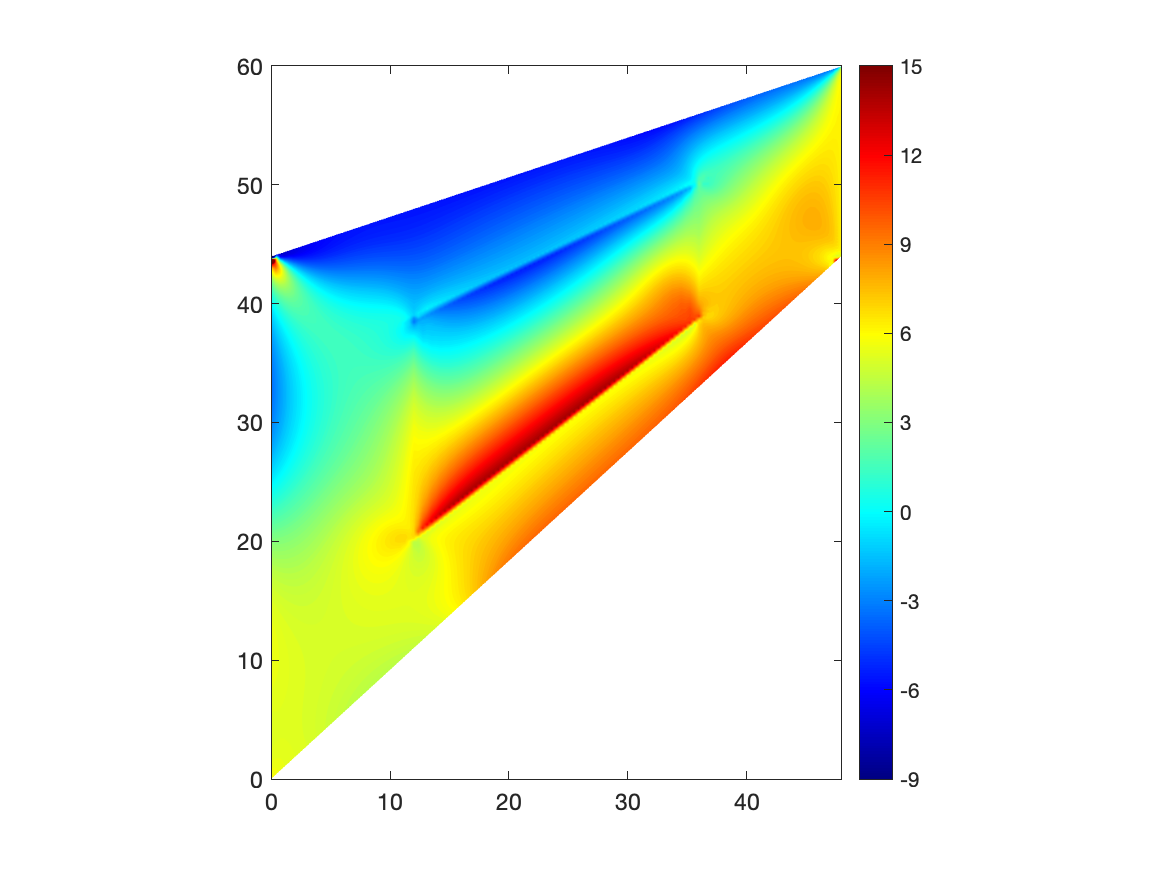}}
    \subfloat[$\sigma_{yy}$]{\includegraphics[trim=75 20 60 10, clip, width=0.25\linewidth]{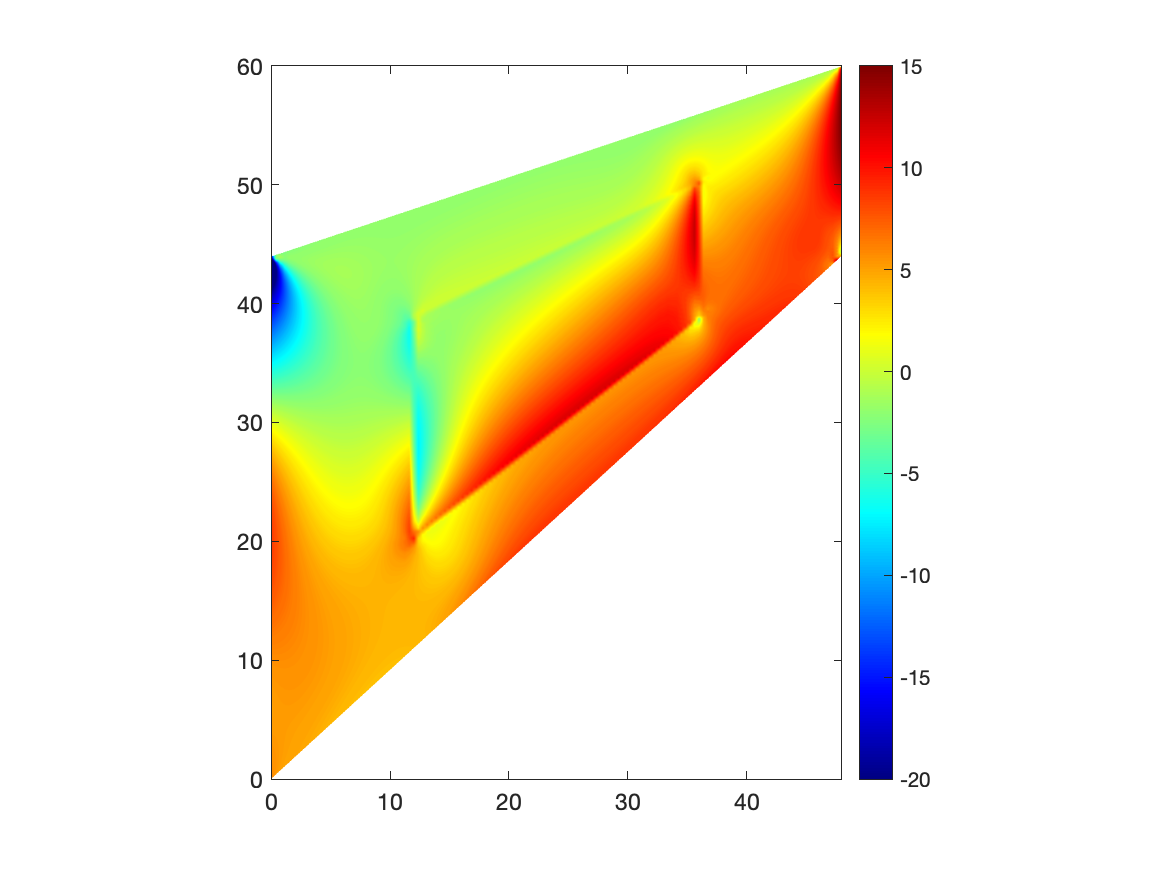}}
    \caption{Results of the coupling $\HZ_3(\Omt)+\CG_2(\Omo)$ method for bimaterial Cook's membrane problem on the fourth-level refined mesh.}
    % \caption{Results of the $P_3$ Hu--Zhang and $P_4$ Lagrange hybrid finite element method for bimaterial Cook's membrane problem in the nearly incompressible case of $\nu=0.4999$ on the fifth level of mesh refinement.}
    \label{exam3:fig:cookres}
\end{figure}

%%%%%%%%%%%%%%%%%%%%%%%%%%%%%%%%%%%%%%%%%%%%%%%%%%%%%%%%%%%%%%%%%%%%%%%%%%%%%%%%%%%%%%%%%%%%%%%%%%%%%%%%%%%%%%%%%%%%%%%%%%%%%%%%%%%%%%%%%%%%%%%%%%%%%%%%%%%%%%%%%%%%%%%%%%%%%%%%%%%%%%%%%%%%%%%%%%%%%%%%%%%%%%%%%%%%%%%%%%%%%%%%%%%%%%%%%%%
\subsection{Kirsch’s plate problem}
Consider a standard benchmark problem consisting of a plate with a central circular hole subjected to a uniform horizontal traction of magnitude $T_x$. By exploiting the symmetry of the geometry and loading, the problem is modeled on a truncated quarter plate. The exact solution, evaluated at the boundary of the finite quarter plate, is given here for reference \cite{hughes2005isogeometric}:
\begin{align*}
	\sigma_{rr}(r,\theta) &= \frac{T_x}{2}\left(1-\frac{R^2}{r^2}\right)+\frac{T_x}{2}\left(1-4\frac{R^2}{r^2}+3\frac{R^4}{r^4}\right)\cos 2\theta,\\
	\sigma_{\theta\theta}(r,\theta) &= \frac{T_x}{2}\left(1+\frac{R^2}{r^2}\right)-\frac{T_x}{2}\left(1+3\frac{R^4}{r^4}\right)\cos 2\theta,\\
	\sigma_{r\theta}(r,\theta)&=-\frac{T_x}{2}\left(1+2\frac{R^2}{r^2}-3\frac{R^4}{r^4}\right)\sin 2\theta.
\end{align*}
Specifically, the body force is $f=(0,0)^T$. Symmetric boundary conditions $u\cdot n = 0$ and $(\sigma n)\cdot \tau = 0$ are imposed on the left and bottom boundaries. A homogeneous Neumann condition $\sigma n = 0$ is prescribed on the circular boundary, while exact Neumann data are imposed on the top and right boundaries.
The detailed setup is illustrated in Figure.~\ref{Fig:Kirch}. In this experiment, a uniaxial tensile load $T_x = 10$ is applied, yielding a stress concentration $\sigma_{xx} = 30$ at point $(r,\theta) = (R, \pi/2)$, which is the concentration set in this case. Since the domain $\Omega$ has a curved boundary, we use isoparametric Lagrange elements~\cite{bernner2008the} and curved Hu--Zhang elements~\cite{chen2025huzhangelementlinearelasticity}, which are still denoted by $P_m$  Lagrange element, $P_k$ Hu--Zhang element, and the coupling method $\HZ_k(\omega_i^-) + \CG_m(\Omega \setminus \omega_i^-)$ for simplicity.

\begin{figure}[htbp]
	\centering
	\begin{tikzpicture}[scale=1.35,
		dashedline/.style={dashed, thick},
		]
		% 定义正六边形的六个顶点
		\coordinate (O) at (0,0);
		\coordinate (A) at (1,0);
		\coordinate (B) at (4,0);
		\coordinate (C) at (4,4);
		\coordinate (D) at (0,4);
		\coordinate (E) at (0,1);
		
		\draw[thick] (A) -- (B) -- (C) -- (D) -- (E) ;
		\draw[thick] (A) arc (0:90:1);
		\draw [dashed,thick] (O)--(A);
		\draw [dashed,thick] (O)--(E);
		\draw [->,thick] (O)--(45:1); 
		
		\draw[thick] (-0.4,0) -- (-0.1,0);  
		\draw[thick] (-0.4,4) -- (-0.1,4); 
		\draw [<->,thick] (-0.25,0)--(-0.25,4); 
		
		\draw[thick] (0,-0.4) -- (0,-0.1);  
		\draw[thick] (4,-0.4) -- (4,-0.1); % 底部直线
		\draw [<->,thick] (0,-0.25)--(4,-0.25); 
		
		\node[anchor=north west] at (1.2, 3.2) {Material properties:};
		\node[anchor=north west] at (1.2, 2.8) {$R = 1$};
		\node[anchor=north west] at (1.2, 2.5) {$L = 4$};
		\node[anchor=north west] at (1.2, 2.2) {$E = 10^5$};
		\node[anchor=north west] at (1.2, 1.9) {$\nu = 0.3$};
		
		\node [left] at (-0.4,2) {L};
		\node [below] at (2,-0.4) {L};
		
		\node[right]  at (0.7,0.74){$\sigma n = 0$};
		\node [above left] at (0.45,0.4) {$R$};
		\node [rotate = 90, right] at (0.2,1.8){Symmetry};
		\node [above] at (2.5,0){Symmetry};
		\node [below] at (2,4) {Exact traction};
		\node [rotate = -90, left] at (3.8,1.5){Exact traction};
		
		\draw [->,thick] (4.2,2.2) -- (4.8,2.2);
		\draw [->,thick] (4.2,2) -- (4.8,2);
		\draw [->,thick] (4.2,1.8) -- (4.8,1.8);
		\node [above] at (4.5,2.2){$T_x$};
	\end{tikzpicture}
	\caption{Kirsch's plate problem.}
	\label{Fig:Kirch}
\end{figure}
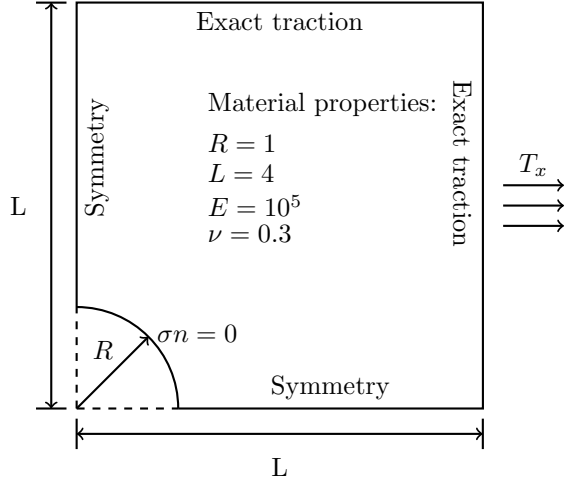

Figure~\ref{fig:Krichsigmaxx} compares the stress distribution by the $P_1$ Lagrange element and the $\HZ_3(\omega_1^-)+\CG_1(\Omega\setminus\omega_1^-)$ method on the initial mesh, which indicates a more accurate numerical behavior of the coupling method. The numerical values of $\sigma_{xx}$ at the stress concentration point by the $P_m$ Lagrange element, the $\HZ_3(\omega_i^-)+\CG_1(\Omega\setminus\omega_i^-)$, and the $\HZ_3(\omega_1^-)+\CG_m(\Omega\setminus\omega_1^-)$ methods with $1\le i\le 5$ and $1\le m\le 3$ are reported in Figure~\ref{fig:KrichsResult}.
The left subfigure shows that the~$\HZ_3(\omega_i^-)+\CG_1(\Omega\setminus\omega_i^-)$ method achieves significantly higher accuracy than the $P_1$ Lagrange element, even when the Hu--Zhang element is employed only on a small region. 
In particular, the solution by the~$\HZ_3(\omega_i^-)+\CG_1(\Omega\setminus\omega_i^-)$ method with $i\ge 2$ closely matches with the reference solution even on the initial mesh. 
The right subfigure indicates that, for a fixed $m$, the~$\HZ_3(\omega_1^-)+\CG_m(\Omega\setminus\omega_1^-)$ method consistently outperforms the $P_m$ Lagrange element. This advantage is especially pronounced for smaller values of $m$.
	
\begin{figure}[htbp!]
	\centering
	\subfloat[$\sigma_{xx}$]{\includegraphics[trim=20 20 20 20, clip, width=0.48\linewidth]{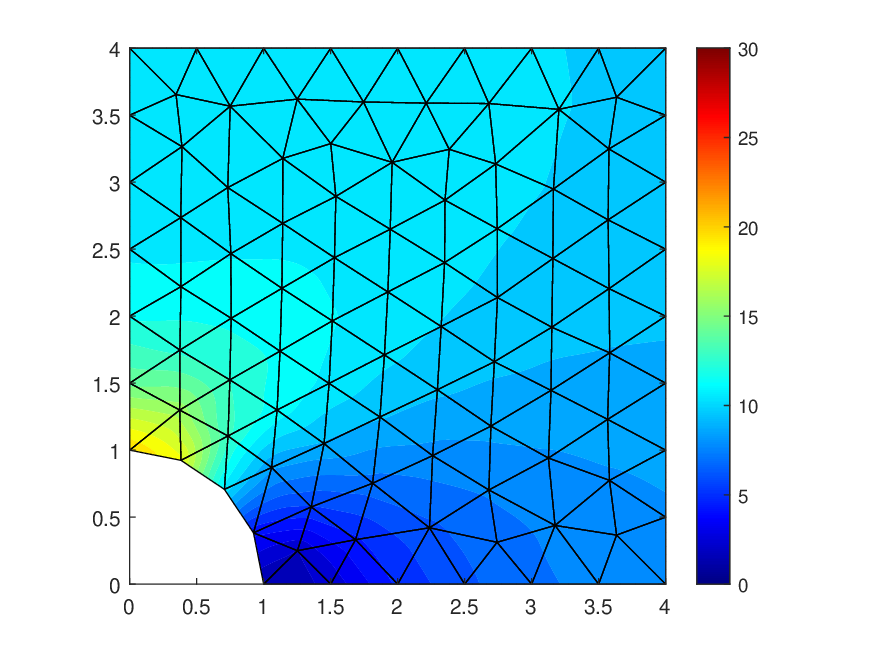}}
	\subfloat[$\sigma_{xx}$]{\includegraphics[trim=20 20 20 20, clip, width=0.48\linewidth]{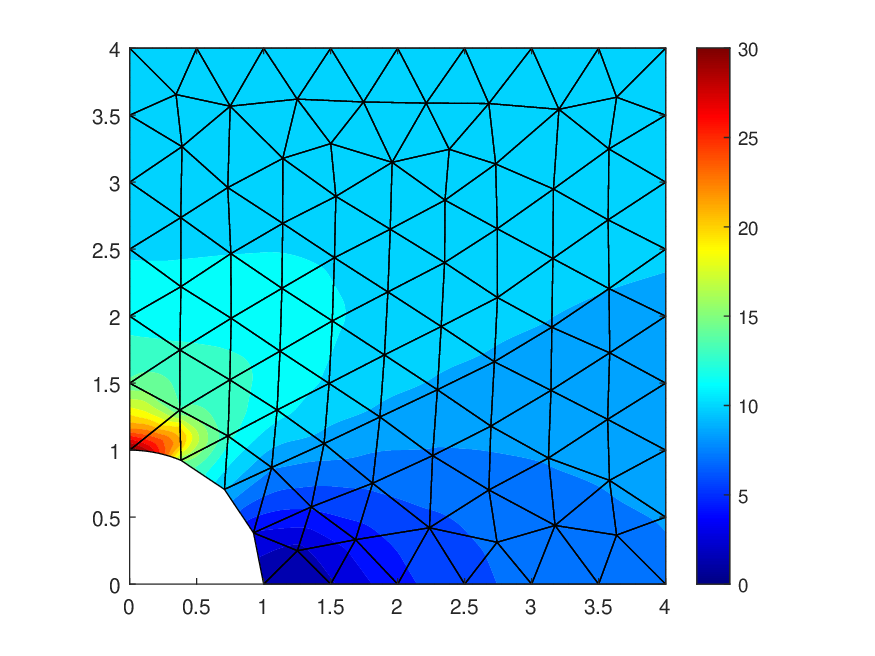}}
	\caption{Stress distributions for the $P_1$ Lagrange element (left) and the coupling  method $\HZ_3(\omega_1^-)+\CG_1(\Omega\setminus\omega_1^-)$ (right) on the initial mesh.}
	\label{fig:Krichsigmaxx}
\end{figure}
\begin{figure}[htbp!]
	\centering
	\subfloat{\includegraphics[trim=10 0 30 0, clip, width=0.45\linewidth]{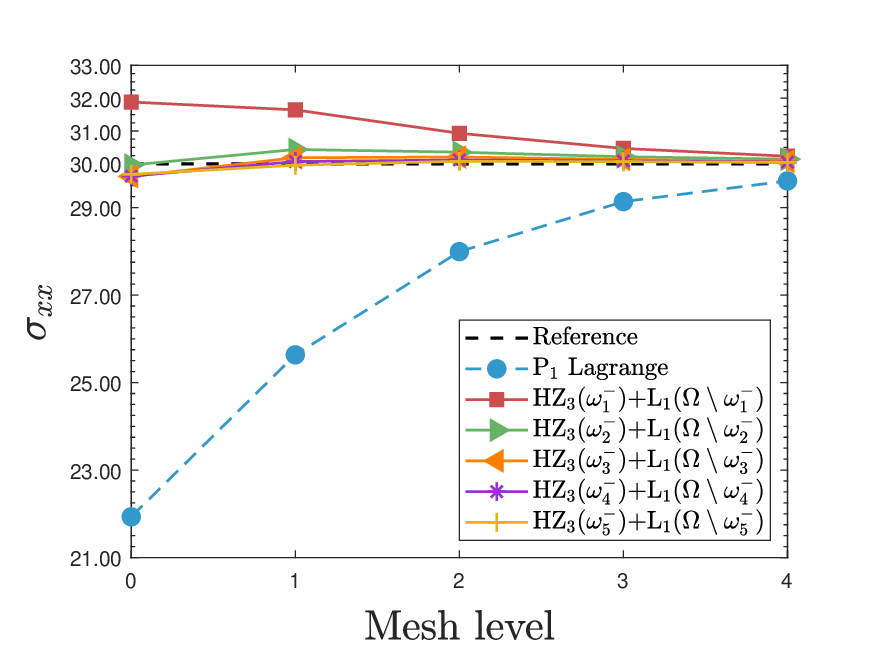}}
	\hspace{2.0em}
	\subfloat{\includegraphics[trim=10 0 30 0, clip, width=0.45\linewidth]{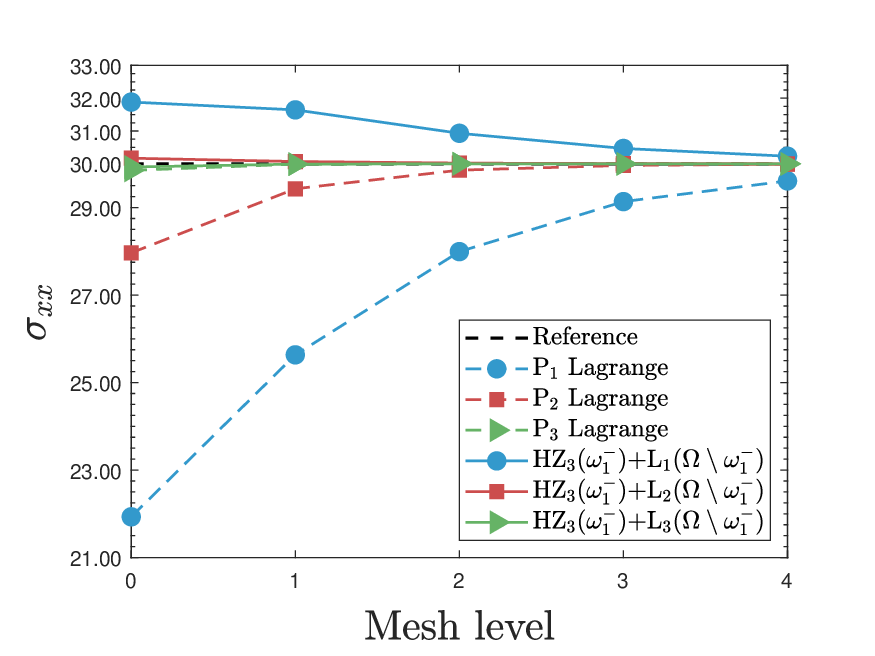}}
	\caption{Numerical comparison of $\sigma_{xx}$ at the stress concentration point for various methods.}
	\label{fig:KrichsResult}
\end{figure} 
\subsection{The Girkmann problem}
\label{girk-mix} 

The Girkmann problem is a classical benchmark in structural mechanics that describes an axisymmetric thin shell supported by stiffening rings, whose cross-sectional geometry is illustrated in Figure~\ref{chap3:fig:1}.
Let $\Omega =  \Oms \cup \Omr$ denote the shell–ring system with interface $\Gamma_\alpha = \partial\Oms\cap\partial\Omr$, and $\Gamma_{AB}$ be the bottom segment of the ring with dimensions $a = 0.60$~m and $b = 0.50$~m. The material parameters are $E = 20.59$~GPa and $\nu = 0$. The radius of the middle surface of the shell is $R_m = \frac{R_c}{\sin\alpha}\approx 23.34$ m with meridional angle $\alpha = 40^\circ$, and the radius of the shell $R_c = 15$ m. The thickness of the shell is $h = 0.06$ m, and the slenderness ratio is $t = \frac{h}{R_m} \approx 2.57 \times 10^{-3}$. The shell is subjected to a downward gravitational force, while the bottom of the ring experiences an upward pressure:
$$
f=\begin{cases}
(0,-F)^T,& \mbox{on}~\Oms,\\
(0,0)^T,& \mbox{on}~\Omr,
\end{cases}
\qquad
g_n=\begin{cases}
(0,p)^T,&\mbox{on}~\Gamma_{AB},\\
(0,0)^T,&\mbox{on}~\partial\Omega\setminus\Gamma_{AB},
\end{cases}
$$
where $F=32.690$ kN/m and $p=27.256$ kPa. The geometric configuration at the interface $\Gamma_\alpha$ leads to the singular behavior of the elastic solution. This problem, originally introduced in \cite{MR0090249} with a detailed analysis of its classical solution later presented in \cite{MR2927141}, is a standard benchmark for evaluating the performance of finite element methods. The quantities of interest include the shear force $Q$ and the bending moment $M$ at the interface $\Gamma_\alpha$.
For more details, we refer readers to~\cite{Barna2010The} and the references therein.

% The results by both $hp$-version axisymmetric solid-extraction method in~\cite{Antti2012} and axisymmetric-DG-FEM method in~\cite{philippe2013} are $Q=943.65$ N/m and $M=-36.79$ Nm/m.
% For more details, we refer readers to~\cite{Barna2010The} and the references therein.

In this example, the Girkmann problem is modeled using the axisymmetric solid model~\cite{Antti2012}. Isoparametric Lagrange finite element~\cite{bernner2008the} and the curved Hu--Zhang finite element~\cite{chen2025huzhangelementlinearelasticity} are employed to handle the curved boundary of $\Oms$. Correspondingly, the coupling method $\HZ_{k-1}(\omega_1^-)+\CG_k(\Omega\setminus\omega_1^-)$ is defined as before, where the stress concentration set is the line segment $\Gamma_\alpha$.
It should be noted that, the stress solution obtained from the Hu--Zhang element in the coupling method is continuous, allowing direct computation of $Q$ and  $M$ by numerical integration. 
In contrast, for the $P_k$ Lagrange finite element method, these quantities are computed separately on each side of the interface $\Gamma_\alpha$ and then averaged.
%: on the shell side (denoted $Q_s$ and $M_s$) and on the ring side (denoted $Q_r$ and $M_r$), by direct numerical integration of the element-wise stresses. The final values are then obtained by averaging the values from both sides, i.e., 
%$$Q=\frac{Q_s+Q_r}{2} \quad \text{and} \quad M=\frac{M_s+M_r}{2}.$$

\begin{figure}[htbp]
\centering
\includegraphics[width=0.65\textwidth]{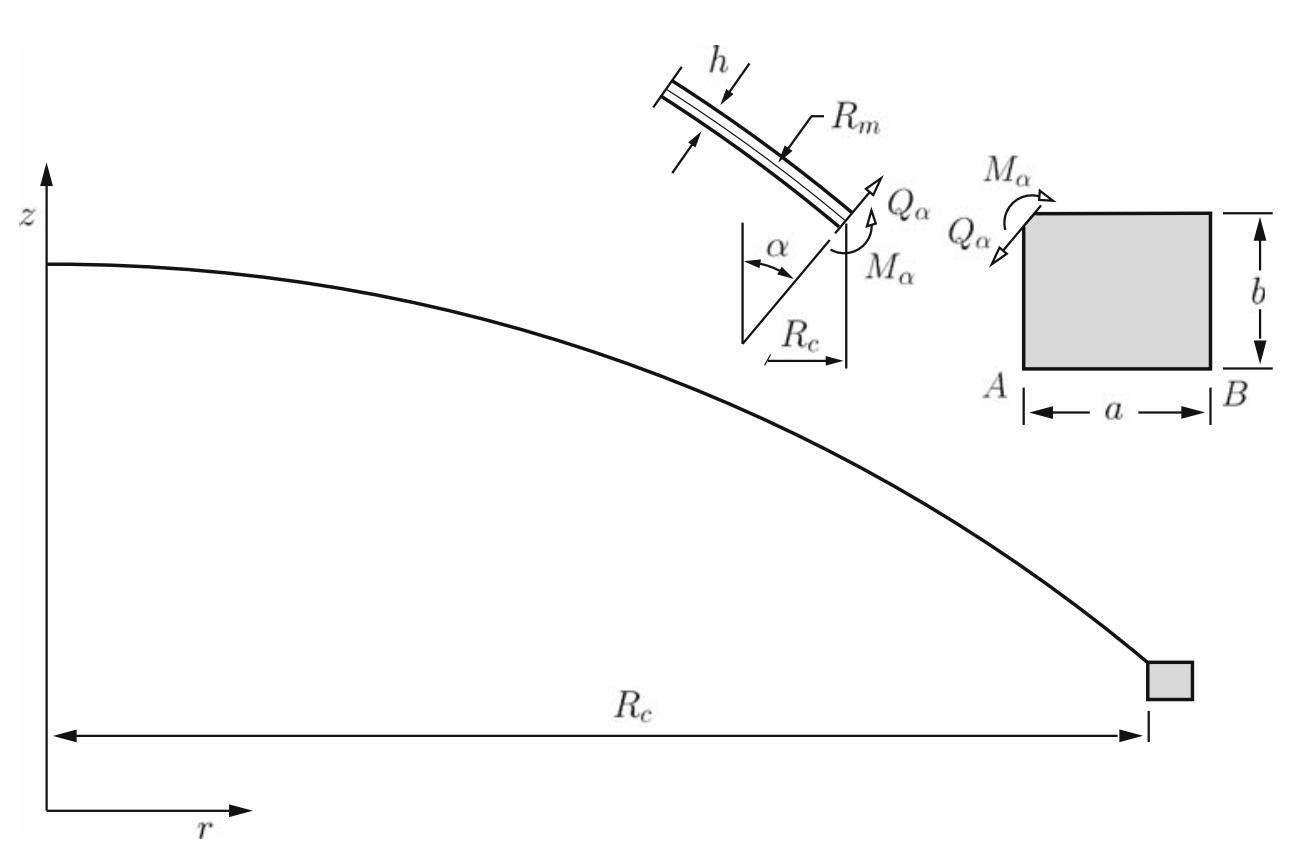}
	\caption{ The cross-sectional diagram of the Girkmann problem model.}
	\label{chap3:fig:1}
\end{figure}

\subsubsection{Meshes with high-aspect-ratio elements}
Consider a triangulation with high-aspect-ratio elements, as depicted in
Figure~\ref{fig:Low_quality_mesh}. This mesh employs a baseline size of $h=10$ m,
with local refinement to $h=0.01$ m at the interface $\Gamma_\alpha$. Consequently,
the first layer of elements on the right-hand side of $\Gamma_\alpha$ exhibits
geometric distortion, with aspect ratios (AR) exceeding $20$.

Compared with the reference values of $Q=943.65$~N/m and $M=-36.79$~Nm/m reported by both the $hp$-version axisymmetric solid-extraction method~\cite{Antti2012} and the axisymmetric-DG-FEM method~\cite{philippe2013}, the results in
Table~\ref{lowqualitymesh} demonstrate that the $\HZ_{k-1}(\omega_1^-)+\CG_k(\Omega\setminus\omega_1^-)$ method provides
stable and convergent approximations for various $k$, whereas the $P_k$ Lagrange element fails to converge even for $k=9$.
Furthermore, the $\HZ_{3}(\omega_1^-)+\CG_4(\Omega\setminus\omega_1^-)$ method with $2203$ Dofs significantly outperforms the $P_9$ Lagrange element with $8246$ Dofs, highlighting
the superior robustness and accuracy of the proposed coupling strategy.

% In this numerical example, we employ mesh size $h=10m$ and local mesh size $h=0.1m$ at the interface $\Gamma_\alpha$, see Figure \ref{fig:Low_quality_mesh}. The first right layer of mesh adjacent to the interface $\Gamma_\alpha$ exhibits severe distortion, with element aspect ratios exceeding $20:1$.
% We first use $P_k$ Lagrange element to compute $M$ and $Q$.
% Next, we replace the Lagrange elements in the first layers adjacent to both sides of $\Gamma_\alpha$ with Hu--Zhang elements and computed the values of $M$ and $Q$ based on the results from the Hu--Zhang formulation.
% As shown in Table \ref{lowqualitymesh}, the computational results indicate that the accuracy of the Lagrange elements is severely impacted by the low-quality mesh. However, the Hu--Zhang elements, due to their direct stress computation capability, maintain robust performance and deliver reliable results even under such mesh distortion conditions.

% \begin{figure}
% \centering
% \begin{minipage}{0.25\textwidth}
%     \centering
%     \includegraphics[width=1.0\linewidth]{low_quality_mesh.eps}
% \end{minipage}
% \hfill
% \begin{minipage}{0.25\textwidth}
%     \centering
%     \includegraphics[width=1.0\linewidth]{low_quality_mesh_fangda.eps}
% \end{minipage}
% % \caption{The low quality mesh and it's zoom at junction.}
% \caption{Baseline mesh with high-aspect-ratio elements (AR $\geq$ 20) and Zoomed view of distorted quadrilateral elements at shell-ring junction $\Gamma_\alpha$.}
% \label{fig:Low_quality_mesh}
% \end{figure}
\begin{figure}[htbp]
    \centering
    \subfloat{\includegraphics[trim=50 50 50 50, clip, width=0.25\linewidth]{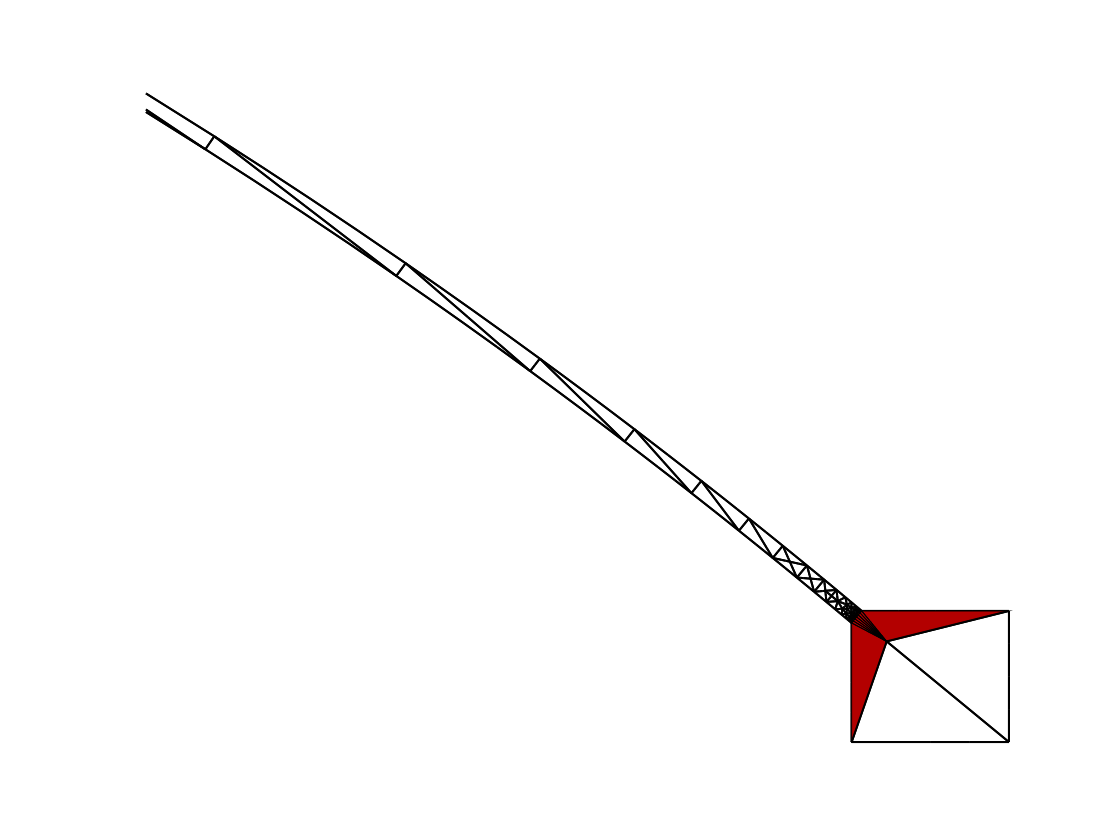}}
    \hspace{2.0em}
    \subfloat{\includegraphics[trim=50 50 50 50, clip, width=0.25\linewidth]{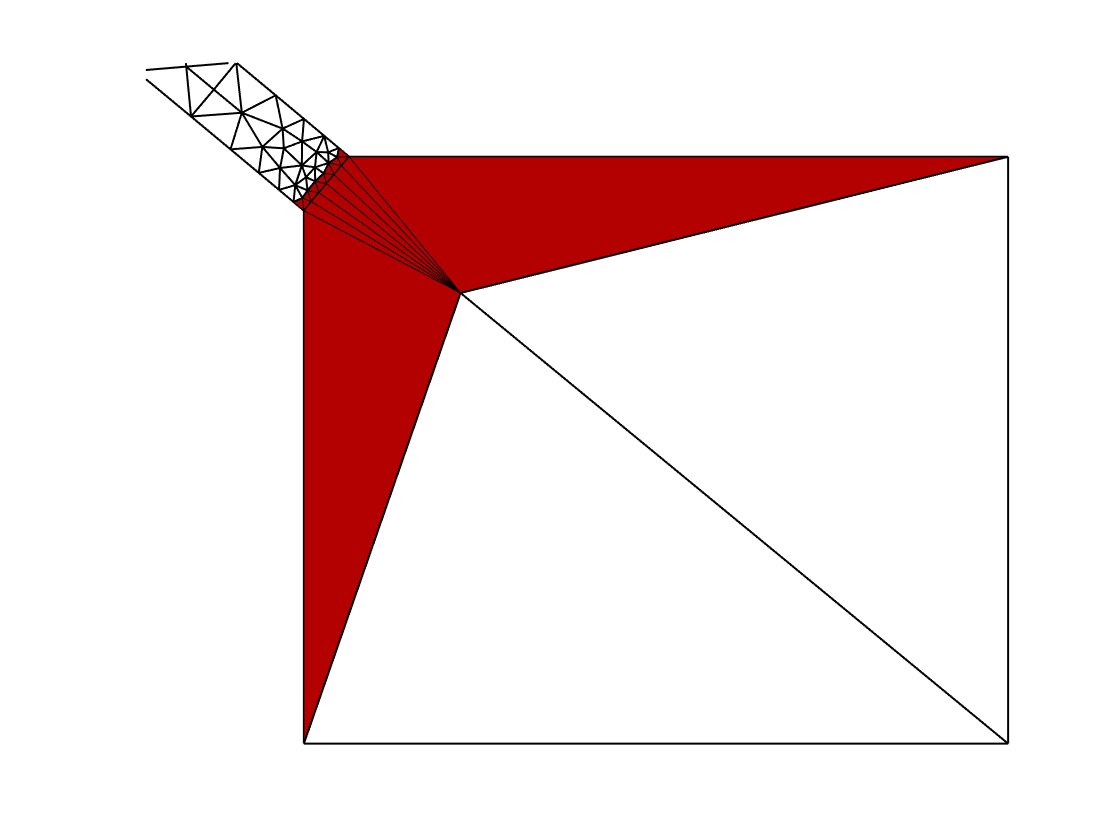}}
    \caption{Baseline mesh with high-aspect-ratio elements (AR $\geq$ 20) and zoomed view of distorted triangle elements at shell-ring junction $\Gamma_\alpha$.}
    \label{fig:Low_quality_mesh}
\end{figure}

\begin{table}[htp!]
\centering
\caption{
% \centering
Numerical results of the $P_k$ Lagrange element and the coupling $\HZ_{k-1}(\omega_1^-)+\CG_k(\Omega\setminus\omega_1^-)$ method on mesh with high-aspect-ratio elements.
}
\label{lowqualitymesh}
\begin{tabular}{c@{\hspace{0.5cm}}|c@{\hspace{0.5cm}}c@{\hspace{0.5cm}}c@{\hspace{0.5cm}}|c@{\hspace{0.5cm}}c@{\hspace{0.5cm}}c}
\hline
\hline
\multirow{2}{*}{$k$}&\multicolumn{3}{c}{$P_k$ Lagrange}&\multicolumn{3}{|c}{$\HZ_{k-1}(\omega_1^-)+\CG_{k}(\Omega\setminus\omega_1^-)$}\\
& Dofs &Q(N/m)& M(Nm/m)&Dofs& Q(N/m)&M(Nm/m) \\
\hline
4 & 1726 & 583.070 & -24.544 & 2203 & 943.478 & -36.172 \\
5 & 2642 & 703.836 & -26.741 & 3452 & 943.560 & -36.521 \\
6 & 3752 & 775.443 & -28.280 & 4979 & 943.597 & -36.641 \\
7 & 5056 & 814.391 & -29.464 & 6784 & 943.611 & -36.702 \\
8 & 6554 & 837.121 & -30.413 & 8867 & 943.620 & -36.739 \\
9 & 8246 & 852.768 & -31.180 & 11228 & 943.627 & -36.762 \\
\hline
\hline
  \end{tabular}
\end{table}

\subsubsection{Adaptive meshes}
Consider the~$P_4$ Lagrange element and the $\HZ_3(\omega_1^-)+\CG_4(\Omega\setminus\omega_1^-)$ method on adaptive meshes, where an initial mesh of size $h=1$~m is locally refined to $h=0.1$~m near the interface~$\Gamma_\alpha$; see Figure~\ref{fig:adaptive_mesh}.
Following~\cite{VERFURTH1999419}, adaptive meshes are generated by applying a recovery-type error estimator for the solution~$u_h$ by the $P_4$ Lagrange element  
$$
\eta_K=
    \Big\|\varepsilon(u_h)-\frac{G(u_h) +G(u_h)^T}{2}\Big\|_{0,K},~ \mbox{ for all }\,K\in\mathcal{T}_h,
$$
where $G$ is the gradient recovery operator defined in~\cite[Section 2]{Zhang2005}.

\begin{figure}[htbp]
    \centering
    % \subfloat{\includegraphics[trim=70 20 10 20, clip, width=0.25\linewidth]{adaptive_mesh_0.eps}}
    % \subfloat{\includegraphics[trim=70 20 10 20, clip, width=0.25\linewidth]{adaptive_mesh_0_fangda.eps}}
    \subfloat{\includegraphics[trim=10 20 10 20, clip, width=0.30\linewidth]{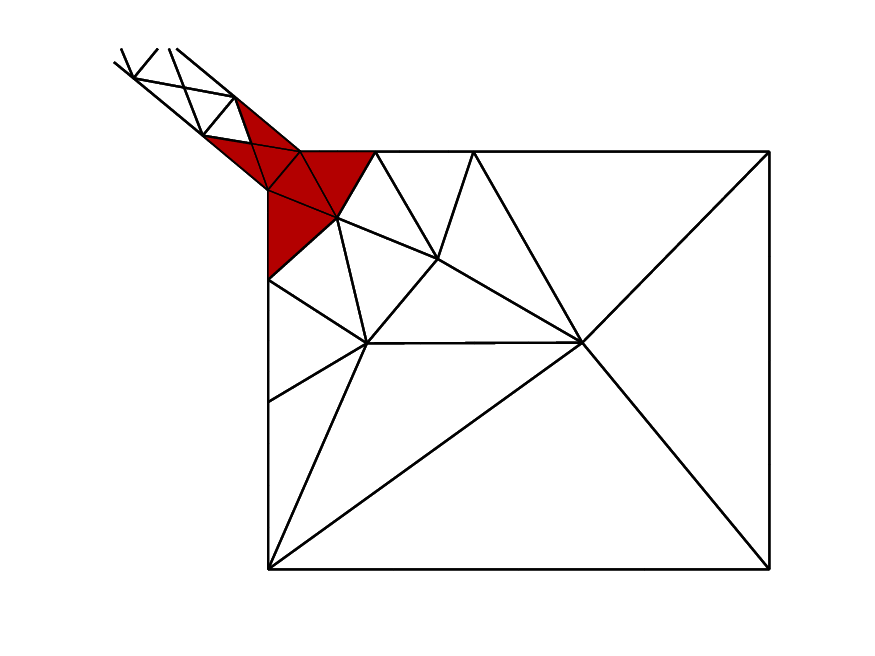}}
    \hspace{2.0em}
    \subfloat{\includegraphics[trim=10 20 10 20, clip, width=0.30\linewidth]{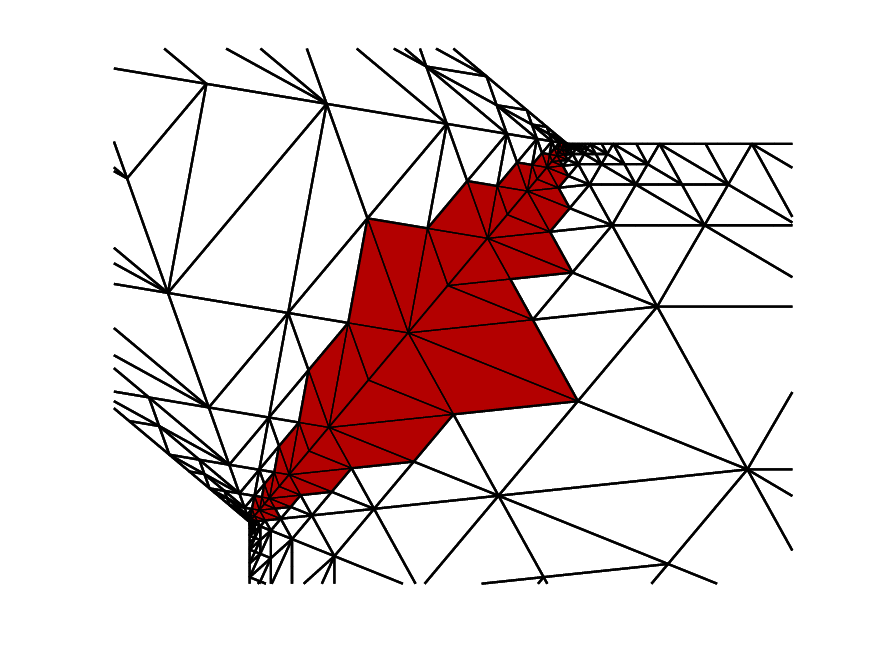}}
    \caption{The zoomed view at the junction of the initial and final refined mesh.}
    \label{fig:adaptive_mesh}
\end{figure}

Table~\ref{adaptivemeshresult} records the number of elements (NE) of each adaptive mesh, the total Dofs, shear force $Q$, and bending moment $M$ by the $P_4$ Lagrange element and the $\HZ_3(\omega_1^-)+\CG_4(\Omega\setminus\omega_1^-)$ method. The results indicate that the coupling method is stable and  convergent even on coarse meshes, whereas the $P_4$ Lagrange finite element method fails to fully converge even on the finest mesh. Remarkably, the $\HZ_3(\omega_1^-)+\CG_4(\Omega\setminus\omega_1^-)$ method achieves comparable accuracy on the initial mesh with $2344$ Dofs to that of the $P_4$ Lagrange finite element method on the final mesh with $19626$ Dofs.

Direct numerical integration results from an $hp$-Lagrange finite element method
are reported in~\cite[Table 1]{Antti2012}. Due to stress discontinuity, the computed $Q$ and $M$ differ on the shell and ring sides even with $40558$ Dofs.
Post-processing via extraction formulas~\cite[(11)--(12)]{Antti2012} significantly improves the results as shown in~\cite[Table 2]{Antti2012}.
In contrast, the continuity of the normal stress in the Hu--Zhang discretization enables direct computation of $Q$ and $M$ without post-processing, while achieving higher accuracy.

\begin{table}[htp!]
\centering
\caption{
% \centering
Numerical results of the $P_4$ Lagrange element and the coupling $\HZ_3(\omega_1^-)+\CG_4(\Omega\setminus\omega_1^-)$ method on adaptive meshes.
%ref Q=943.7(N/m), ref M=36.79(N/m).
% h = 10m, hcor = 0.01m
}
\label{adaptivemeshresult}
\begin{tabular}{c@{\hspace{0.5cm}}|c@{\hspace{0.5cm}}c@{\hspace{0.5cm}}c@{\hspace{0.5cm}}|c@{\hspace{0.5cm}}c@{\hspace{0.5cm}}c}
\hline
\hline
\multirow{2}{*}{NE}&\multicolumn{3}{c}{$P_4$ Lagrange}&\multicolumn{3}{|c}{$\HZ_3(\omega_1^-)+\CG_4(\Omega\setminus\omega_1^-)$}\\
& Dofs &Q(N/m)& M(Nm/m)&Dofs& Q(N/m)&M(Nm/m) \\
\hline
113 & 2182 & 709.813 & -22.483 & 2344 & 943.573 & -36.523 \\
%158 & 2906 & 721.285 & -18.035 & 3184 & 943.883 & -36.679 \\
167 & 3078 & 803.956 & -23.782 & 3393 & 943.650 & -36.683 \\
%182 & 3322 & 822.795 & -27.262 & 3715 & 943.690 & -36.758 \\
194 & 3530 & 840.419 & -27.924 & 4023 & 943.653 & -36.747 \\
%219 & 3934 & 850.922 & -30.363 & 4505 & 943.667 & -36.777 \\
232 & 4162 & 862.982 & -30.824 & 4833 & 943.657 & -36.772 \\
%250 & 4458 & 870.703 & -32.401 & 5207 & 943.662 & -36.784 \\
273 & 4846 & 880.017 & -32.715 & 5695 & 943.658 & -36.782 \\
%312 & 5482 & 885.351 & -33.741 & 6409 & 943.660 & -36.787 \\
326 & 5722 & 885.351 & -33.741 & 6649 & 943.660 & -36.787 \\
%359 & 6278 & 891.910 & -33.954 & 7305 & 943.659 & -36.786 \\
383 & 6678 & 900.146 & -34.668 & 7809 & 943.660 & -36.788 \\
%406 & 7066 & 913.933 & -34.799 & 8360 & 943.659 & -36.788 \\
444 & 7690 & 916.612 & -35.338 & 9140 & 943.660 & -36.789 \\
%472 & 8162 & 925.478 & -35.441 & 9812 & 943.659 & -36.789 \\
517 & 8918 & 930.962 & -35.843 & 10672 & 943.659 & -36.789 \\
%561 & 9646 & 932.379 & -35.893 & 11615 & 943.659 & -36.789 \\
612 & 10490 & 932.812 & -36.112 & 12600 & 943.659 & -36.789 \\
%689 & 11758 & 936.567 & -36.197 & 14083 & 943.659 & -36.789 \\
779 & 13238 & 938.170 & -36.334 & 15678 & 943.659 & -36.789 \\
%873 & 14790 & 937.748 & -36.393 & 17382 & 943.659 & -36.789 \\
960 & 16210 & 939.164 & -36.495 & 18917 & 943.659 & -36.789 \\
%1096 & 18474 & 942.610 & -36.545 & 21385 & 943.659 & -36.789 \\
1166 & 19626 & 943.359 & -36.586 & 22563 & 943.659 & -36.789 \\
\hline
\hline
  \end{tabular}
\end{table}

\subsection{MacNeal–Harder’s slender cantilever beams in 3D}\label{3DExam}
This example involves a benchmark problem testing the slender cantilever beam proposed by MacNeal and Harder \cite{macneal1985a}.
%The computational domain is $\Omega=\left(0,6\right)\times\left(0,0.2\right)\times\left(0,0.1\right)$. Material properties are $E=10^7$ and $\nu=0.3$.
Following \cite{macneal1985a,hu2019nonconforming}, the body force $f=(0,0,0)^T$ is placed on the domain $\Omega=\left(0,6\right)\times\left(0,0.2\right)\times\left(0,0.1\right)$ with material parameters $E=10^7$ and $\nu=0.3$. The homogeneous displacement condition $u=(0,0,0)^T$ is imposed on the left boundary $\Gamma_D=\{(x,y,z)\in\partial\Omega\,:\,x=0\}$,  the nonzero traction condition $\sigma n = (0,0,-50)^T$ on the right boundary, and zero traction on all other  boundary faces.

%\begin{itemize}
%    \setlength\itemsep{-0.00em} % 手动减小间距
%    \item Body force $f=(0,0,0)^T$,
%    \item Displacement boundary condition on the left boundary $\Gamma_D$: $u=(0,0,0)^T$,
%    \item Non-zero traction on the right boundary: $\sigma n=(0,0,-P)^T$ with $P=50$,
%    \item Zero traction on other boundaries: $\sigma n=(0,0,0)^T$.
%\end{itemize}
The first four levels of uniform tetrahedral meshes $\mathcal{T}_i$ $(1\le i\le 4)$ are obtained by partitioning~$\Omega$ into $n_x\times n_y\times n_z$ hexahedrons with $n_x=6,12,24,48$ and $n_y=n_z=1$, and each hexahedron is subdivided into six tetrahedral elements.  The initial tetrahedral mesh is displayed in Figure \ref{Fig:Beam}. According to~\cite{hu2019nonconforming}, the reference normal stress at point $B=(1,0,0.1)$ is $\sigma_{Bx}=15000.0$. In this example, let the stress concentration set be the planar interface $x=1$, and adopt the notation $\HZ_4(\omega_1^-)+\CG_1(\Omega\setminus\omega_1^-)$ as before.

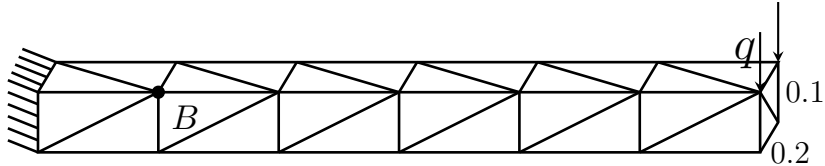
\begin{figure}[htbp]
\centering
\begin{tikzpicture}[scale=0.8, line width=1pt]
\centering
\def\len{12};
\def\he{1};
\def\a{0.3};
\def\b{0.5};
\draw (0,0) rectangle (\len,\he);
\foreach \x in {2,4,...,10} {
    \draw (\x,0) -- (\x,\he);
}

\coordinate (A) at (0+\a,\he+\b);
\coordinate (B) at (0,\he);
\coordinate (C) at (0,0);
\coordinate (D) at (\len+\a,\he+\b);
\coordinate (E) at (\len,\he);
\coordinate (F) at (\len,0);
\coordinate (G) at (\len+\a,0+\b);
\draw (A) -- (D) -- (E);
\draw (D) -- (G) -- (F);
\draw (E) -- (G);
\foreach \x in {0,2,...,10} {
    \draw (\x+\a,\he+\b) -- (\x,\he);
    \draw (\x+\a,\he+\b) -- (\x+2,\he);
    \draw (\x,0) -- (\x+2,\he);
}
\filldraw (2,1) circle (2.5pt) node[scale=1.3] [below right] {$B$};

\node at (\len+0.5,0) {\large$0.2$};
\node at (\len+0.75,\he) {\large$0.1$};

\draw[->, >=stealth, thick] (\len,\he+1) -- ++(0,-1);
\draw[->, >=stealth, thick] (\len+\a,\he+\b+1) -- ++(0,-1);
\node[scale=1.5] at (\len-0.25,\he+0.65) {\large$q$};

\foreach \i in {0,0.2,...,1} {
    \draw (-0.5,0.2+\i) -- (0,\i);
}

\draw (-0.26,1.72) -- (0.3,1.5);
\draw (-0.32,1.59) -- (0.2250,1.375);
\draw (-0.38,1.46) -- (0.15,1.25);
\draw (-0.44,1.33) -- (0.075,1.1125);

\end{tikzpicture}
\caption{The initial tetrahedral mesh for the cantilever beam with $n_x=6$, $n_y=n_z=1$.}
\label{Fig:Beam}
\end{figure}

% This example computes the tip deflection $u_A$ in the direction of $(0,0,-1)^T$ at point $A=(6,0.1,0.05)^T$, the energy $\Pi$ and the normal stress $\sigma_{Bx}$ at point $B=(1,0,0.1)^T$. The reference values $u_A=0.4321$ and $\Pi=-0.2159$ are given in \cite{macneal1985a}. In this paper, the references $u_A=0.4310$, $\Pi=-0.2155$, and $\sigma_{Bx}=15000.0$ are computed using the $P_5$ Lagrange element on the fourth level mesh, consistent with \cite{hu2019nonconforming}.

% Table \ref{canBeams} presents results obtained with the $P_1$ Lagrange element, the $P_4$ Hu--Zhang element, and the coupling $P_4$ Hu--Zhang and $P_1$ Lagrange element. In the coupling method, the $P_4$ Hu--Zhang element is implemented within the subdomain $\Omt=\{(x,y,z)^T\in\mathbb{R}^3: 1-h_x<x<1+h_x\}$ with $h_x=\frac{6}{n_x}$. Compared with the $P_1$ Lagrange element, the coupling method achieves significant improvement in $\sigma_{Bx}$ accuracy while enhancing in $u_{A}$ and $\Pi$ accuracies, despite adding only 2000 Dofs.
% Although the $P_4$ Hu--Zhang element achieves higher accuracy for both $u_A$ and $\Pi$, its Dofs substantially exceed those of the coupling method. Notably, on the last two mesh levels, the coupling method uses only 10\% of the Dofs required by the $P_4$ Hu--Zhang element while maintaining a relative error in $\sigma_{Bx}$ below 1\%.

Table~\ref{canBeams} records the numerical values of $\sigma_{Bx}$ by the $P_1$ Lagrange element, the $P_4$ Hu--Zhang element, and the $\HZ_4(\omega_1^-)+\CG_1(\Omega\setminus\omega_1^-)$ method, where $\mathcal{T}_i$ with $i\ge 5$ is generated by uniform refinement of $\mathcal{T}_{i-1}$.
On $\mathcal{T}_3$–$\mathcal{T}_4$, both the $P_4$ Hu--Zhang and coupling methods achieve errors below $1\%$, while the coupling method uses only about $10\%$ of the DoFs. It also outperforms the $P_1$ Lagrange method at comparable DoFs and remains more accurate even when the latter uses $667{,}590$ DoFs versus $2{,}085$ DoFs.

\begin{table}[htbp]
		\centering \caption{Numerical results of the $P_1$ Lagrange element , the $P_4$ Hu--Zhang element and the  $\HZ_4(\omega_1^-)+\CG_1(\Omega\setminus\omega_1^-)$ method for Cantilever beams.}\label{canBeams}
	\begin{tabular}{c|ccccc}
		\hline
		\multicolumn{1}{c|}{Mesh}                                                              & \multicolumn{1}{c}{} & \multicolumn{1}{c}{$\mathcal{T}_1$} & \multicolumn{1}{c}{$\mathcal{T}_2$} & \multicolumn{1}{c}{$\mathcal{T}_3$} & \multicolumn{1}{c}{$\mathcal{T}_4$} \\ \hline
		\multirow{2}{*}{\begin{tabular}[c]{@{}c@{}}$P_1$ Lagrange\end{tabular}} & Dofs                 & 96                                  & 168                   & 312                   & 600                   \\
		& $\bm{\sigma_{Bx}}$   & 19.3                                & 83.2                  & 276.0                 & 594.8                 \\ \hline
		\multirow{2}{*}{$P_4$ Hu--Zhang}                                                              & Dofs                 & 5682                                & 11262                 & 22422                 & 44742                 \\
		& $\bm{\sigma_{Bx}}$   & 14826.1                             & 14985.4               & 14999.2               & 15000.3               \\ \hline
		\multirow{2}{*}{$\HZ_4(\omega_1^-)+\CG_1(\Omega\setminus\omega_1^-)$}                                                                & Dofs                 & 2085                                & 2181                  & 2325                  & 2613                  \\
		& $\bm{\sigma_{Bx}}$   & 14833.4                             & 14765.4               & 15031.0               & 15052.6               \\ \hline
		\multicolumn{1}{c|}{Mesh}                                                             & \multicolumn{1}{c}{} & \multicolumn{1}{c}{$\mathcal{T}_5$}               & \multicolumn{1}{c}{$\mathcal{T}_6$} & \multicolumn{1}{c}{$\mathcal{T}_7$} & \multicolumn{1}{c}{$\mathcal{T}_8$} \\ \hline
		\multirow{2}{*}{\begin{tabular}[c]{@{}c@{}}$P_1$ Lagrange\end{tabular}}  & Dofs                 & 2646                                & 14550                 & 93798                 & 667590                \\
		& $\bm{\sigma_{Bx}}$   & 3656.6                              & 8702.2                & 12415.9               & 14023.1               \\ \hline
	\end{tabular}
\end{table}

\bibliographystyle{siamplain}
\bibliography{ref}

\end{document}